 \definecolor{red}{rgb}{1,0,0}
 \definecolor{darkblue}{rgb}{0,0,0.7}
 \definecolor{white}{rgb}{1,1,1}
 \definecolor{ocra}{rgb}{1, 0.7, 0}
 \definecolor{pink}{rgb}{1, 0, 1}
 \definecolor{orange}{rgb}{1, 0.5, 0}
 \definecolor{purple}{cmyk}{0.4, 1, 0, 0,}
\newcommand{\ubar}[1]{\underaccent{\bar}{#1}}
\newtheorem{theoremA}{Theorem}
\theoremstyle{plain}
\newtheorem{theorem}{Theorem}[section]
\newtheorem{corollary}{Corollary}[theorem]
\newtheorem{lemma}[theorem]{Lemma} 
\newtheorem{prop}[theorem]{Proposition}
\theoremstyle{definition}
\newtheorem{definition}[theorem]{Definition}
\theoremstyle{remark}
\newtheorem{remark}[theorem]{Remark} 
\theoremstyle{remark}
\newtheorem{example}[theorem]{Example}
\numberwithin{equation}{section}
\newcommand{\mysetminusD}{\hbox{\tikz{\draw[line width=0.6pt,line cap=round] (3pt,0) -- (0,6pt);}}}
\newcommand{\mysetminusT}{\mysetminusD}
\newcommand{\mysetminusS}{\hbox{\tikz{\draw[line width=0.45pt,line cap=round] (2pt,0) -- (0,4pt);}}}
\newcommand{\mysetminusSS}{\hbox{\tikz{\draw[line width=0.4pt,line cap=round] (1.5pt,0) -- (0,3pt);}}}
\newcommand{\mysetminus}{\mathbin{\mathchoice{\mysetminusD}{\mysetminusT}{\mysetminusS}{\mysetminusSS}}}
\newcommand{\N}{\mathbb{N}}
\newcommand{\Z}{\mathbb{Z}}
\newcommand{\C}{\mathbb{C}}
\newcommand{\K}{\mathbb{K}}
\newcommand{\I}{\mathcal{I}}
\newcommand{\J}{\mathcal{J}}
\renewcommand{\P}{\mathbb{P}}
\newcommand{\addots}{\text{\reflectbox{$\ddots$}}}
\renewcommand{\tilde}[1]{\widetilde{#1}}
\DeclareMathOperator{\Span}{Span}
\DeclareMathOperator{\dimv}{\underline{dim}}
\DeclareMathOperator{\Gr}{Gr}
\newcommand{\address}[1]{\gdef\@address{#1}}
\newcommand{\email}[1]{\gdef\@email{\url{#1}}}
\newcommand{\@endstuff}{\par\vspace{\baselineskip}\noindent\small
\begin{tabular}{@{}l}\scshape\@address\\\textit{E-mail address:} \@email\end{tabular}}
\title{Some Lagrangian quiver Grassmannians for \\ the equioriented cycle}
\author{Matteo Micheli}
\address{Dipartimento di Matematica "Guido Castelnuovo"\\ Sapienza Universit\`a di Roma, Piazzale Aldo Moro 5, 00185 Rome, Italy}
 \email{matteomicheli98@outlook.com}
\date{}
\begin{document}

\maketitle

\begin{abstract}
The goal of this paper is to better understand a family of linear degenerations of the classical Lagrangian Grassmannians $\Lambda(2n)$. It is the special case for $k=n$ of the varieties $X(k,2n)^{sp}$, introduced in previous joint work with Evgeny Feigin, Martina Lanini and Alexander Pütz. These varieties are obtained as isotropic subvarieties of a family of quiver Grassmannians $X(n,2n)$, and are acted on by a linear degeneration of the algebraic group $Sp_{2n}$. We prove a conjecture proposed in the paper above for this particular case, which states that the ordering on the set of orbits in $X(n,2n)^{sp}$ given by closure-inclusion coincides with a combinatorially defined order on what are called \emph{symplectic} $(n,2n)$-\emph{juggling patterns}, much in the same way that the $Sp_{2n}$ orbits in $\Lambda(2n)$ are parametrized by a type C Weyl group with the Bruhat order. The dimension of such orbits is computed via the combinatorics of \emph{bounded affine permutations}, and it coincides with the length of some permutation in a Coxeter group of affine type C. Furthermore, the varieties $X(n,2n)$ are GKM, that is, they have trivial cohomology in odd degree and are equipped with the action of an algebraic torus with finitely many fixed points and 1-dimensional orbits. In this paper it is proved that $X(n,2n)^{sp}$ is also GKM, with respect to the action of a subtorus of the above torus.
\end{abstract}
\newpage

\section{Introduction}

The linear degeneration $X(k,N)$ of the classical Grassmannian $\Gr(k,N)$ was studied in \cite{ML1}. It arises as a quiver Grassmannian for a nilpotent representation $U_{[N]}$ of the equioriented cycle on $N$ vertices, and enjoys several nice properties analogous to those of the classical Grassmannian. The orbits for the automorphism group $G$ of $U_{[N]}$ provide a cellular decomposition of $X(k,N)$, and the poset of orbits, with the order relation given by closure inclusion, has an explicit combinatorial description in terms of \emph{juggling patterns}. This poset is also isomorphic to a lower order ideal of a Coxeter group of affine type A with the Bruhat order, and the dimension of an orbit is computed as the length of the corresponding permutation in the Coxeter group. This order is recorded in the 1-skeleton for the action of an algebraic torus $T$ of dimension $N+1$ on $X(k,N)$, and the $T$-equivariant cohomology is easy to compute since $X(k,N)$ is a GKM variety \cite{GKM, LaniniPuetz}.

In \cite{ours}, the subvariety $X(k,2n)^{sp}$ of $X(k,2n)$ was defined, by introducing a notion of "orthogonal subrepresentation" with respect to a suitable non degenerate, skew-symmetric bilinear form. It is a linear degeneration of the classical isotropic Grassmannian $\Gr(k,2n)^{sp}$, and was proved to inherit many interesting properties from the ambient variety. The cells of $X(k,2n)$, when intersected with the subvariety, produce once again a cellular decomposition, whose cells are orbits for the action of the subgroup $G^{sp}$ of $G$ consisting of elements that preserve the symplectic form. The notion of \emph{symplectic juggling patterns} was defined, and these objects parametrize the $G^{sp}$-orbits, called symplectic cells. In the same paper it was conjectured that the combinatorial order inherited by the symplectic juggling patterns, by being a subset of the set of juggling patterns, corresponds once again to the $G^{sp}$-orbit closure inclusion order.

In this paper we focus on the degeneration $X(n,2n)^{sp}$ of the Lagrangian Grassmannian $\Lambda(2n)$, i.e. the case where $k$, the dimension of the vector subspaces of $\C^{2n}$, is exactly half the dimension of the ambient vector space. This implies that the subvariety consists of fixed points for an automorphism of the ambient variety, therefore we can transfer this symmetry to combinatorial and algebraic structures. The symmetry on the poset of juggling patterns extends to an automorphism of a Coxeter group of affine type $A_{2n-1}^{(1)}$, realized as affine permutations, thus its fixed-point subgroup is a Coxeter group of affine type $C_n^{(1)}$ \cite{karpman}. The intersection of this subgroup with the lower order ideal of elements corresponding to juggling patterns is the poset that parametrizes the $G^{sp}$-orbits, and the Bruhat order on this set corresponds to the geometric closure inclusion order on the set of orbits. To prove this result, we define the action of an $(n+1)$-dimensional algebraic torus $T^{sp}$ on $X(n,2n)^{sp}$, and show that the relations for the geometric order are encoded by its 1-dimensional orbits, which correspond to Bruhat order relations in the type $C$ group given by one reflection. The main results are the following:
\begin{theoremA}[Theorem \ref{thm:mainresult1}]
    The $T^{sp}$-action makes $X(n,2n)^{sp}$ a GKM variety.
\end{theoremA}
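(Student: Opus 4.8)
The plan is to verify, for the torus $T^{sp}$, the three defining properties of a GKM variety in turn: that $X(n,2n)^{sp}$ has vanishing cohomology in odd degrees, that it has finitely many $T^{sp}$-fixed points, and that it has finitely many $1$-dimensional $T^{sp}$-orbits. Two structures will be used repeatedly. First, $X(n,2n)^{sp}$ decomposes into $G^{sp}$-orbits, each isomorphic to an affine space and indexed by a symplectic $(n,2n)$-juggling pattern \cite{ours}, and these cells are exactly the intersections with $X(n,2n)^{sp}$ of the cells of $X(n,2n)$. Second, since $k=n$, the variety $X(n,2n)^{sp}$ is the fixed-point locus $X(n,2n)^{\iota}$ of the involution $\iota$ of $X(n,2n)$ sending a subrepresentation to its orthogonal complement; as $X(n,2n)$ is smooth, projective and we work over $\C$, the subvariety $X(n,2n)^{sp}$ is smooth and projective as well. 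The torus $T^{sp}$ will be realized as the subtorus of the $(2n+1)$-dimensional torus $T$ acting on $X(n,2n)$ fixed by the involution that $\iota$ induces on $T$ --- the folding sending affine type $A_{2n-1}^{(1)}$ to affine type $C_n^{(1)}$ --- so that the closed embedding $X(n,2n)^{sp}\hookrightarrow X(n,2n)$ is $T^{sp}$-equivariant and $T^{sp}\subseteq G^{sp}$.

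The first property is immediate: a paving by affine spaces forces the integral cohomology to be free and concentrated in even degrees. For the second property, note that $T^{sp}\subseteq G^{sp}$ preserves each cell. Since the cells of $X(n,2n)^{sp}$ are obtained by intersecting with $X(n,2n)^{sp}$ the cells of the paving of $X(n,2n)$, whose centres are the $T$-fixed points of $X(n,2n)$, the unique fixed point in each surviving cell is simultaneously $T$-fixed and contained in $X(n,2n)^{sp}$, hence $T^{sp}$-fixed; and no cell contains another $T^{sp}$-fixed point, because a suitable cocharacter of $T^{sp}$ contracts each cell onto its centre. Thus $(X(n,2n)^{sp})^{T^{sp}}$ is contained in $(X(n,2n))^{T}$ and is in bijection with the set of symplectic $(n,2n)$-juggling patterns, in particular finite; concretely it consists of the $\iota$-invariant coordinate subspaces among the $T$-fixed points of $X(n,2n)$.

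The third property is the crux. By the same criterion used to show that $X(n,2n)$ is GKM \cite{ML1, LaniniPuetz}, a smooth variety with isolated torus fixed points has finitely many $1$-dimensional $T^{sp}$-orbits if and only if, at each fixed point $p$, the weights of $T^{sp}$ on the tangent space $T_p X(n,2n)^{sp}$ are pairwise non-proportional; under this condition the closure of every $1$-dimensional orbit is moreover a $\P^1$ joining two fixed points, which completes the GKM picture. Here the fact that $X(n,2n)$ is already GKM for $T$ does the heavy lifting: at $p$ the $T$-weights on $T_p X(n,2n)$ are explicitly known and pairwise non-proportional, and the differential of $\iota$ acts on $T_p X(n,2n)$ compatibly with the folding involution on the character lattice of $T$, permuting weight spaces. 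Since $T_p X(n,2n)^{sp}$ is the $(+1)$-eigenspace of this differential, each $T^{sp}$-weight occurring on it is either the restriction of an $\iota$-fixed $T$-weight or the common restriction of a two-element orbit $\{\alpha,\iota\cdot\alpha\}$. It then remains to check that, at every fixed point, these restricted weights are pairwise non-proportional and each occurs with multiplicity one; this is a finite combinatorial verification, best carried out by identifying the tangent weights at $p$ with a set of roots of the affine root system of type $C_n^{(1)}$ attached to the symplectic juggling pattern indexing $p$, and observing that the roots that arise are pairwise non-proportional.

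The step I expect to be the main obstacle is precisely this last verification. Restricting from $T$ to the smaller torus $T^{sp}$, or passing to $\iota$-invariants in the tangent space, could a priori turn two non-proportional $T$-weights into proportional $T^{sp}$-weights, or produce a weight space of dimension greater than one --- either of which would break the GKM property. Ruling this out requires the explicit description of the tangent weights in terms of type $C_n^{(1)}$ roots together with a careful case analysis mirroring the behaviour of the folding, with particular attention to the vertices of the cycle fixed by $\iota$, where the distinction between short and long roots in type $C$ could in principle give rise to proportional tangent weights.
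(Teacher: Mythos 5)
Your proposal has two genuine problems, one factual and one of incompleteness, and they interact.

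\textbf{The smoothness assumption is false.} You assert that $X(n,2n)$ is smooth and then conclude that its fixed locus $X(n,2n)^{sp}$ under the involution $\tau$ is smooth, in order to invoke the tangent-weight criterion for finitely many one-dimensional orbits. But $X(n,2n)$ is a \emph{degeneration} of $\Gr(n,2n)$, not the Grassmannian itself, and it is reducible: as the paper points out in the final example before Theorem~\ref{thm:mainresult3}, there is one maximal cell of top dimension $n^2$ for every $J\in\binom{[2n]}{n}$, so $X(n,2n)$ is a union of $\binom{2n}{n}$ irreducible components and cannot be smooth where they meet. Consequently $X(n,2n)^{sp}$ is not obviously smooth either, and the Zariski tangent space at a fixed point need not have the clean weight-multiplicity-one structure your argument relies on. Without smoothness, ``pairwise non-proportional tangent weights at each fixed point'' is not even the correct criterion, so the framework of the third step collapses at the outset.

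\textbf{The crux is acknowledged but not done.} Even granting the tangent-space framework, you say explicitly that the final verification --- that restricting from $T$ to $T^{sp}$, and passing to $\iota$-invariants, cannot produce proportional weights or multiplicity --- ``remains to check'' and is ``the main obstacle.'' That is precisely the content of the theorem; the proposal as written is an outline with the central argument deferred. The paper sidesteps both issues entirely: it never invokes smoothness or tangent spaces. Instead it first proves the cells $C_\J^{sp}$ are $T^{sp}$-stable by factoring any $x\in T^{sp}$ as $\chi(z)\cdot g$ with $g\in G^{sp}$, then computes the explicit $T^{sp}$-action on the affine cell coordinates $u^{(a)}_{i,j}$ (inherited from Lemma~\ref{lem:tauforcells} and Corollary~\ref{cor:symplcoordinates}) and shows that a one-dimensional orbit forces all nonzero coordinates to share the same weight, which pins down a finite set of possibilities per cell. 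Your fixed-point count (step two) and the odd-cohomology vanishing (step one) are essentially the same as the paper's, but the core of the GKM argument needs to be replaced by something that works on a singular, reducible variety, such as the coordinate-level computation the paper carries out.
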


\begin{theoremA}[Theorem \ref{thm:mainresult2}]
    The closure of a symplectic cell in $X(n,2n)^{sp}$ is the intersection of $X(n,2n)^{sp}$ with the closure of the corresponding affine cell in $X(n,2n)$.
\end{theoremA}

\begin{theoremA}[Theorem \ref{thm:mainresult3}]
    For any symplectic $(n,2n)$-juggling pattern $\J$, the dimension of $C_\J^{sp}$ is equal to the length of the corresponding bounded affine permutation $f_\J$ in $C^0_{n+1}$.
\end{theoremA}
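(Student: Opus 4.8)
The plan is to reduce the computation of $\dim C_\J^{sp}$ to a sign count on a tangent space and then to recognise that count as the inversion statistic computing lengths in $C^0_{n+1}$.

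First, let $\iota\colon X(n,2n)\to X(n,2n)$, $V\mapsto V^{\perp}$, be the algebraic involution whose fixed locus is $X(n,2n)^{sp}$ (the distinctive feature of the case $k=n$). For a symplectic pattern $\J$ the $T$-fixed point $p_\J$ is isotropic, hence $\iota$-fixed, so $\iota$ maps $C_\J$ to itself; since $C_\J^{sp}=C_\J\cap X(n,2n)^{sp}$ by \cite{ours}, we get $C_\J^{sp}=(C_\J)^{\iota}$, the fixed locus of an involution acting on the affine space $C_\J$, hence smooth in characteristic $0$, so $\dim C_\J^{sp}=\dim_{\C}(T_{p_\J}C_\J)^{\iota}$. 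By the dimension/length computation of \cite{ML1} together with the GKM property of $X(n,2n)$, $T_{p_\J}C_\J$ is the sum of pairwise distinct $1$-dimensional $T$-weight spaces $\C e_{\alpha}$, where $\alpha$ runs over the inversion set $\mathrm{Inv}(f_\J^{A})$ of the bounded affine permutation $f_\J^{A}$ of affine type $A_{2n-1}^{(1)}$ attached to $\J$, and $\dim C_\J=\ell(f_\J^{A})=|\mathrm{Inv}(f_\J^{A})|$. The involution $\iota$ commutes with the subtorus $T^{sp}$, so it preserves the $T^{sp}$-isotypic decomposition of $T_{p_\J}C_\J$; by the root combinatorics of the folding $A_{2n-1}^{(1)}\to C_n^{(1)}$ (\cite{karpman}), each isotypic summand is either $\C e_{\alpha}\oplus\C e_{\bar\alpha}$ for a flip-pair $\{\alpha,\bar\alpha\}\subseteq\mathrm{Inv}(f_\J^{A})$ with $\alpha\neq\bar\alpha$, or a single line $\C e_{\alpha}$ for a flip-fixed $\alpha\in\mathrm{Inv}(f_\J^{A})$ (the set $\mathrm{Inv}(f_\J^{A})$ being flip-stable because $f_\J^{A}$ is the unfolding of $f_\J$).

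The crux is the claim that $\iota$ acts as $+1$ on every line $\C e_{\alpha}$ with $\alpha$ flip-fixed, equivalently on the long roots of $C_n^{(1)}$. I would prove this by a local computation at $p_\J$: if $i,i'$ is a pair of vertices of the cycle exchanged by its symmetry, the line $\C e_{\alpha}$ is realised as the deformation $e_i\rightsquigarrow e_i+t\,e_{i'}$ of the coordinate subspace $p_\J$, and because the defining bilinear form is \emph{skew}-symmetric the deformed subspace equals its own orthogonal for every $t$, so $\iota$ fixes the line pointwise. A symmetric form would instead send $t\mapsto-t$, producing the $-1$-action characteristic of types $B$ and $D$; this is exactly where skew-symmetry is essential, and why the answer is governed by type $C$. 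Granting this, a two-element flip-orbit $\{\alpha,\bar\alpha\}$ in $\mathrm{Inv}(f_\J^{A})$ contributes the $1$-dimensional diagonal to $(T_{p_\J}C_\J)^{\iota}$, while a flip-fixed $\alpha\in\mathrm{Inv}(f_\J^{A})$ contributes the whole line, so
\[
\dim C_\J^{sp}=\#\{\text{two-element flip-orbits in }\mathrm{Inv}(f_\J^{A})\}+\#\{\text{flip-fixed elements of }\mathrm{Inv}(f_\J^{A})\}.
\]

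To finish, I would use that under the unfolding each short positive root of $C_n^{(1)}$ has exactly two preimages, forming a flip-orbit, and each long positive root has a single, flip-fixed, preimage, and that $\beta$ is an inversion of $f_\J$ iff its preimage(s) belong to $\mathrm{Inv}(f_\J^{A})$; the two terms above then count the short and the long inversions of $f_\J$, and their sum is the length $\ell(f_\J)$ in $C^0_{n+1}$. Hence $\dim C_\J^{sp}=\ell(f_\J)$. I expect the main obstacle to be precisely the combinatorial bookkeeping of this last step — making the dictionary between symplectic $(n,2n)$-juggling patterns, the permutations of $C^0_{n+1}$, and affine roots rigid enough to identify inversion sets on the nose, with particular care for the \emph{two} special generators $s_0,s_n$ of $C_n^{(1)}$ and for the affine direction, since the naive guess $\dim C_\J^{sp}=\tfrac12\dim C_\J$ is off by exactly the number of long inversions of $f_\J$. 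As consistency checks I would test the formula on the point cell (both sides $0$) and on the open cell, where the right-hand side must equal $\dim\Lambda(2n)=\binom{n+1}{2}$, and compare with the classical Lagrangian stratum, where it must specialise to the usual length statistic for signed permutations.
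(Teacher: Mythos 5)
Your route is genuinely different from the paper's proof of Theorem \ref{thm:mainresult3}. The paper inducts on $s\ell(f_\J)$, using Corollary \ref{cor:symplcoordinates} and Lemma \ref{lem:symplmutationsarereflections} to show that each symplectic mutation (i.e.\ each simple reflection of $C^0_{n+1}$) adds exactly one $\overset{sp}{\sim}$-class of coordinate triples. You instead compute $\dim C_\J^{sp}$ in one shot as the dimension of the $+1$-eigenspace of the linear involution $\iota=\tau$ on the affine cell $C_\J$, decomposing $T_{p_\J}C_\J$ into $T$-weight lines labelled by $L(f_\J)$ and counting flip-orbits. That is a clean global argument, but it offloads the key combinatorics onto the folding identity $s\ell(f_\J)=\bigl\vert L(f_\J)/\!\sim_\Phi\bigr\vert$, which the paper establishes independently as Lemma \ref{lem:simplength}; you correctly flag this as the main bookkeeping step and cite \cite{karpman}, whereas the paper's inductive route avoids invoking that identity directly.

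The claim on which your argument pivots --- that $\iota$ acts by $+1$ on every flip-fixed weight line --- is true, but your justification is only a sketch. The rigorous version is the paper's Lemma \ref{lem:tauforcells}: $\iota$ scales $u^{(a)}_{i,j}$ by $(-1)^{i+j+1}$, and for $(a,i,j)$ to be $\sim$-equivalent to its flip $(-a,\tilde{j},\tilde{i})$ the chain shift $s=2n+1-i-j$ must satisfy $2a\equiv -s\pmod{2n}$, which forces $s$ even, hence $i+j$ odd, hence sign $+1$. Your deformation picture $e_j\rightsquigarrow e_j+t\,e_{\tilde{j}}$ at a $\tau$-fixed vertex, isotropic for all $t$ by skew-symmetry, is exactly the ``middle'' of that $\sim$-chain --- the parity condition is what guarantees a middle exists --- but you would still need to show that this local picture determines the sign for the entire $\sim$-class, which is what the explicit computation in Lemma \ref{lem:tauforcells} provides. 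Finally, a small arithmetic slip: $\dim C_\J^{sp}-\tfrac12\dim C_\J=\tfrac12\vert L(f_\J)^\Phi\vert$, so the naive half-dimension guess is off by \emph{half} the number of long inversions of $f_\J$, not the full count.
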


Theorem B answers Conjecture 4.9 from \cite{ours}, for the Lagrangian case. The paper is structured as follows: in Section \ref{sec:background} we set some notation, recall definitions of quiver representation theory, and introduce the main objects. Here the varieties $X(k,N)$ and $X(n,2n)^{sp}$ are defined, together with juggling patterns and bounded affine permutations. Section \ref{sec:combinatorics} contains most of the combinatorics needed for later considerations, and the symmetry from the variety is transferred to the aforementioned posets. In Sections \ref{sec:torusaction1} and \ref{sec:torusaction2} we describe the action of an algebraic torus $T$ on $X(k,2n)$ and define a subtorus $T^{sp}$ which acts on $X(n,2n)^{sp}$ in a skeletal way. Here we study its 0- and 1- dimensional orbits, try to understand what information they contain, and prove Theorem A. Finally, Section \ref{sec:mainresults} will combine the tools developed up until then into Theorems B and C. As a consequence of Theorem A, in Appendix \ref{appendix} we are able to compute the $T^{sp}$-equivariant cohomology of $X(2,4)^{sp}$, the smallest nontrivial example.

\smallskip

\subsubsection*{Acknowledgements} The author is a member of the INdAM-G.N.S.A.G.A network. He also thanks his PhD supervisor, Martina Lanini, for her continued guidance, as well as Evgeny Feigin and Alexander P\"utz  for many insightful conversations.

\section{Background} \label{sec:background}
\subsection{Quiver Grassmannians}
We first recall some useful notions. They will be needed to introduce the main objects of study, which are realized as subvarieties of some quiver Grassmannians.

\begin{definition}
    A \emph{quiver} $Q = (Q_0,Q_1,s,t)$ is an oriented graph, with vertex set $Q_0$ and edge set $Q_1$, equipped with maps $s,t \colon Q_1 \longrightarrow Q_0$ which give the orientation of each edge by specifying its source and target vertices respectively. The oriented edges are called \emph{arrows}. The notation $\alpha \colon i \longrightarrow j$ represents an arrow with source $i$ and target $j$.
\end{definition}

\begin{definition}
Given a quiver $Q$ and a field $\K$, a \emph{representation} of $Q$ over $\K$ is a collection $M=(M_i,M_\alpha)$ of finite dimensional $\K$-vector spaces $(M_i)_{i \in Q_0}$ over the vertices and of $\K$-linear maps $(M_\alpha \colon M_{s(\alpha)} \longrightarrow M_{t(\alpha)})_{\alpha \in Q_1}$ over the arrows. A \emph{morphism} $\varphi \colon M \longrightarrow M'$ of $Q$-representations over $\K$ is a collection $(\varphi_i)_{i \in Q_0}$ of $\K$-linear maps $\varphi_i \colon M_i \longrightarrow M'_i$ between the vector spaces of $M$ and $M'$ over each vertex, such that for any arrow $\alpha \colon i \longrightarrow j \in Q_1$ the following square
\[\begin{tikzcd}
    M_i \arrow[r,"\varphi_i"] \arrow[d, "M_\alpha"'] & M'_i \arrow[d, "M'_\alpha"] \\ M_j \arrow[r, "\varphi_j"] & M'_j
\end{tikzcd}\]
is commutative. A \emph{subrepresentation} $U \subseteq M$ is a collection of vector subspaces $(U_i \subseteq M_i)_{i \in Q_0}$ such that, for any arrow $\alpha \colon i \longrightarrow j$, the inclusion $M_\alpha (U_i) \subseteq U_j$ is satisfied. The collection of dimensions of the vector spaces $M_i$ forms an element of $\N^{Q_0}$ called the \emph{dimension vector} of $M$, which is denoted by $\dimv M$.
\end{definition}

\begin{definition}
    Let $\ubar{d} \in \N^{Q_0}$ and let $M$ be a representation of $Q$. The space
    \[\Gr_{\ubar{d}}(M) \coloneqq \{U \subseteq M \, \vert \, \dimv U = \ubar{d}\}\]
    is called the \emph{quiver Grassmannian} of subrepresentations of $M$ with dimension vector $\ubar{d}$.
\end{definition}

\begin{remark}
    Quiver Grassmannians are projective varieties, since the product of classical Grassmannians embeds in a projective space, and the conditions $M_\alpha(U_i) \subseteq U_j$ are closed.
\end{remark}

\subsection{The Juggling variety}

From now on we will work with the field of complex numbers $\K =\C$. First we fix two positive integers $k \le N$. We will write $[N]$ for the set $\Set{1, 2, \dots, N}$, $[k,N]$ for $\Set{k, k+1, \dots, N}$ and $\binom{[N]}{k}$ for the set of cardinality $k$ subsets of $[N]$. Our quiver of choice is the equioriented cycle on $N$ vertices, denoted by $\Delta_N$, with vertex set indexed by $\Z_N$, the integers modulo $N$, in such a way that for all $i \in \Z_N$ there is an arrow $\alpha_i \colon i \longrightarrow i+1$.

\begin{center}
    \begin{tikzpicture}
     \begin{scope}[every node/.style={circle, draw=black!100,
     very thin,minimum size=1mm}]
    \node (0) at (90:3cm) {0};
    \node (1) at (45:3cm) {1};
    \node (2) at (0:3cm) {2};
    \node (3) at (-45:3cm) {3};
    \node (4) at (-90:3cm) {4};
    \node (5) at (225:3cm) {5};
    \node (6) at (180:3cm) {6};
    \node (7) at (135:3cm) {7};
\end{scope}

\begin{scope}[every edge/.style= 
              {draw=black,thick}]
\path [->] (0) edge[bend left=19, "$\alpha_0$"] (1);
\path [->] (1) edge[bend left=19, "$\alpha_1$"] (2);
\path [->] (2) edge[bend left=19, "$\alpha_2$"] (3);
\path [->] (3) edge[bend left=19, "$\alpha_3$"] (4);
\path [->] (4) edge[bend left=19, "$\alpha_4$"] (5);
\path [->] (5) edge[bend left=19, "$\alpha_5$"] (6);
\path [->] (6) edge[bend left=19, "$\alpha_6$"] (7);
\path [->] (7) edge[bend left=19, "$\alpha_7$"] (0);
\end{scope}
    \end{tikzpicture}
    
    The quiver $\Delta_8$.
    \end{center}
Our chosen $\Delta_N$-representation, denoted by $U_{[N]}$, has $\C^N$ on every vertex and the linear map $s$ on every arrow, defined as follows:
\begin{equation}\label{eq:s}
s(e_j) = \begin{cases} e_{j+1} & j \ne N \\ 0 & j=N \end{cases}
\end{equation}
where $e_1, \dots, e_N$ denotes the standard basis of $\C^N$. We denote with $G$ the automorphism group of $U_{[N]}$; here $N$ is fixed, so we dropped the subscript. An element of $G$ is a collection of invertible $N \times N$ matrices $(g_i)_{i \in \Z_N}$ such that $s \cdot g_i = g_{i+1} \cdot s$ for all $i$.

Now let $\ubar{k}$ be the dimension vector constant to $k$ for $\Delta_N$. The quiver Grassmannian that we will consider is the following:
\[X(k,N) = \text{Gr}_{\ubar{k}}\left(U_{[N]}\right) .\]
A point of $X(k,N)$ consists therefore of a collection $(V_i)_{i \in \Z_N}$ of $k$-dimensional subspaces of $\C^N$, such that for every vertex $i$ the inclusion $s(V_i) \subseteq V_{i+1}$ holds. The group $G$ acts on the quiver Grassmannian and partitions it into orbits, each containing exactly one point consisting of subspaces spanned by subsets of the standard basis, i.e. coordinate subspaces \cite[Theorem 4.10]{ML1}. Thus the orbits are parametrized by combinatorial objects known as \emph{juggling patterns}, see \cite[Section 3]{KnutsonLamSpeyer}.

\begin{definition}
    A $(k,N)$\emph{-juggling pattern} is a collection $\J = (J_i)_{i \in \Z_N}$ of cardinality $k$ subsets of $[N]$, such that for any $i \in \Z_N$ and $j \in J_i \mysetminus \{N\}$, we have $j+1 \in J_{i+1}$. The set of $(k,N)$-juggling patterns will be denoted by $JP(k,N)$.
\end{definition}

\begin{remark}
    Another $\Delta_N$-representation and its corresponding quiver Grassmannian are studied in \cite{ML1}. This other realization produces the combinatorial set of $(k,N)$-\emph{Grassmann necklaces}. In that paper, the authors remark that one realization is more suited than the other depending on what one wants to prove. The sets of $(k,N)$-juggling patterns and $(k,N)$-Grassmann necklaces are in bijection, since there is an isomorphism between the two representations (and thus the two quiver Grassmannians) which sends coordinate points to coordinate points.
\end{remark}

Given a subset $I$ of $[N]$, we define $V_I$ as the subspace of $\C^N$ spanned by the standard basis vectors with indices in $I$. Then for any $(k,N)$-juggling pattern $\J$ there is a corresponding point $p_\J$ of $X(k,N)$, defined by $(p_\J)_i \coloneqq V_{J_i}$, which we will call a \emph{juggling pattern point}. The $G$-orbit of $p_\J$ is an affine cell \cite[Theorems 1-2]{ML1} and will be denoted by $C_\J$. The affine coordinates for $C_\J$ are given by tuples
\[\left(u^{(a)}_{i,j} \, \big\vert \, \, a \in \Z_N, \, \, j \in J_a, \, \, J_a \not \ni i > j \right)\]
that are subject to conditions of the form $u^{(a)}_{i,j} = u^{(a+1)}_{i+1,j+1}$ \cite[Theorem 5.7]{LaniniPuetz}. That is, they are parametrized by equivalence classes of triples $(a,i,j)$ under the relation $(a,i,j) \sim (a+1,i+1,j+1)$. If a point $V \in C_\J$ has such coordinates, a basis for the vector space $V_a$ on the vertex $a$ is given by
\begin{equation}\label{eq:coordinatebasis}
\left \{v^{(a)}_j \coloneqq e^{(a)}_j + \sum_{\substack{i>j \\ i \notin J_a}} u^{(a)}_{i,j} e^{(a)}_i \right\}_{j \in J_a}\end{equation}
where $e_j^{(a)}$ is the $j$-th standard basis element for $U_{[N]}^{(a)}$, the copy of $\C^N$ on the vertex $a$. Every orbit for the action of $G$ is of the form $C_\J$ for some $\J$, and since $G$ is an algebraic group, their set can be given a partial order defined by the inclusion of their closures:
\[C_\J \le C_{\J'} \overset{\text{def}}{\iff} C_\J \subseteq \overline{C_{\J'}} \, \, .\]
This order can thus be transferred to the set of juggling patterns by the above correspondence:
\[\J \le \J' \overset{\text{def}}{\iff} p_\J \in \overline{C_{\J'}} \, .\]
The closure inclusion order can also be expressed combinatorially. For two subsets \[A = \{a_1, a_2, \dots, a_k\} \, \, \text{and} \, \, B=\{b_1, b_2, \dots, b_k\}\]of $[N]$, ordered increasingly, we say that
\begin{equation}\label{eq:orderforsets}
	A \le B \overset{\text{def}}{\iff}  a_i \le b_i \, \, \forall \, \, i \in [k] \, .
\end{equation}
This allows us to recall the next result \cite[Proposition 7.3]{ML1}:
\begin{prop}
    For two juggling patterns $\J$ and $\J'$, the condition $\J \le \J'$ is equivalent to $J_i \ge J'_i$ for all $i \in \Z_N$.
\end{prop}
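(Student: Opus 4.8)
The plan is to prove the two implications of the equivalence separately. Write $\J\le\J'$ for the closure--inclusion relation $p_\J\in\overline{C_{\J'}}$; the combinatorial counterpart to be matched is the entrywise condition $J_i\ge J'_i$ for all $i$. One preliminary remark streamlines the harder direction: since $C_{\J'}$ is a single $G$-orbit, $\overline{C_{\J'}}$ is closed and $G$-stable, so $p_{\J''}\in\overline{C_{\J'}}$ forces $C_{\J''}=G\cdot p_{\J''}\subseteq\overline{C_{\J'}}$, hence $\overline{C_{\J''}}\subseteq\overline{C_{\J'}}$. Thus $\le$ is transitive, and to prove ``$J_i\ge J'_i\ (\forall i)\Rightarrow\J\le\J'$'' it is enough to treat the case where $\J'$ is covered by $\J$ in the entrywise order on $JP(k,N)$, granting the combinatorial fact that whenever $J_i\ge J'_i$ for all $i$ with $\J\ne\J'$ there is $\J''\in JP(k,N)$ with $J_i\ge J''_i\ge J'_i$ for all $i$ and $\J''$ covering $\J'$.

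\textbf{Necessity.} Fix $a\in\Z_N$ and let $\pi_a\colon X(k,N)\to\Gr(k,N)$ be the morphism $(V_i)_i\mapsto V_a$. By the basis \eqref{eq:coordinatebasis}, the $a$-th component of any point of $C_{\J'}$ is spanned by vectors $e_j+\sum_{i>j,\,i\notin J'_a}u_{i,j}e_i$ with $j\in J'_a$, and for each $\ell\in[N]$ the $\#(J'_a\cap[\ell,N])$ of these basis vectors with $j\ge\ell$ lie in $F_\ell:=\langle e_\ell,\dots,e_N\rangle$. Hence $\dim(V_a\cap F_\ell)\ge\#(J'_a\cap[\ell,N])$ holds on $C_{\J'}$, and since $\dim(V\cap F_\ell)$ is upper semicontinuous in $V$ it persists on $\overline{C_{\J'}}$, in particular at $p_\J$, where $V_a=V_{J_a}$. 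Therefore $\#(J_a\cap[\ell,N])\ge\#(J'_a\cap[\ell,N])$ for all $\ell$, which is equivalent to $J_a\ge J'_a$; letting $a$ vary gives $J_i\ge J'_i$ for all $i$.

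\textbf{Sufficiency.} Suppose $J_i\ge J'_i$ for all $i$. By the preliminary remark and the combinatorial fact we may assume $\J'$ is covered by $\J$. Such a covering step is the operation of pushing one ball up by a single step and propagating it forward through the juggling condition until it would leave the pattern or reach height $N$; the vertices where a change occurs are exactly those appearing in one $\sim$-equivalence class $\kappa$ of coordinate triples of $C_{\J'}$, each of the form $(b,j+1,j)$. Let $\gamma(t)\in C_{\J'}$ be the point all of whose affine coordinates vanish except those in $\kappa$, which are set equal to $t$; by \eqref{eq:coordinatebasis} the distinguished basis vector at each affected vertex $b$ is then $e_j+t\,e_{j+1}$, the others being standard. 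Since $X(k,N)$ is projective, $\gamma$ extends to a morphism $\P^1\to X(k,N)$; computing its value at $t=\infty$ --- rescale these basis vectors by $t^{-1}$ and let $t\to\infty$ --- moves each of the relevant pivots from $j$ to $j+1$ and leaves everything else fixed, so the limit is the coordinate point $p_\J$. Hence $p_\J\in\overline{C_{\J'}}$.

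\textbf{Main obstacle.} The technical core lies in the combinatorics feeding the sufficiency direction: identifying the covers of the entrywise order on $JP(k,N)$ with single $\sim$-classes of the above form, handling balls that reach height $N$ or whose trajectories would collide, checking that the intermediate configurations are still genuine $(k,N)$-juggling patterns, and confirming that $\gamma$ degenerates \emph{exactly} to $p_\J$ rather than to a smaller or incomparable coordinate point. A more structural alternative would bypass the explicit curves and instead show that $\overline{C_{\J'}}$ is cut out in $X(k,N)$ by the rank inequalities $\dim(V_a\cap F_\ell)\ge\#(J'_a\cap[\ell,N])$ for all $a$ and $\ell$: the inclusion $\subseteq$ is the semicontinuity argument above, while $\supseteq$ requires input on the irreducibility and reducedness of the relevant strata of $X(k,N)$, in analogy with the classical description of Schubert varieties as rank loci.
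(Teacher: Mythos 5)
The paper does not supply a proof of this proposition: it is quoted directly from \cite[Proposition 7.3]{ML1}, so there is no in-paper argument to compare yours against. On the merits of your proposal: the necessity direction is correct. The rank inequalities $\dim(V_a\cap F_\ell)\ge\#(J'_a\cap[\ell,N])$ hold throughout $C_{\J'}$ by inspection of the basis \eqref{eq:coordinatebasis}, persist on the closure by upper semicontinuity of $V\mapsto\dim(V\cap F_\ell)$, and at the coordinate point $p_\J$ translate into $J_a\ge J'_a$ for every $a$ via the standard equivalence between the componentwise order on $k$-subsets and these counting inequalities.

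The sufficiency direction, however, rests on an incorrect description of the covers. You assert that every cover in the induced order on $JP(k,N)$ is obtained by pushing one pivot up by exactly one step and propagating, so that the nonzero coordinates of $\gamma(t)$ are all of the form $u^{(b)}_{j+1,j}$. That is false: covers can jump a pivot by $s>1$ precisely when the would-be intermediates fail the juggling condition. For example with $k=1$, $N=4$, take $\J'=(\{2\},\{3\},\{4\},\{4\})$ and $\J=(\{4\},\{3\},\{4\},\{4\})$. One has $J_i\ge J'_i$ for every $i$, and the only juggling patterns $\J''$ with $J'_i\le J''_i\le J_i$ for all $i$ are $\J$ and $\J'$ themselves (the candidate $(\{3\},\{3\},\{4\},\{4\})$ is not a juggling pattern because $3\in J''_0$ would force $4\in J''_1$), so this is a cover; yet it changes only the pivot at vertex $0$, jumping from $2$ to $4$, and the relevant coordinate is $u^{(0)}_{4,2}$, not $u^{(0)}_{3,2}$. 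The curve argument itself can be repaired by replacing $e_j+te_{j+1}$ with $e_j+te_{j+s}$ at each affected vertex, since the resulting nonzero coordinates still form a single $\sim$-class and the limit as $t\to\infty$ is still $p_\J$; but the classification of covers, with general jump length $s\ge 1$ and the terminal condition on $x+s+l+1$ as recalled for mutations in Section~\ref{sec:torusaction1}, must be stated and proved before the reduction to covers is legitimate. Your ``Main obstacle'' paragraph signals that you sense this region is delicate, but the description of the covering step that your argument actually uses is wrong as stated, not merely under-justified.
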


\subsection{The symplectic subvariety}

In order to endow the vector spaces of our representations with symplectic forms, we focus on the case of cycles with an even number of vertices, i.e. when $N = 2n$. This provides both $\Delta_{2n}$ and the representation $U_{[2n]}$ with a particular symmetry.

\begin{definition}
    A \emph{symmetric quiver} is a tuple $(Q, \tau, \varsigma)$, where $Q$ is a quiver, $\tau$ is an involution on both the set of vertices $Q_0$ and the set of edges $Q_1$, such that
    \begin{itemize}
        \item $s \tau(\alpha) = \tau(t \alpha)$ for all $\alpha \in Q_1$;
        \item $t \tau(\alpha) = \tau(s \alpha)$ for all $\alpha \in Q_1$;
        \item if $s \alpha = t \alpha$ then $\tau(\alpha) = \alpha$,
    \end{itemize}
    and $\varsigma$ is a map $Q_0^\tau \cup Q_1^\tau \rightarrow \{\pm 1\}$ which attaches a sign to any fixed vertex or edge. We make a choice, and partition $Q_0$ into $Q_0^\tau \sqcup Q_0^+ \sqcup Q_0^-$, where $\tau(Q_0^+)= Q_0^-$, and do the same for $Q_1$. A dimension vector $\ubar{d} \in \N^{Q_0}$ is \emph{symmetric} if $d_i = d_{\tau(i)}$ for all $i \in Q_0$. A \emph{symmetric representation} $V$ for $(Q, \tau, \varsigma)$ is a representation such that
    \begin{enumerate}[I.]
    \item $\dimv V$ is symmetric;
    \item $V_i \cong V_{\tau(i)}^*$ for all $i \in Q_0^-$;
    \item for $i \in Q_0^\tau$, $V_i$ is equipped with a scalar product if $\varsigma(i) = 1$ or a (non-degenerate) symplectic form if $\varsigma(i) = -1$. Observe that either form provides a natural isomorphism $\psi_i \colon V_i \rightarrow V_i^*$;
    \item If $\alpha \in Q_1^\tau$, then $V_\alpha = \varsigma(\alpha) \cdot V_\alpha^*$, where $V^{**}$ is identified with $V$ canonically;
    \item If $\alpha \in Q_1^+$ and neither its source nor target are $\tau$-fixed, then $V_{\tau{\alpha}} = V_\alpha^*$;
    \item If $\alpha \in Q_1^+$ with $t \alpha \in Q_0^\tau$, then $V_{\tau(\alpha)} = V_\alpha^* \circ \psi_{t \alpha}$;
    \item If $\alpha \in Q_1^+$ with $s \alpha \in Q_0^\tau$, then $V_{\tau(\alpha)} = \psi_{s\alpha} \circ V_\alpha^*$.
\end{enumerate}
We work in the field of complex numbers, but this definition is valid in any characteristic other than 2.
\end{definition}

\begin{lemma}\label{lem:dual}
     Let $n$ be a positive integer and $\Omega$ an invertible $2n \times 2n$ matrix such that
     \begin{equation}\label{eq:omega}
    \Omega^t = \Omega^{-1} = - \Omega \, \, ,\end{equation}
     so that it is the Gram matrix for a symplectic form $(-,-)_\Omega$ on $\C^{2n}$. Then for every matrix $A \in \text{M}_{2n \times 2n}(\C)$ and every pair of vectors $v, w \in \C^{2n}$ we have
    \[(Av,w)_\Omega = (v, -\Omega A^t \Omega w)_\Omega \, .\]
    Specifically, given any subspace $V$, we have
    \[\Omega A^t \Omega (AV)^\perp \subseteq V^\perp \, ,\]
    where $V^\perp$ is the subspace orthogonal to $V$ with respect to $\Omega$. Moreover, if $A$ is invertible then the equality holds.
\end{lemma}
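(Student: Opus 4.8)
The plan is to reduce everything to the single identity $\Omega^2 = -I_{2n}$, which is immediate from \eqref{eq:omega}: combining $\Omega^t = -\Omega$ with $\Omega^t = \Omega^{-1}$ gives $\Omega^{-1} = -\Omega$, hence $\Omega^2 = -I_{2n}$. Writing the form explicitly as $(v,w)_\Omega = v^t \Omega w$, the first displayed identity is then a one-line check: on one hand $(Av, w)_\Omega = (Av)^t \Omega w = v^t A^t \Omega w$, and on the other $(v, -\Omega A^t \Omega w)_\Omega = v^t \Omega(-\Omega A^t \Omega w) = -v^t \Omega^2 A^t \Omega w = v^t A^t \Omega w$, so the two coincide for all $v, w$.

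For the inclusion I would simply unravel the definitions of the orthogonal complements using the adjunction formula just proved. By definition $w \in (AV)^\perp$ means $(Av, w)_\Omega = 0$ for every $v \in V$; by the identity this is equivalent to $(v, -\Omega A^t \Omega w)_\Omega = 0$ for all $v \in V$, i.e. $-\Omega A^t \Omega w \in V^\perp$. Since $V^\perp$ is a linear subspace, $\Omega A^t \Omega w \in V^\perp$ as well, which is exactly the claimed inclusion $\Omega A^t \Omega (AV)^\perp \subseteq V^\perp$. (There is no left/right ambiguity in the notation $V^\perp$ because the form is skew-symmetric, so $(v,w)_\Omega = 0 \iff (w,v)_\Omega = 0$.)

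Finally, for the equality when $A$ is invertible I would argue by dimension count. Non-degeneracy of $(-,-)_\Omega$ gives $\dim W^\perp = 2n - \dim W$ for every subspace $W \subseteq \C^{2n}$; since $A$ is invertible, $\dim AV = \dim V$, hence $\dim (AV)^\perp = 2n - \dim V = \dim V^\perp$. The matrix $\Omega A^t \Omega$ is a product of invertible matrices, hence invertible, so it sends $(AV)^\perp$ onto a subspace of the same dimension $\dim V^\perp$. A subspace contained in another of the same finite dimension must equal it, so $\Omega A^t \Omega (AV)^\perp = V^\perp$.

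There is no real obstacle here: the only points requiring care are the correct placement of $\Omega$ and $\Omega^{-1}$ together with the sign produced by $\Omega^2 = -I_{2n}$, and the observation that $\Omega A^t \Omega$ inherits invertibility from $A$; everything else is the standard fact that a non-degenerate form makes orthogonal complements behave dimension-complementarily.
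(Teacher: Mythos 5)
Your proof is correct and follows essentially the same computation as the paper for the core identity; the paper's proof stops after that one-line chain of equalities and leaves the subspace inclusion and the invertible case to the reader, whereas you spell them out (via the adjunction equivalence and a dimension count), both of which are sound. One small but worthwhile addition on your part is the explicit remark that skew-symmetry removes the left/right ambiguity in $V^\perp$, which the paper tacitly assumes.
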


\begin{proof}
    The following chain of equalities holds:
    \[(Av,w)_\Omega = (Av)^t \Omega w = v^t A^t \Omega w = v^t (-\Omega^2) A^t \Omega w = - v^t \Omega^2 A^t \Omega w = -(v, \Omega A^t \Omega w)_\Omega \, \, .\]
\end{proof}

Consider now the map $\tau$ on $\Delta_{2n}$ that swaps vertices $i$ and $-i$. It is clearly a quiver symmetry, whose fixed vertices are $0$ and $n$; it swaps the edges underlying the arrows $\alpha_i$ and $\alpha_{-i-1}$, therefore fixing none. The choice of $\varsigma$ is arbitrary, so we pick $\varsigma(0) = \varsigma(n) = -1$ since we want to work with symplectic forms. We set $Q_0^+ = \{1, \dots, n-1\}$ and $Q_1^+ = \{\alpha_0, \dots, \alpha_{n-1}\}$. Below, the symmetry for $n=4$ is shown, with $Q_0^+$ and $Q_1^+$ in red, and their images in blue.

\begin{center}
    \begin{tikzpicture}
     \begin{scope}[every node/.style={circle,
     very thin,minimum size=1mm}]
    \node[draw=black!100] (0) at (90:3cm) {0};
    \node[draw=red!100] (1) at (45:3cm) {1};
    \node[draw=red!100] (2) at (0:3cm) {2};
    \node[draw=red!100] (3) at (-45:3cm) {3};
    \node[draw=black!100] (4) at (-90:3cm) {4};
    \node[draw=blue!100] (5) at (225:3cm) {-3};
    \node[draw=blue!100] (6) at (180:3cm) {-2};
    \node[draw=blue!100] (7) at (135:3cm) {-1};
\end{scope}

\begin{scope}[every edge/.style= 
              {thick}]
\path [->] (0) edge[bend left=19, "$\alpha_0$",draw=red] (1);
\path [->] (1) edge[bend left=19, "$\alpha_1$",draw=red] (2);
\path [->] (2) edge[bend left=19, "$\alpha_2$",draw=red] (3);
\path [->] (3) edge[bend left=19, "$\alpha_3$",draw=red] (4);
\path [->] (4) edge[bend left=19, "$\alpha_4$",draw=blue] (5);
\path [->] (5) edge[bend left=19, "$\alpha_5$",draw=blue] (6);
\path [->] (6) edge[bend left=19, "$\alpha_6$",draw=blue] (7);
\path [->] (7) edge[bend left=19, "$\alpha_7$",draw=blue] (0);
\draw[-] (0) edge[draw=ocra,very thick] node[fill=ocra!50] {$\overset{\tau}{\longleftrightarrow}$} (4);
\end{scope}
    \end{tikzpicture}
    \end{center}

\begin{prop}
    The representation $U_{[2n]}$, for $(\Delta_{2n}, \tau, \varsigma)$, is symmetric.
\end{prop}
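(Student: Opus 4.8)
The plan is to produce the data required by the definition of a symmetric representation --- a symplectic form on each of the two vector spaces sitting over a $\tau$-fixed vertex, and an identification $V_i\cong V_{\tau(i)}^{*}$ for every $i\in Q_0^-$ --- and then to verify conditions I--VII one at a time. Two of them are automatic: condition I holds because $\dimv U_{[2n]}$ is constant, equal to $2n$ at every vertex, hence symmetric; and condition IV is vacuous since $\tau$ fixes no arrow of $\Delta_{2n}$.

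For the rest I would fix, at each vertex $i$, the standard basis $e^{(i)}_1,\dots,e^{(i)}_{2n}$ of $V_i=\C^{2n}$ together with its dual basis. First, for $i\in Q_0^-=\{-1,\dots,-(n-1)\}$, define an isomorphism $\phi_i\colon V_i\to V_{\tau(i)}^{*}=V_{-i}^{*}$ of the shape $\phi_i(e^{(i)}_p)=\varepsilon_{i,p}\,(e^{(-i)}_{\rho(p)})^{*}$, where $\rho$ is the (suitably shifted) order-reversing bijection of $[2n]$ and the $\varepsilon_{i,p}\in\{\pm1\}$ are signs still to be pinned down; this is precisely condition II. Then choose the Gram matrices $\Omega_0,\Omega_n$ of the forms $\psi_0,\psi_n$ on $V_0$ and $V_n$ to be the anti-diagonal, signed matrices satisfying \eqref{eq:omega}, i.e.\ nondegenerate and skew-symmetric; this gives condition III. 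Such choices can be made to cohere because $s$ has one-dimensional kernel (spanned by $e^{(i)}_{2n}$) and one-dimensional cokernel (at $e^{(i)}_{1}$) at every vertex, while $s^{*}$ is the backward shift with kernel at $(e^{(i)}_{1})^{*}$: reversing the index set conjugates $s$ into $s^{*}$, and the $\phi_i$ are built to carry out exactly this reversal.

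With the data in place, conditions V, VI and VII become concrete identities among $s$, $s^{*}$, the $\phi_i$ and $\psi_0,\psi_n$. Condition V concerns the arrows of $Q_1^{+}$ with no $\tau$-fixed endpoint and reduces to an equation of the shape $\phi\circ s=s^{*}\circ\phi$ for the relevant pair of identifications --- that is, to the statement that our isomorphisms carry $s$ on $\tau(\alpha)$ to the transpose of $s$ on $\alpha$. Condition VII applies to $\alpha_0$ (tail $0\in Q_0^\tau$) and condition VI to $\alpha_{n-1}$ (head $n\in Q_0^\tau$); each collapses to a single identity in which $\psi_0$, respectively $\psi_n$, appears. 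Here I would use Lemma \ref{lem:dual}, which identifies the $\Omega$-adjoint of a matrix $A$ with $-\Omega A^{t}\Omega$, to rewrite these adjunction-type relations as matrix identities that can be checked directly against the explicit $\Omega_0$ and $\Omega_n$.

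The real content, and the only genuine obstacle, is the combinatorial bookkeeping: choosing $\rho$, the signs $\varepsilon_{i,p}$ and the sign patterns of $\Omega_0,\Omega_n$ so that conditions V, VI and VII all hold for one globally consistent choice around the entire cycle --- the delicate points being that the break of $s$ must be transported coherently all the way around, and that the identifications coming from the non-fixed vertices must be compatible with the skew-symmetry forced at $0$ and $n$. Conceptually this is the assertion that $U_{[2n]}$ is isomorphic, as a $\Delta_{2n}$-representation, to its $\tau$-dual $U_{[2n]}^{\vee}$ via an isomorphism that is skew-symmetric on the $\tau$-fixed vertices: writing $U_{[2n]}=\bigoplus_{j\in\Z_{2n}}M_j$ with $M_j$ the indecomposable summand of all-ones dimension vector and zero map on the arrow $\alpha_{j-1}$, one checks $M_j^{\vee}\cong M_{1-j}$, so $U_{[2n]}^{\vee}\cong U_{[2n]}$. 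Since the involution $j\mapsto 1-j$ of $\Z_{2n}$ has no fixed point, no summand is self-dual, the induced forms on $V_0$ and $V_n$ are hyperbolic, and so they may be chosen skew-symmetric --- which is exactly the choice $\varsigma(0)=\varsigma(n)=-1$. This removes the obstruction, and the verification of I--VII then goes through.
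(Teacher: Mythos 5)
Your approach is sound but genuinely different from the paper's, and it is more of a detailed plan than a finished proof. The paper goes straight for an explicit construction: it fixes the anti-diagonal, alternating-sign matrix $\Omega$ and observes the single identity $s^{t}=\Omega s\Omega$ (equation \eqref{eq:goodomega}). It then uses the \emph{same} $\Omega$ on every vertex — not merely on the $\tau$-fixed ones — so the isomorphisms $V_i\cong V_{\tau(i)}^{*}$ of condition II are again given by $\Omega$, and conditions V--VII all reduce to \eqref{eq:goodomega} combined with Lemma~\ref{lem:dual}. There is no sign bookkeeping to do at all; the relation $s^{t}=\Omega s\Omega$ already transports the ``break'' of $s$ correctly around the cycle. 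That one identity is exactly the observation your proposal is missing, and it is what dissolves what you call ``the only genuine obstacle.''

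What you do instead is a structural existence argument: decompose $U_{[2n]}=\bigoplus_{j}M_j$ into the $2n$ spiral indecomposables, observe that $\tau$-duality sends $M_j$ to $M_{1-j}$, note that $j\mapsto 1-j$ has no fixed point in $\Z_{2n}$, and conclude the pairing is hyperbolic on the $\tau$-fixed vertices so skew-symmetric forms can be chosen. That argument is correct and I checked the index bookkeeping (your $M_j$, breaking at $\alpha_{j-1}$, indeed has $M_j^\vee\cong M_{1-j}$), and it is a perfectly legitimate alternative route: it buys generality (it explains \emph{why} the choice $\varsigma(0)=\varsigma(n)=-1$ is available, namely because no summand is self-dual) at the cost of concreteness. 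The cost matters here, because the explicit $\Omega$ is used heavily in the rest of the paper (Lemmas~\ref{lem:tau}, \ref{lem:sigma}, \ref{lem:tauforcells}, etc.), so a pure existence statement would leave the later arguments without a handle. Also be upfront that as written you have not actually produced the data: the signs $\varepsilon_{i,p}$ are announced but never fixed, and the verifications of V--VII are deferred with ``I would use Lemma~\ref{lem:dual}.'' The hyperbolicity argument at the end does close that gap in principle, but you should either carry it out or point out that setting every $\phi_i$ and every $\psi_j$ equal to $\Omega$ works uniformly, which is the paper's shortcut.
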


\begin{proof}
We fix a skew-symmetric $2n \times 2n$ matrix which will be the Gram matrix for our chosen symplectic form on $\C^{2n}$ in the standard basis:
\[\Omega \coloneqq \begin{pmatrix}
    0 & 0 & \cdots & 0 & 1 \\
    0 & 0 & \cdots & -1 & 0 \\
    \vdots & \vdots & \addots & \vdots & \vdots \\
    0 & 1 & \cdots & 0 & 0 \\
    -1 & 0 & \cdots & 0 & 0
\end{pmatrix}_{\, .}\]
We choose this specific form because it satisfies
\begin{equation}\label{eq:goodomega}
    s^t = \Omega \cdot s \cdot \Omega \, \, .
\end{equation}
The first condition for symmetry is trivially satisfied since $\dimv U_{[2n]}$ is constant to $2n$. Equipping each vector space with the symplectic form $\Omega$ allows us to meet the third condition and to canonically identify $\C^{2n}$ with its dual, in particular on the vertices in $Q_0^-$ so the second condition also holds. The rest are fulfilled because of Lemma \ref{lem:dual} and \eqref{eq:goodomega}.
\end{proof}

The following Lemma is \cite[Proposition 2.4]{ours}.

\begin{lemma}\label{lem:tau}
The image of the map
\begin{align*}
    \tau \colon X(k,2n) &\longrightarrow \prod_{i \in \Z_{2n}} {\text{Gr} (2n-k,2n)} \\
(V_i)_i \, \, & \longmapsto \quad (V_{-i}^\perp)_i
\end{align*}
is $X(2n-k,2n)$, and
\[ X(k,2n) \overset{\tau}{\longrightarrow} X(2n-k,2n) \overset{\tau}{\longrightarrow} X(k,2n)\]
is the identity map. In particular, when $k = n$, $\tau$ is an automorphism of $X(n, 2n)$ of order 2.
\end{lemma}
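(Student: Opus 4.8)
\section*{Proof proposal for Lemma \ref{lem:tau}}

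The plan is to verify that $\tau$ is a regular map with $\tau\circ\tau=\mathrm{id}$; the description of the image and the automorphism statement then follow formally. The one algebraic fact driving the argument is the identity $\Omega s^t\Omega=s$. Indeed, \eqref{eq:omega} gives $\Omega^2=-I$ (from $\Omega^{-1}=-\Omega$ we get $-\Omega^2=\Omega\Omega^{-1}=I$), while \eqref{eq:goodomega} reads $s^t=\Omega s\Omega$, so $\Omega s^t\Omega=\Omega(\Omega s\Omega)\Omega=\Omega^2 s\Omega^2=(-I)s(-I)=s$. In other words $s$ is its own transpose under the identification $\C^{2n}\cong(\C^{2n})^*$ induced by $\Omega$, which is exactly what makes the operation ``take orthogonal complements'' compatible with $s$.

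First I would check that $\tau$ maps $X(k,2n)$ into $X(2n-k,2n)$. Fix $(V_i)_i\in X(k,2n)$ and set $W_i:=V_{-i}^\perp$. Since $\Omega$ is non-degenerate, $\dim W_i=2n-\dim V_{-i}=2n-k$, so the dimension vector is constant equal to $2n-k$. For the subrepresentation condition one must show $s(W_i)\subseteq W_{i+1}$, i.e.\ $s(V_{-i}^\perp)\subseteq V_{-(i+1)}^\perp=V_{-i-1}^\perp$. Beginning from the subrepresentation inclusion $s(V_{-i-1})\subseteq V_{-i}$ and passing to orthogonals reverses it, $V_{-i}^\perp\subseteq(sV_{-i-1})^\perp$; applying $s$ and then Lemma~\ref{lem:dual} with $A=s$ (using $\Omega s^t\Omega=s$) yields
\[
s(V_{-i}^\perp)\ \subseteq\ s\big((sV_{-i-1})^\perp\big)\ =\ \Omega s^t\Omega\,(sV_{-i-1})^\perp\ \subseteq\ V_{-i-1}^\perp,
\]
which is what we want. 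The identical computation, with $k$ replaced by $2n-k$, shows $\tau$ maps $X(2n-k,2n)$ into $X(k,2n)$ as well.

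It then remains to compute the composition. For $(V_i)_i\in X(k,2n)$,
\[
\tau\big(\tau((V_i)_i)\big)\ =\ \Big(\big(V_{-(-i)}^\perp\big)^\perp\Big)_i\ =\ \big((V_i^\perp)^\perp\big)_i\ =\ (V_i)_i,
\]
using $-(-i)=i$ in $\Z_{2n}$ and that $V\mapsto(V^\perp)^\perp$ is the identity on subspaces of a non-degenerate space. Hence $\tau\circ\tau=\mathrm{id}$ on both $X(k,2n)$ and $X(2n-k,2n)$; in particular $\tau\colon X(k,2n)\to X(2n-k,2n)$ is a bijection with inverse $\tau$, so its image is exactly $X(2n-k,2n)$. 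Finally, $\tau$ is a morphism of varieties: componentwise, $V\mapsto V^\perp$ factors as the (algebraic) annihilator isomorphism $\Gr(d,\C^{2n})\to\Gr(2n-d,(\C^{2n})^*)$ followed by the linear isomorphism $(\C^{2n})^*\cong\C^{2n}$ induced by $\Omega$, and reindexing the factors is algebraic; being a morphism with a morphism for inverse, it is an isomorphism. When $k=n$ the source and target coincide, so $\tau$ is an involutive automorphism of $X(n,2n)$, which is moreover not the identity (as one checks on a suitable coordinate point), hence of order $2$.

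I expect the only genuinely delicate point to be the subrepresentation step, where one has to keep careful track of the direction of inclusions under $(\,\cdot\,)^\perp$ and of the index shift $-i\mapsto-i-1$, together with the (standard but not purely formal) verification that $V\mapsto V^\perp$ is a regular map of Grassmannians rather than just a set-theoretic bijection. Everything else is elementary linear algebra once the identity $\Omega s^t\Omega=s$ is in place.
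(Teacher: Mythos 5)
Your proof is correct and is essentially the intended argument: the paper sets up Lemma~\ref{lem:dual} and the identity~\eqref{eq:goodomega} (equivalently $\Omega s^t\Omega=s$, since $\Omega^2=-I$) precisely so that the orthogonal-complement operation intertwines with $s$ after reversing the vertex order, and you carry that computation out carefully, getting the inclusion direction and the index shift $-i\mapsto-i-1$ right. The remaining steps ($\tau\circ\tau=\mathrm{id}$, surjectivity onto $X(2n-k,2n)$ from the two-sided inverse, regularity via the annihilator map composed with the $\Omega$-induced duality) are all sound. One small caveat on your parenthetical aside that $\tau$ is ``moreover not the identity (as one checks on a suitable coordinate point)'': for $n=1$ every line in $\C^2$ is Lagrangian, so $\tau$ is literally the identity on $X(1,2)$ and no such coordinate point exists; the paper's ``of order $2$'' should be read as ``involutive'' (equivalently, the check you propose works only for $n\ge 2$, where non-symplectic juggling patterns exist, e.g.\ $\J'$ in Example~\ref{ex:running1}).
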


\begin{definition}
    We define $X(n,2n)^{sp} \coloneqq X(n,2n)^\tau$, the locus of Langrangian subrepresentations, i.e. $\tau$-fixed points. They will be called \emph{symplectic points}.
\end{definition}

In \cite{ours} the authors defined the subvariety $X(k,2n)^{sp}$ of \emph{isotropic} points for any $k \in [2n]$, but we focus on the case $k=n$ because the automorphism of the variety provides additional insight. Now we recall the morphism of algebraic groups that will allow us to find a linear degeneration of $Sp_{2n}$ which acts on $X(n,2n)^{sp}$. We define
\begin{align*}
\sigma \colon G \longrightarrow & \prod_{i \in \Z_{2n}} GL_{2n} \\
(g_i)_i \longmapsto & (-\Omega \cdot g_{-i}^{-t} \cdot \Omega)_i
\end{align*}
where $g^{-t}$ denotes $(g^{-1})^t$, and all matrices represent linear maps with respect to the standard basis.

\begin{lemma}\label{lem:sigma}
    The image of $\sigma$ is $G$. It also satisfies
    \begin{equation}\label{eq:sigmatau} \tau g \tau (x) = \sigma(g)(x)
    \end{equation}
    for all $g \in G$ and $x \in X(n,2n)$
\end{lemma}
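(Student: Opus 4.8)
The statement has two parts: that $\sigma(G) = G$, i.e. each $\sigma(g)$ is again an automorphism of $U_{[2n]}$; and that $\sigma$ realizes the conjugation action $x \mapsto \tau g \tau(x)$ on $X(n,2n)$. I would start with the second part, since the verification of the intertwining formula \eqref{eq:sigmatau} essentially forces what the right definition of $\sigma$ must be, and then deduce the first part from it together with Lemma \ref{lem:tau}.

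First I would unwind $\tau g \tau (x)$ for $x = (V_i)_i \in X(n,2n)$. By Lemma \ref{lem:tau}, $\tau(x) = (V_{-i}^\perp)_i$. Applying $g = (g_i)_i$ gives the tuple whose $i$-th entry is $g_i(V_{-i}^\perp)$. Applying $\tau$ once more sends this to the tuple whose $i$-th entry is $\big(g_{-i}(V_{i}^\perp)\big)^\perp$ — here I must be careful with the index bookkeeping: $\tau$ acts on a tuple $(W_i)_i$ by $(W_{-i}^\perp)_i$, so the $i$-th entry of $\tau\big((g_j(V_{-j}^\perp))_j\big)$ is $\big(g_{-i}(V_{i}^\perp)\big)^\perp$. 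Now I invoke the equality case of Lemma \ref{lem:dual}: for the invertible matrix $A = g_{-i}$ and the subspace $V = V_i^\perp$, we get $\big(g_{-i}(V_i^\perp)\big)^\perp = \Omega\, g_{-i}^t\, \Omega \,(V_i^\perp)^\perp = -\,\Omega\, g_{-i}^t\, \Omega\, V_i$, using $(V_i^\perp)^\perp = V_i$ (nondegeneracy) and that the $-1$ can be absorbed into the matrix since it is a scalar acting on a subspace. Hmm — I should double-check the sign: Lemma \ref{lem:dual} gives $\Omega A^t \Omega (AV)^\perp = V^\perp$ for invertible $A$, so $(AV)^\perp = (\Omega A^t \Omega)^{-1} V^\perp$; since $\Omega^{-1} = -\Omega$, one has $(\Omega A^t \Omega)^{-1} = \Omega^{-1} A^{-t} \Omega^{-1} = \Omega A^{-t} \Omega$. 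Applying this with $A = g_{-i}$ and $V = V_i^\perp$, the $i$-th entry of $\tau g \tau(x)$ is $\Omega\, g_{-i}^{-t}\, \Omega\, (V_i^\perp)^\perp = \Omega\, g_{-i}^{-t}\, \Omega\, V_i$. Since $\Omega g_{-i}^{-t}\Omega$ and $-\Omega g_{-i}^{-t}\Omega$ have the same image on any subspace, this is exactly $\sigma(g)_i \cdot V_i$, which is the $i$-th entry of $\sigma(g)(x)$. This proves \eqref{eq:sigmatau}, at least set-theoretically; I would remark that $\sigma$ is visibly a group homomorphism (it is $g \mapsto \Theta \circ (g \mapsto g^{-t}) \circ \Theta$ for the linear map reversing indices and conjugating by $\Omega$, and $g \mapsto g^{-t}$ is an anti-automorphism composed with inverse, hence a homomorphism).

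For the first claim, $\sigma(G) \subseteq G$: I need that $\sigma(g) = (-\Omega g_{-i}^{-t}\Omega)_i$ is an automorphism of $U_{[2n]}$, i.e. satisfies $s\cdot(-\Omega g_{-i}^{-t}\Omega) = (-\Omega g_{-i-1}^{-t}\Omega)\cdot s$ for all $i$. This is a direct matrix computation using \eqref{eq:goodomega}, which says $s^t = \Omega s \Omega$: from $s g_i = g_{i+1} s$ one takes inverse-transpose to get $s^{-t} g_i^{-t} = g_{i+1}^{-t} s^{-t}$ — wait, $s$ is not invertible, so I must instead transpose $s g_{-i-1} = g_{-i} s$ directly to get $g_{-i-1}^t s^t = s^t g_{-i}^t$, then take inverse-transpose more carefully, or simply multiply the relation $g_{-i-1}^t s^t = s^t g_{-i}^t$ on both sides by $g_{-i}^{-t}$ and $g_{-i-1}^{-t}$ appropriately: from $s g_{-i-1} = g_{-i} s$ we get $g_{-i-1}^{-1} s^{-1}$-type manipulations fail, so the clean route is $g_{-i}^{-t} s^t = s^t g_{-i-1}^{-t}$ obtained by transposing $g_{-i-1}^{-1} s = s g_{-i}^{-1}$ (which itself comes from $s g_{-i-1} = g_{-i} s$ by left-multiplying by $g_{-i}^{-1}$ and right-multiplying by $g_{-i-1}^{-1}$, legal since the $g$'s are invertible and gives $g_{-i}^{-1} s g_{-i-1} g_{-i-1}^{-1} = g_{-i}^{-1} g_{-i} s g_{-i}^{-1}$, i.e. $g_{-i}^{-1} s = s g_{-i-1}^{-1}$, then transpose). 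Substituting $s^t = \Omega s \Omega$ and using $\Omega^2 = -\mathrm{Id}$ then yields exactly $s (\Omega g_{-i}^{-t}\Omega) = (\Omega g_{-i-1}^{-t}\Omega) s$, as wanted. For surjectivity onto $G$, I would note $\sigma$ is injective (it is built from bijections) and that $\sigma \circ \sigma = \mathrm{id}$: indeed $-\Omega(-\Omega g_{i}^{-t}\Omega)^{-t}\Omega = -\Omega\,\Omega^{-t} g_i \Omega^{-t}\,\Omega$, and since $\Omega^{-t} = (\Omega^{-1})^t = (-\Omega)^t = -\Omega^t = \Omega$, this equals $-\Omega\,\Omega\, g_i\,\Omega\,\Omega = -(\Omega^2) g_i (\Omega^2) = -g_i$ — which is $-g_i$, not $g_i$, so $\sigma \circ \sigma$ is the sign-twist $(g_i)_i \mapsto (-g_i)_i$ rather than the identity; in any case it is a bijection, so $\sigma$ is surjective onto its target, and since we have shown the target lies in $G$ and $\sigma\circ\sigma$ maps back into the image, $\sigma(G) = G$. (Alternatively, surjectivity is immediate from \eqref{eq:sigmatau}: given $h \in G$, the map $x \mapsto \tau h \tau(x)$ agrees with $\sigma(\sigma^{-1}$-preimage$)$, but the cleanest argument is the explicit involutive-up-to-sign formula just given.)

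The main obstacle is purely the sign and index bookkeeping: keeping straight whether $\tau$ on tuples is $(V_i)\mapsto(V_{-i}^\perp)$ or $(V_i)\mapsto(V_i^\perp)$ reindexed, and tracking the scalar $-1$'s coming from $\Omega^2 = -\mathrm{Id}$ and from Lemma \ref{lem:dual}. None of these signs affect the action on subspaces (a nonzero scalar acts trivially), so \eqref{eq:sigmatau} holds cleanly, but they do matter for the precise statement $\sigma(G) = G$ as an equality of subgroups of $\prod GL_{2n}$; there the key observation to record is that $-\mathrm{Id} \in G^{sp}$-type elements are harmless and that $\sigma$ composed with itself is the central sign involution, hence a bijection. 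I would present the computation of \eqref{eq:sigmatau} in full since it is short, and relegate the verification that $\sigma(g)$ commutes with $s$ to a one-line remark citing \eqref{eq:goodomega}.
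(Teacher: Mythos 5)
Your proof takes essentially the same route as the paper's: verify $\sigma(G)\subseteq G$ by the same matrix computation using $s^t = \Omega s \Omega$, and establish \eqref{eq:sigmatau} via Lemma~\ref{lem:dual}. The paper is actually terser on the second part (it just cites Lemma~\ref{lem:dual} and \cite[Lemma~3.2]{ours}), so your explicit unwinding of $\tau g\tau(x)$ on subspaces, correctly noting that the scalar $-1$ is immaterial for the action on a subspace, is a welcome elaboration, and the index bookkeeping $(\tau g\tau(x))_i = \big(g_{-i}(V_i^\perp)\big)^\perp$ is right.

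However, there is a sign slip in your computation of $\sigma\circ\sigma$. When you expand $\bigl(-\Omega g_i^{-t}\Omega\bigr)^{-t}$ you drop the $-1$: since $(-M)^{-1}=-M^{-1}$, one has $\bigl(-\Omega g_i^{-t}\Omega\bigr)^{-t} = -\Omega^{-t} g_i \Omega^{-t} = -\Omega g_i \Omega$ (using $\Omega^{-t}=\Omega$). Therefore
\[
\sigma(\sigma(g))_i = -\Omega\cdot\bigl(-\Omega g_i\Omega\bigr)\cdot\Omega = \Omega^2 g_i \Omega^2 = g_i,
\]
so $\sigma\circ\sigma = \mathrm{id}$, not the sign-twist $(g_i)_i\mapsto(-g_i)_i$. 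This is actually good news for you: $\sigma$ is a genuine involution, so once $\sigma(G)\subseteq G$ is established, surjectivity is immediate ($h = \sigma(\sigma(h))$), giving a cleaner finish than the ``still a bijection in any case'' workaround you offered. The rest of the argument is sound.
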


\begin{proof}
Remember that an element $(g_i)_i$ of $G$ satisfies $s \cdot g_i = g_{i+1} \cdot s$ for all vertices $i$, as does its inverse. Therefore, for the first statement to hold, we need to prove that $s \cdot \sigma(g)_i = \sigma(g)_{i+1} \cdot s$ for all $i \in \Z_{2n}$. Taking the transpose of both sides in \eqref{eq:goodomega} we obtain $\Omega \cdot s^t \cdot \Omega = s$, thus
\begin{gather*}    
\sigma(g)_{i+1} \cdot s = -\Omega \cdot g_{-i-1}^{-t} \cdot \Omega \cdot s = \Omega \cdot g_{-i-1}^{-t} \cdot s^t \cdot \Omega = \Omega \cdot \bigl(s \cdot (g_{-i-1}^{-1}) \bigr)^t \cdot \Omega = \\
= \Omega \cdot \bigl( (g_{-i}^{-1}) \cdot s \bigr)^t \cdot \Omega = \Omega \cdot s^t \cdot g_{-i}^{-t} \cdot \Omega = -\Omega \cdot s^t \cdot \Omega^2 \cdot g_{-i}^{-t} \cdot \Omega = -s \cdot \Omega \cdot g_{-i}^{-t} \cdot \Omega = s \cdot \sigma(g)_i \, .
\end{gather*}
Lastly, we see that $\tau$ and $\sigma$ satisfy $\eqref{eq:sigmatau}$ by Lemma \ref{lem:dual} and by the definition of $\tau$. This is also \cite[Lemma 3.2]{ours}.
\end{proof}

Let $G^{sp}$ be the subgroup of symplectic elements of $G$, i.e. those $g=(g_i)_i$ which satisfy $(g_iv,g_{-i}w)=(v,w)$ for all $i \in \Z_{2n}$, $v \in U^{(i)}_{[2n]}$ and $w \in U^{(-i)}_{[2n]}$, and this subgroup coincides with $G^\sigma$. We are interested in studying the decomposition of $X(n,2n)^{sp}$ into strata, which are the $G^{sp}$-orbits. The next result was proved for generic $k$ as  \cite[Proposition 3.9]{ours}, but in the Lagrangian case one could also apply \cite[Section 2.1]{magyarwz} for a more steamlined proof.

\begin{prop}\label{prop:orbitintersection}
    The following equality holds for all $x \in X(n,2n)^{sp}$:
\begin{equation}\label{eq:goodorbits1}
G^{sp} \cdot x = (G \cdot x) \cap X(n,2n)^{sp} \, \, .
\end{equation}
In addition, the $G^{sp}$-orbits are affine spaces.
\end{prop}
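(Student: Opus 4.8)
The plan is to prove the two inclusions of \eqref{eq:goodorbits1} separately, with the nontrivial one being ``$\supseteq$''. The inclusion ``$\subseteq$'' is almost immediate: if $g \in G^{sp} = G^\sigma$ and $x \in X(n,2n)^{sp} = X(n,2n)^\tau$, then $g \cdot x \in G \cdot x$ trivially, and $\tau(g \cdot x) = \sigma(g)(\tau(x)) = g \cdot x$ by Lemma \ref{lem:sigma} together with $\sigma(g) = g$ and $\tau(x) = x$, so $g \cdot x \in X(n,2n)^{sp}$.

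For the reverse inclusion, suppose $y = g \cdot x$ for some $g \in G$, with both $x$ and $y$ in $X(n,2n)^{sp}$. I want to produce an element $h \in G^{sp}$ with $h \cdot x = y$. The key computation is this: since $\tau(x) = x$ and $\tau(y) = y$, applying $\tau$ to $y = g \cdot x$ and using \eqref{eq:sigmatau} gives
\[
y = \tau(y) = \tau(g \cdot \tau(x)) = \tau g \tau(x) = \sigma(g)(x) \, .
\]
Hence $g^{-1}\sigma(g)$ fixes $x$, i.e.\ $g^{-1}\sigma(g) \in \Stab_G(x)$. The strategy is then to modify $g$ within its coset $g\Stab_G(x)$ to land in $G^{sp}$: I look for $r \in \Stab_G(x)$ such that $h := gr$ satisfies $\sigma(h) = h$, which unwinds to the equation $\sigma(r) = (g^{-1}\sigma(g))^{-1} r = \sigma(g)^{-1} g \, r$ in $\Stab_G(x)$. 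Writing $c := g^{-1}\sigma(g) \in \Stab_G(x)$, note that $\sigma$ restricts to an involution of $\Stab_G(x)$ (because $\sigma$ is an involutive group automorphism of $G$ by Lemma \ref{lem:sigma}, and it preserves $\Stab_G(x)$ since $\sigma(s)\cdot x = \tau s \tau(x) = \tau s(x) = \tau(x) = x$ for $s \in \Stab_G(x)$), and that $c$ satisfies the cocycle identity $c \cdot \sigma(c) = 1$: indeed $\sigma(c) = \sigma(g)^{-1}\sigma^2(g) = \sigma(g)^{-1} g = c^{-1}$. So what is needed is that this $\sigma$-twisted $1$-cocycle $c$ on $\Stab_G(x)$ is a coboundary, i.e.\ $c = r^{-1}\sigma(r)$ for some $r \in \Stab_G(x)$; then $h = gr$ works.

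To verify the coboundary condition I would use the explicit structure of $\Stab_G(x)$. Since $x = p_{\J}$ can be taken to be a juggling pattern point (every $G$-orbit contains one, and it lies in $X(n,2n)^{sp}$ when the orbit meets the subvariety, by the symmetry of $\tau$), $\Stab_G(x)$ is a connected solvable linear algebraic group — a semidirect product of a torus by a unipotent group, or one can argue via the cell structure that the orbit map $G \to C_{\J}$ realizes $C_{\J}$ as $G/\Stab_G(x)$ with $\Stab_G(x)$ having a $\sigma$-stable Levi decomposition. Over $\C$ (or any algebraically closed field), $H^1$ of an involution acting on a connected solvable group with a $\sigma$-stable maximal torus vanishes: on the unipotent radical one solves the coboundary equation layer by layer through the central series (each successive quotient is a vector group, where the twisted $H^1$ vanishes because $1 + \sigma$ or $1 - \sigma$ is surjective on the relevant eigenspaces after a suitable change of variables — this is where skew-symmetry of $\Omega$ enters), and on the torus part one uses that $H^1$ of a finite group acting on a torus over an algebraically closed field is trivial (divisibility). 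Assembling these gives the desired $r$.

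The main obstacle is the cohomological vanishing step: making precise that $\Stab_G(x)$ admits a $\sigma$-equivariant decomposition into pieces on which the twisted $H^1$ vanishes, and handling the unipotent part correctly. A cleaner alternative, which I would pursue if the cohomological argument gets unwieldy, is entirely constructive: the affine coordinates \eqref{eq:coordinatebasis} on $C_{\J}$ give an explicit section of $G \to C_{\J}$, so one can transport everything to coordinates and directly solve $\sigma(r) = c^{-1}r$ by the triangular recursion dictated by the relations $u^{(a)}_{i,j} = u^{(a+1)}_{i+1,j+1}$; the skew-symmetry $\Omega^t = -\Omega$ guarantees the relevant linear systems are solvable at each stage. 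Either way, the crux is that passing from $G$ to $G^{sp}$ costs nothing at the level of a single orbit because the obstruction lives in a vanishing Galois-type cohomology set.
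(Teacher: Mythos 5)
The first half of your argument is exactly right: the inclusion ``$\subseteq$'' is immediate, and for ``$\supseteq$'' the reduction to showing that the specific class $c := g^{-1}\sigma(g)$ in $\Stab_G(x)$ is a $\sigma$-twisted coboundary is a correct and illuminating reformulation (up to a minor sign/convention slip: you need $c = r\sigma(r)^{-1}$, equivalently $c = (r')^{-1}\sigma(r')$ with $r'=r^{-1}$, not $c = r^{-1}\sigma(r)$ with the same $r$ — harmless). The problem is that the remaining step, the vanishing of the relevant twisted $H^1$, is not established by the argument you sketch, and the specific assertion you lean on is false.

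Concretely: it is \emph{not} true that $H^1$ of a finite group acting on a torus over an algebraically closed field is automatically trivial. Take $\Z/2$ acting trivially on $\C^*$: the twisted cocycle condition $c\sigma(c)=1$ reads $c^2=1$, while a coboundary $r\sigma(r)^{-1}$ is identically $1$, so $H^1 = \mu_2 \neq 0$. Divisibility is no help here — it cannot kill the $2$-torsion when $\sigma$ fixes a $\C^*$ factor pointwise. So to make your argument work you would need to verify that the actual action of $\sigma$ on (a maximal torus of) $\Stab_G(x)$ has no trivial constituent — which is plausible given the form $\sigma(g)_i = -\Omega g_{-i}^{-t}\Omega$, but it is precisely the computation you omit. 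You would also need to justify that $\Stab_G(x)$ is connected and admits a $\sigma$-stable Levi-type decomposition; neither is argued.

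The paper sidesteps all of this by invoking the criterion of Magyar–Weyman–Zelevinsky: it suffices to exhibit $G$ as the group of units of a finite-dimensional associative algebra $E$ (here $E = \End_{\Delta_{2n}}(U_{[2n]})$), to check that $g \mapsto \sigma(g^{-1})$ extends to a $\C$-linear anti-involution of $E$, and to check that $\Stab_G(x)$ equals the group of invertible elements of its own linear span inside $E$. The last condition is verified by an elementary direct computation: if $A = \sum_j a^j h^j$ is invertible with each $h^j \in \Stab_G(x)$, then $A_i V_i \subseteq \sum_j a^j h^j_i V_i = V_i$, and invertibility upgrades this to equality. This criterion packages exactly the cohomological vanishing you are after, but in a form that requires no structure theory of the stabilizer, no Levi decomposition, and no case analysis on the torus action. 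Your approach could in principle be made to work, but it would require a genuine computation of the $\sigma$-action on $\Stab_G(x)$ that is harder and less robust than the paper's route; as written, the key vanishing step has a false premise and so there is a real gap.
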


\section{Combinatorics}\label{sec:combinatorics}
Now we are going to transfer the symmetry from $X(n,2n)$ to the poset of $(n,2n)$-juggling patterns. Observe that $\Omega$ pairs the basis vector $e_i$ with $e_{2n-i+1}$. We write $\tilde{i}$ in place of $2n-i+1$ for brevity. In particular $(e_i, e_j)_\Omega = (-1)^{i+1} \delta_{j, \tilde{i}}$. Given a subset $J$ of $[2n]$, we define
\[R(J) \coloneqq [2n] \mysetminus \{\tilde{j} \, \vert \, j \in J\} \, \, .\]
Clearly $R \bigl(R(J) \bigr)=J$ and $V_J^\perp = V_{R(J)}$. Here $V_J$ is the coordinate subspace of $\C^{2n}$ spanned by the standard basis elements with indices in $J$. Also $\vert R(J) \vert = 2n - \vert J \vert$. In particular it is an involution on the set of cardinality $n$ subsets of $[2n]$. We extend $R$ to the set of $(n,2n)$-juggling patterns, sending $\mathcal{J} = (J_i)_i$ to $R(\mathcal{J}) \coloneqq \bigl( R(J_{-i}) \bigr)_i$. This is another $(n,2n)$-juggling pattern: first, observe that the restriction of the linear map $s$ on the standard basis produces the following function
\begin{align*}
[2n-1] \longrightarrow& \, \, [2,2n] \\
i \quad \longmapsto& \, \, i+1 \, .
\end{align*}
which will be again called $s$, with a slight notation abuse. Now observe that $R$ satisfies
\begin{gather*}
    R(I \cup J) = R(I) \cap R(J) \\
    R(I \cap J) = R(I) \cup R(J) \\
    I\subseteq J \Longrightarrow R(J) \subseteq R(I) \, \, .
\end{gather*}
Moreover, given $J \in \begin{pmatrix}
    [2n] \\ n
\end{pmatrix}$ that does not contain $2n$, we have that $1 \in R(J)$ and that $1 \notin sJ$, therefore $2n \in R(sJ)$. Then
\[
R(sJ) = [2n] \mysetminus \{\tilde{j+1} \, \vert \, j \in J\} = [2n] \mysetminus \{\tilde{j}-1 \, \, \vert \, j \in J\} = \{a \in [2n] \, \vert \, a+1 \ne \tilde{j} \, \, \forall \, j \in J\}
\]
thus
\begin{equation}\label{eq:srs}
s\bigl( R(sJ) \mysetminus \{2n\} \bigr) = \{a+1 \, \vert \, a \in R(sJ) \, \land \, a \ne 2n\} = R(J) \mysetminus \{1\} \, \, .\end{equation}
Now considering a juggling pattern $\J$, we want to show that $s\bigl( (R\J)_i \mysetminus \{2n\} \bigr) \subseteq (R\J)_{i+1}$.
By hypothesis $s(J_{-i-1} \mysetminus \{2n\}) \subseteq J_{-i}$, and so
\[R(\J)_i = R(J_{-i}) \subseteq R\bigl( s(J_{-i-1} \mysetminus \{2n\}) \bigr) \]
which implies
\begin{gather*}
    s \bigl( (R\J)_i \mysetminus \{2n\} \bigr) = s \bigl( R(J_{-i}) \mysetminus \{2n\} \bigr) \subseteq s \Bigl( R \bigl(s(J_{-i-1} \mysetminus \{2n\}) \bigr) \mysetminus \{2n\} \Bigr) \overset{\eqref{eq:srs}}{=} \\ \overset{\eqref{eq:srs}}{=} R(J_{-i-1} \mysetminus \{2n\}) \mysetminus \{1\} \overset{\triangle}{\subseteq} R(J_{-i-1}) = (R\J)_{i+1}.
\end{gather*}
The inclusion {\scriptsize $\triangle$} is an equality when $2n \in J_{-i-1}$ and is a strict inclusion otherwise.

\begin{remark}
    Notice that $R\bigl(R(\J) \bigr) = \J$ for any juggling pattern $\J$. Although we are focusing on the Lagrangian case, the above computation also works for $(k,2n)$-juggling patterns with $k \in [n]$, and all results and definitions up until Lemma \ref{lem:length} included can be extended to the other isotropic cases. 
\end{remark}

\begin{example}\label{ex:running1}
    The $(2,4)$-juggling pattern \[\J = \begin{matrix}
        &\{2,4\}& \\
        \{3,4\}&&\{3,4\} \\
        &\{3,4\}&
    \end{matrix}\]
    satisfies $\J=R\J$, while for
    \[\J' = \begin{matrix}
        &\{2,4\}& \\
        \{3,4\}&&\{2,3\} \\
        &\{3,4\}&
    \end{matrix}\]
    we compute
    \[R(\J') = \begin{matrix}
        &\{2,4\}& \\
        \{1,4\}&&\{3,4\} \\
        &\{3,4\}&
    \end{matrix} \, .\]
    Here we have written the juggling patterns with vertex 0 at the top and the other ones clockwise in order.
\end{example}

Observe that the ordering from \eqref{eq:orderforsets} is also well defined for subsets of a finite totally ordered set, which must be isomorphic to $[N]$ for some $N$.

\begin{lemma}\label{lem:stupid}
Let $A$ and $B$ be subsets of a totally ordered finite set $T$ and suppose they have the same cardinality $d$. If $A \le B$, then $B^c \le A^c$.
\end{lemma}
\begin{proof}
	We prove this result by induction on $d$. The base case $d = 0$ is clearly trivial. When $d \ge 1$, let $A = \{a_1 < a_2 < \dots < a_d\}$ and $B = \{b_1 < b_2 < \dots < b_d\}$. Since $A \le B$, we also have $A \mysetminus \{a_d\} \le B \mysetminus \{b_d\}$. These sets have cardinality $d-1$ so by inductive hypothesis we get $A^c \cup \{a_d\} \ge B^c \cup \{b_d\}$. Let us call them $X$ and $Y$ respectively and let $x_i$ and $y_i$ be their (ordered) elements, where $i \in [m+1]$ and $m = \vert T \vert - d$. The elements $a_d \in X$ and $b_d \in Y$ are of the form $x_t$ and $y_s$ for some integers $t \le s$, since $a_d \le b_d$.
    Let $x'_i$ and $y'_i$ denote the (ordered) elements of $A^c = X \mysetminus \{a_d\}$ and $B^c = Y \mysetminus \{b_d\}$. By definition they are
    \[\begin{cases}
        x'_i = x_i & 1 \le i < t \\ x'_i = x_{i+1} & t \le i \le m
    \end{cases} \qquad \qquad \begin{cases}
        y'_i = y_i & 1 \le i < s \\ y'_i = y_{i+1} & s \le i \le m
    \end{cases} \, .\]
    Comparing them, if $t=s$ we can simply remove them from $X$ and $Y$, otherwise we have
    \[\begin{cases}
        y'_i = y_i \le x_i = x'_i & 1 \le i < t \\
        y'_i = y_i \le x_i < x_{i+1} = x'_i  & t \le i < s \\
        y'_i = y_{i+1} \le x_{i+1} = x'_i & s \le i \le m \, .
    \end{cases}\]
    In either case the inductive step is proved.
\end{proof}

\begin{corollary}\label{cor:GNorder}
    The map $R \colon JP(n,2n) \longrightarrow JP(n,2n)$ is order-preserving.
\end{corollary}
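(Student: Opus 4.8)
The plan is to reduce the statement to a claim about single subsets and then invoke Lemma \ref{lem:stupid1}. By \cite[Proposition 7.3]{ML1} (the characterisation of the order recalled just above the symplectic subsection), for juggling patterns the condition $\J \le \J'$ is equivalent to $J_i \ge J'_i$ for all $i \in \Z_{2n}$; and since $R(\J)_i = R(J_{-i})$ by definition, the corollary is equivalent to the following implication: whenever $J, J'$ are cardinality-$n$ subsets of $[2n]$ with $J \ge J'$, one has $R(J) \ge R(J')$.

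First I would rewrite $R(J)$ in a convenient form. Writing $\widetilde{J} \coloneqq \{\tilde{j} \mid j \in J\}$ for the image of $J$ under the involution $i \mapsto \tilde{i} = 2n+1-i$, the definition gives $R(J) = [2n] \mysetminus \widetilde{J} = (\widetilde{J})^c$, the complement being taken inside $[2n]$. Since $i \mapsto \tilde{i}$ reverses the order on $[2n]$, if $j_1 < \dots < j_n$ are the elements of $J$ in increasing order, then the elements of $\widetilde{J}$ in increasing order are $\tilde{j}_n < \tilde{j}_{n-1} < \dots < \tilde{j}_1$; in particular the $\ell$-th smallest element of $\widetilde{J}$ equals $2n+1 - j_{n+1-\ell}$.

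Next I would use this to show that $J \ge J'$ implies $\widetilde{J} \le \widetilde{J'}$. Indeed, writing $j'_1 < \dots < j'_n$ for the elements of $J'$ in increasing order, the hypothesis $J \ge J'$ says $j_m \ge j'_m$ for all $m \in [n]$; applying this with $m = n+1-\ell$ and reversing the inequality gives $2n+1-j_{n+1-\ell} \le 2n+1-j'_{n+1-\ell}$, i.e. the $\ell$-th smallest element of $\widetilde{J}$ is $\le$ the $\ell$-th smallest element of $\widetilde{J'}$, for every $\ell \in [n]$. Hence $\widetilde{J} \le \widetilde{J'}$. Finally, since $\widetilde{J}$ and $\widetilde{J'}$ are both cardinality-$n$ subsets of $[2n]$ with $\widetilde{J} \le \widetilde{J'}$, Lemma \ref{lem:stupid1} yields $(\widetilde{J'})^c \le (\widetilde{J})^c$, that is $R(J') \le R(J)$, which is exactly $R(J) \ge R(J')$. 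This proves the reduced claim, and therefore the corollary.

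There is essentially no obstacle here beyond bookkeeping: the only delicate point is the order-reversal in the previous paragraph, namely correctly matching the $\ell$-th smallest element of $\widetilde{J}$ with the $(n+1-\ell)$-th element of $J$. Once that indexing is pinned down, the result follows immediately from Lemma \ref{lem:stupid1} together with \cite[Proposition 7.3]{ML1}.
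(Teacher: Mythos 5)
Your proof is correct and follows essentially the same route as the paper: reduce to the per-vertex statement via the characterisation $\J \le \J'\iff J_i \ge J'_i$, observe that $i\mapsto\tilde{i}$ reverses the order (so $J\ge J'$ gives $\widetilde{J}\le\widetilde{J'}$), and conclude with Lemma \ref{lem:stupid1}. The only difference is presentational — you spell out the index-matching for the order-reversal that the paper dispatches with the one-line remark ``$x\ge y$ implies $\tilde{y}\ge\tilde{x}$.''
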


\begin{proof}
    Observe that $x \ge y$ implies $\tilde{y} \ge \tilde{x}$. Let $\J \le \J'$ be $(n, 2n)$-juggling patterns; since we have $J_i \ge J'_i$ for all $i$, we obtain $(RJ)_{-i} \ge (RJ')_{-i}$ from Lemma \ref{lem:stupid}, that is, $R\J \le R\J'$.
\end{proof}

\begin{definition}
    A $(n,2n)$-juggling pattern $\J$ is called \emph{symplectic} if $\J = R\J$, or equivalently if $p_\J \in X(n,2n)^{sp}$. Their set will be denoted by $JP(n,2n)^{sp}$.
\end{definition}

\begin{lemma}\label{lem:goodorbits2}
    A cell $C_\J$ in $X(n,2n)$ intesects $X(n,2n)^{sp}$ if and only if its juggling pattern point $p_\J$ is in $X(n,2n)^{sp}$.
\end{lemma}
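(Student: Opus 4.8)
The plan is to prove both implications. The ``if'' direction is immediate: if $p_\J \in X(n,2n)^{sp}$ then $p_\J$ itself is a point of $C_\J \cap X(n,2n)^{sp}$, so the intersection is nonempty. The substance is in the ``only if'' direction: assuming some point $V = (V_i)_i \in C_\J$ satisfies $\tau(V) = V$, I want to conclude $p_\J = \tau(p_\J)$, i.e. $R\J = \J$. The natural strategy is to show that the automorphism $\tau$ of $X(n,2n)$ maps the cell $C_\J$ onto the cell $C_{R\J}$, so that $\tau(C_\J) \cap C_\J \ne \emptyset$ forces $C_{R\J} = C_\J$, hence $R\J = \J$ (cells are disjoint and indexed bijectively by juggling patterns).

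First I would establish that $\tau(C_\J) = C_{R\J}$. Since $\tau$ is an automorphism of the variety $X(n,2n)$ of order $2$ (Lemma \ref{lem:tau}) and, by Lemma \ref{lem:sigma}, intertwines the $G$-action with itself via the automorphism $\sigma$ of $G$ (namely $\tau g \tau = \sigma(g)$ on points), $\tau$ carries $G$-orbits to $G$-orbits; concretely $\tau(G \cdot x) = \sigma(G) \cdot \tau(x) = G \cdot \tau(x)$, using that $\sigma$ is surjective onto $G$. So $\tau(C_\J)$ is again a cell, and to identify which one it suffices to compute $\tau(p_\J)$ and check it is a juggling pattern point. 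But $\tau(p_\J)_i = (V_{J_{-i}})^\perp = V_{R(J_{-i})} = V_{(R\J)_i}$ using the identity $V_J^\perp = V_{R(J)}$ recorded just before Example \ref{ex:running1}; hence $\tau(p_\J) = p_{R\J}$, which is the coordinate point of the cell $C_{R\J}$. Therefore $\tau(C_\J) = C_{R\J}$, and in particular $C_\J$ is $\tau$-stable iff $R\J = \J$ iff $\J$ is symplectic.

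Now suppose $C_\J \cap X(n,2n)^{sp} \ne \emptyset$, say $V$ lies in it. Then $\tau(V) = V \in C_\J$, so $V \in \tau(C_\J) \cap C_\J = C_{R\J} \cap C_\J$. Since distinct $G$-orbits are disjoint, $C_{R\J} = C_\J$, which by \cite[Theorem 4.10]{ML1} (each orbit contains a unique coordinate point) gives $R\J = \J$, i.e. $p_\J = \tau(p_\J) \in X(n,2n)^{sp}$. This completes the equivalence. I do not anticipate a genuine obstacle here; the only point requiring a little care is the clean verification that $\tau$ permutes the cells according to $\J \mapsto R\J$, which is exactly where Lemmas \ref{lem:tau} and \ref{lem:sigma} and the set-theoretic identity $V_J^\perp = V_{R(J)}$ are used, together with the fact that the coordinate point determines the cell.
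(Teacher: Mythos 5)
Your proof is correct and takes essentially the same route as the paper's: both rely on the intertwining relation $\tau g \tau = \sigma(g)$ from Lemma~\ref{lem:sigma}, the identity $\tau(p_\J) = p_{R\J}$, and the uniqueness of the coordinate point in each $G$-orbit. The only cosmetic difference is that you first record the fact $\tau(C_\J) = C_{R\J}$ and then apply it, whereas the paper manipulates the fixed point $g \cdot p_\J$ directly via the chain $g\cdot p_\J = \tau(g\cdot p_\J) = \sigma(g)\cdot p_{R\J}$ and invokes disjointness of orbits.
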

\begin{proof}
    One implication is obvious, since $C_\J = G \cdot p_\J$. For the other one, let $g \in G$ be such that $g \cdot p_\J \in X(n,2n)^{sp}$. Then we have
    \[g \cdot p_\J = \tau (g \cdot p_\J ) = \sigma(g) \cdot ( \tau p_\J ) = \sigma(g) \cdot (p_{R\J}) \, .\]
    The same point is in the $G$-orbit of two juggling pattern points, so $p_{R\J}$ must coincide with $p_\J$, that is, $\J$ must be symplectic.
\end{proof}

Lemma \ref{lem:goodorbits2} and Proposition \ref{prop:orbitintersection} imply that the $G^{sp}$-orbits in $X(n,2n)^{sp}$ are parametrized by the symplectic $(n,2n)$-juggling patterns. This, together with \eqref{eq:goodorbits1} allows us to define \emph{symplectic cells}:

\begin{definition}
    If $\J$ is a symplectic $(n,2n)$-juggling pattern, its $G^{sp}$-orbit will be denoted by $C_\J^{sp}$.
\end{definition}

These orbits are again affine cells, by \cite[Theorem A]{ours}. More will be said about them in Sections \ref{sec:torusaction2} and \ref{sec:mainresults}.

\subsection{Bounded affine permutations}
We will now introduce a set of permutations of the integers which is isomorphic to the set of juggling patterns, and which will later allow us to make key computations needed for the main proofs of this paper. See also \cite[2.7]{karpman}.

\begin{definition}
    An $(k,N)$\emph{-affine permutation} is a bijection of the integers $f \colon \Z \longrightarrow \Z$ such that $f(i+N) = f(i)+N$ for all $i \in \Z$ and that
    \[\sum_{i=1}^N (f(i)-i) = kN \, .\]
\end{definition}

Observe that the first condition implies that any such sum over $N$ consecutive numbers results in $kN$. We denote the set of $(k,N)$-affine permutations with $A^k_N$. The reader will notice that the composition of two $(k,N)$-affine permutations is not another $(k,N)$-affine permutation, unless $k=0$. Moreover, $A^0_N$ with such a product forms a group, which is a Coxeter group of affine type $A_{N-1}^{(1)}$. The set $S$ of its simple reflections consists of the affine permutations $s_0, s_1, \dots, s_{N-1}$, where $s_i$ switches $i+tN$ and $i+1 +tN$ for all $t \in \Z$. If $j$ is any integer, we write $s_j$ for the simple reflection $s_i$ with $j \equiv_N i$. Conjugates of simple reflections are called reflections and are denoted by $(i,j)$, where it is understood that such a permutations switches $i+tN$ and $j+tN$ for all $t \in \Z$.

\smallskip

While $A^k_N$ is not a group, there is a bijection $A^0_N \longrightarrow A^k_N$ given by
\[g \mapsto g \circ \text{id}_k\]
where $\text{id}_k(i) = i+k$. Since $(A^0_N,S)$ is a Coxeter group, this allows us to transfer its Bruhat order to $A^k_N$ in the expected way: we have $g \circ \text{id}_k \le g' \circ \text{id}_k$ whenever $g \le g'$. For $f \in A^k_N$, its length $\ell(f)$ is defined as the cardinality of the set:
\begin{equation}\label{eq:lengthset}
L(f) \coloneqq \{(x,y) \in [0,N-1] \times \Z \, \, \vert \, \, x < y \, \land \, f(x) > f(y)\} \, \, .\end{equation}
If $f = g \circ \text{id}_k$, its length as a $(k,N)$-affine permutation coincides with the length of $g$ as an element of the Coxeter group $(A^0_N,S)$ \cite{BjornerBrenti}.
\begin{definition}
    A $(k,N)$-affine permutation $f$ is \emph{bounded} if it satisfies
    \[i \le f(i) \le i+N\]
    for all $i \in \Z$.
\end{definition}
These permutations form a lower order ideal in $A^k_N$, which we will call $\mathcal{B}_{(k,N)}$, and it is in order-preserving bijection with the poset of $(k,N)$-juggling patterns. In \cite{ML1} the correspondence with Grassmann necklaces is shown. In our setting, precomposing with the isomorphism between the postes of juggling patterns and Grassmann necklaces, we get the following: given a $(k,N)$-juggling pattern $\J$, the corresponding $(k,N)$-bounded affine permutation is defined by:
\begin{equation}\label{eq:boundedfromjuggling}
f_\J(a) \coloneqq \begin{cases}
    a & \qquad N \notin J_{\overline{a}} \, , \\
     a+N+1-b & \qquad J_{\overline{a+1}}=s(J_{\overline{a}} \mysetminus \{N\}) \cup \{b\}
\end{cases}\end{equation}
where $\overline{a}$ is the residue class of $a \in \Z$ modulo $N$. Conversely, to get a juggling pattern $\J_f$ from a bounded affine permutation $f$, we start with an integer $a$ and consider the set
\begin{equation}\label{eq:jugglingfrombounded}
    \left\{a-f(b) \, \vert \, b<a \land f(b) \ge a \right\} \subset \Z \, .
\end{equation}
Then $\left(\J_f \right)_{\overline{a}}$ is given by the representatives in $[N]$ of the cosets of the elements in \eqref{eq:jugglingfrombounded}. Observe that it really only depends on $\overline{a}$ since $f$ is bounded and affine. Furthermore, any affine cell $C_\J$ has dimension equal to $\ell(f_\J)$ \cite[Theorem 7.5]{ML1}.

\begin{remark}
    Given an affine permutation $f$, we have $f \circ (i,j) = (f(i),f(j)) \circ f$. Moreover, if both $f$ and $f'=f \circ (i,j)$ are bounded, then $\vert \, j-i \, \vert < N$. Otherwise, assuming $f$ is bounded, the following chain of inequalities
    \[i \le f(i) = f'(j) \le i + N < j\]
    proves that $f'$ cannot be bounded as well.
\end{remark}

\begin{lemma}\label{lem:mutatedpermutations}
    Let $f \le f'$ be two $(k,N)$-bounded affine permutations such that $f = g \circ \text{id}_k$ and $f' = g \circ t \circ \text{id}_k$, for some $g \in A^0_N$ and $t = (i,j)$ with $i < j$. Then, the juggling patterns $\J_f$ and $\J_{f'}$ differ only on the vertices from $i-k+1$ to $j-k$, and the difference is that $(\J_f)_{i-k+s}$ and $(\J_{f'})_{i-k+s}$ contain, up to equivalence modulo $N$, $i-k+s-f(i-k)$ and $i-k+s-f'(i-k)$ respectively.
\end{lemma}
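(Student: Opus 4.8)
The plan is to use the explicit formula \eqref{eq:jugglingfrombounded} for recovering a juggling pattern from a bounded affine permutation, and to compare the sets attached to $f$ and $f' = f \cdot (i,j)$ vertex by vertex. Recall that $f$ and $f'$ agree except that $f(i) = f'(j)$ and $f(j) = f'(i)$ (this is the identity $f \cdot (i,j) = (f(i),f(j)) \cdot f$ from the remark, read appropriately); moreover since both are bounded we have $j - i < N$. Since $f \le f'$ and $t = (i,j)$ with $i < j$, boundedness forces $f(i) \le f'(i) = f(j)$, i.e. $f(i) \le f(j)$, and $f'$ is obtained from $f$ by swapping these two values so that the larger one now sits at position $i$. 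The point is that $f$ and $f'$ also agree modulo $N$-shifts, so the swap is really a swap of the two residue classes $f(i) + N\Z$ and $f(j) + N\Z$ at positions in $i + N\Z$ and $j + N\Z$.

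First I would fix a vertex, represented by an integer $a$, and write $S_a(f) = \{\, a - f(b) \mid b < a,\ f(b) \ge a \,\}$ for the set in \eqref{eq:jugglingfrombounded}, whose residues modulo $N$ give $(\J_f)_{\overline a}$. Comparing $S_a(f)$ and $S_a(f')$: the only indices $b$ where $f$ and $f'$ differ are those congruent to $i$ or to $j$ modulo $N$, and by boundedness at most one representative of each class lies in the window that matters for a given $a$. Concretely, for the class $i + N\Z$ the relevant representative $b$ is the one with $b < a \le f(b)$; because $i \le f(i) \le i + N$ this pins $b$ down (it is $i$ if $a$ is in the appropriate range, else a shift of it), and similarly for $j + N\Z$. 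So I would carry out the following steps: (1) determine, as a function of $a$, which representative $b_i \equiv i$ and which $b_j \equiv j$ are candidates for membership in $S_a(f)$ resp. $S_a(f')$; (2) observe that membership of $b_i$ in $S_a(f)$ is governed by $f(i) \ge a$ (mod the shift), membership of $b_j$ by $f(j) \ge a$, and since $f(i) \le f(j)$ the three cases $a \le f(i)$, $f(i) < a \le f(j)$, $f(j) < a$ determine everything; (3) in the outer two ranges $S_a(f) = S_a(f')$ because either both representatives contribute the same shifted value or neither contributes, using $f(i) = f'(j)$, $f(j) = f'(i)$ and affinity; (4) in the middle range $f(i) < a \le f(j)$ exactly the element $a - f(j) = a - f'(i)$ is removed and $a - f'(i)$'s counterpart $a - f(j) \pmod N$ replaced — more precisely $(\J_f)_{\overline a}$ contains the residue of $a - f(i - k)\cdot(\text{shift})$ where I should track the index shift carefully.

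The index bookkeeping is where the stated range $i - k + 1$ to $j - k$ comes from: the vertex label attached to the window around an integer $a$ in the necklace-to-pattern dictionary is $\overline{a}$, but the juggling-pattern convention in \eqref{eq:jugglingfrombounded}–\eqref{eq:boundedfromjuggling} carries a shift by $k$, so the interval $f(i) < a \le f(j)$ of affected integers $a$ translates, after subtracting $f(i - k)$ and matching conventions, into the vertex interval $i - k + 1 \le \bullet \le j - k$; and at vertex $i - k + s$ the differing element is the residue of $(i - k + s) - f(i-k)$ for $\J_f$ versus $(i-k+s) - f'(i-k)$ for $\J_{f'}$, as claimed. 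I expect the main obstacle to be precisely this reconciliation of the two index conventions — the "$-k$" shift between affine-permutation positions and juggling-pattern vertices, together with the fact that \eqref{eq:jugglingfrombounded} reads off residues in $[N]$, so one must argue that no spurious wraparound changes which vertices are affected. Once the windows are correctly aligned, steps (1)–(4) are a finite case check using only boundedness ($i \le f(i) \le i+N$), affinity, and $f(i) \le f(j)$, so the argument should be short modulo that care.
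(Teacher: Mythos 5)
Your case analysis is built around comparing $a$ to the \emph{values} $f(i)$ and $f(j)$ (step~(2): ``the three cases $a\le f(i)$, $f(i)<a\le f(j)$, $f(j)<a$ determine everything''), so that the affected integers $a$ allegedly form the interval $(f(i),f(j)]$. This is where the argument fails, for two compounding reasons. First, a bookkeeping slip that propagates: from $f=g\circ\text{id}_k$ and $f'=g\circ(i,j)\circ\text{id}_k$ one gets $f'=f\cdot(i-k,j-k)$, \emph{not} $f'=f\cdot(i,j)$, since $(i,j)\circ\text{id}_k=\text{id}_k\circ(i-k,j-k)$; thus $f$ and $f'$ differ on the residue classes of $i-k$ and $j-k$, with $f(i-k)=f'(j-k)$ and $f(j-k)=f'(i-k)$, and $f\le f'$ gives $f(i-k)\le f(j-k)$ but \emph{not} $f(i)\le f(j)$. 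Second, and fatally, even after the shift correction the affected vertices are $[i-k+1,j-k]$ and this set does not coincide, mod $N$, with the interval between $f(i-k)$ and $f(j-k)$ — the two can even have different cardinalities. Concretely take $N=4$, $k=1$, $g=s_1$ (so $g(0)=0,\ g(1)=2,\ g(2)=1,\ g(3)=3$), and $t=(0,1)=s_0$. Both $f=g\circ\text{id}_1$ and $f'=gs_0\circ\text{id}_1$ are bounded, $f<f'$, and computing via \eqref{eq:jugglingfrombounded} gives $\J_f=(\{4\},\{3\},\{4\},\{4\})$ and $\J_{f'}=(\{2\},\{3\},\{4\},\{4\})$: they differ only at vertex $0=j-k$. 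Your (shift-corrected) interval $(f(i-k),f(j-k)]=(g(0),g(1)]=(0,2]$ would predict the affected vertices are $\{1,2\}$, where the two patterns in fact agree.

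The deeper reason this cannot be repaired by more careful accounting is that whether the class of $i-k$ contributes an element to $S_a(f)$ is governed by \emph{both} inequalities $b<a$ and $f(b)\ge a$ simultaneously, for the unique representative $b\equiv i-k$ in the window $[a-N,a-1]$; partitioning by $a$ versus the values $f(\cdot)$ alone ignores the first inequality and loses control of which representative is in play. The argument that actually works does not compare $a$ to $f(i-k)$ and $f(j-k)$ at all. It pivots on the observation — never surfaced in your proposal — that boundedness of \emph{both} $f$ and $f'$ gives $f(i-k)=g(i)\ge j-k$ and $f'(i-k)=g(j)\ge j-k$. Hence for every $a\in[i-k+1,j-k]$ the representative $b=i-k$ already satisfies $b<a\le f(b)$ and $b<a\le f'(b)$, placing $a-f(i-k)$ in $S_a(f)$ and $a-f'(i-k)$ in $S_a(f')$; since $g(i)\not\equiv_N g(j)$ these are distinct residues, which is the asserted difference on exactly the asserted vertices.
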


\begin{proof}
    For $b \in \Z$, we have that $f(b) = f'(b)$ if and only if $b \not \equiv_{N} i-k, \, j-k$, and that $f(i-k)=f'(j-k)$, $f(j-k)=f'(i-k)$.
    In addition, since $f$ and $f'$ are bounded and affine we know that $f(i-k), f'(i-k) \ge j-k$, thus for $i-k+1 \le a \le j-k$, the sets in \eqref{eq:jugglingfrombounded} for $f$ and $f'$ contain $a-f(i-k)$ and $a-f'(i-k)$ respectively. We conclude by observing that they have different residue class modulo $N$, once again because $f$ and $f'$ are affine.
\end{proof}

\begin{example}\label{ex:running2}
    The bounded affine permutation corresponding to the juggling patterns from Example \ref{ex:running1} are
    \begin{align*}
        f_\J: 0& \mapsto 1 \quad 1 \mapsto 3 \quad 2 \mapsto 4 \quad 3 \mapsto 6 \\
        f_{\J'}: 0& \mapsto 3 \quad 1 \mapsto 1 \quad 2 \mapsto 4 \quad 3 \mapsto 6 \\
        f_{R(\J')}: 0& \mapsto 1 \quad 1 \mapsto 3 \quad 2 \mapsto 6 \quad 3 \mapsto 4
    \end{align*}
    and the first two satisfy $f_{\J'} = f_\J \circ (0,1)$, therefore we have $f_{\J'} = g_\J \circ (2,3) \circ \text{id}_2$ where $f_\J = g_\J \circ \text{id}_2$. The two juggling patterns differ only on vertex 1, where the difference is $4 \in J_1$ and $2 \in J'_1$.
\end{example}

\subsection{Symplectic bounded affine permutations} \label{subsec:sympl-permutations}
Since there is a bijection between the poset of $(n, 2n)$-juggling patterns and the poset of $(n, 2n)$-bounded affine permutations, we can draw the following diagram.
\[\begin{tikzcd}
    JP(n, 2n) \arrow[r, leftrightarrow] \arrow[d, leftrightarrow, "R"] & \mathcal{B}_{(n, 2n)} \\ JP(n, 2n) \arrow[r, leftrightarrow] & \mathcal{B}_{(n, 2n)}
\end{tikzcd}\]
We complete the square by composing the maps, and obtain a symmetry which is once again order-preserving. In order to explicitly describe it, we define a function $\mathcal{B}_{(n, 2n)} \longrightarrow \mathcal{B}_{(n, 2n)}$, and prove that it is the one we are looking for. Let us call it $R$ as well. This map is given by
\begin{equation}\label{eq:Rforpermutations}
R(f)(i) \coloneqq 2n-f(-i-1)-1 = 4n-f(2n-i-1)-1
\end{equation}
as explained in \cite[Section 3]{karpman}.

\begin{prop}
The image of any $(n,2n)$-bounded affine permutation under $R$ is an $(n,2n)$-bounded affine permutation.
\end{prop}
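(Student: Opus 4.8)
The plan is to verify directly from the formula \eqref{eq:Rforpermutations} that $R(f)$ satisfies the three defining conditions of an $(n,2n)$-bounded affine permutation: it is a bijection of $\Z$, it is affine with period $2n$ and has the correct ``charge'' $kN = 2n^2$, and it satisfies the boundedness inequality $i \le R(f)(i) \le i + 2n$. I would take these in turn. First, affineness: from $R(f)(i) = 2n - f(-i-1) - 1$ and the identity $f(-i-1-2n) = f(-i-1) - 2n$, one computes $R(f)(i+2n) = 2n - f(-(i+2n)-1) - 1 = 2n - f(-i-1) + 2n - 1 = R(f)(i) + 2n$, so $R(f)$ is $2n$-periodic in the required sense. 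Bijectivity is clear because $R(f)$ is the composition of the bijection $i \mapsto -i-1$ of $\Z$, the bijection $f$, and the bijection $m \mapsto 2n - m - 1$ of $\Z$.

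Next I would check the charge condition $\sum_{i=1}^{2n}\bigl(R(f)(i) - i\bigr) = 2n^2$. Substituting the formula gives $\sum_{i=1}^{2n}\bigl(2n - 1 - f(-i-1) - i\bigr)$. As $i$ runs over $\{1,\dots,2n\}$, the index $-i-1$ runs over a complete set of residues modulo $2n$, so by affineness $\sum_{i=1}^{2n} f(-i-1) = \sum_{j=1}^{2n} f(j) - c\cdot 2n$ for the appropriate integer shift $c$; combining this with $\sum_{i=1}^{2n}(f(j)-j) = 2n^2$ and $\sum_{i=1}^{2n}(2n-1-i) = 2n(2n-1) - \tfrac{2n(2n+1)}{2} = n(2n-3) \cdot \text{(reorganized)}$, the shift terms cancel and the total collapses to $2n^2$. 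The cleanest way to organize this is to note that $f \mapsto R(f)$ is visibly an involution on the ambient set $A^n_{2n}$ (apply the formula twice: $R(R(f))(i) = 2n - R(f)(-i-1) - 1 = 2n - (2n - f(i) - 1) - 1 = f(i)$), and that $R$ sends the group $A^0_{2n}$ to itself by the same computation with $kN=0$; so once one knows $R$ preserves the affine-permutation structure abstractly, the charge for general $k$ follows from compatibility with the bijection $g \mapsto g\circ\mathrm{id}_k$, or one simply does the displayed sum.

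Finally, boundedness. Given $i \le f(i) \le i + 2n$ for all $i$, apply this inequality at the argument $-i-1$: we have $-i-1 \le f(-i-1) \le -i-1+2n$. Multiplying by $-1$ and adding $2n-1$ yields $i \le 2n - f(-i-1) - 1 \le i + 2n$, which is exactly $i \le R(f)(i) \le i+2n$. This is the whole content, and it is essentially immediate; the one thing to be careful about is that the defining inequalities for boundedness are ``two-sided'' so the order reversal from the minus sign maps the lower bound to the upper bound and vice versa without loss. I do not expect a genuine obstacle here: the only mildly fiddly point is bookkeeping the periodic shift in the charge computation, and that is handled automatically once one observes $R$ is an involution intertwining $A^0_{2n}$ with itself. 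The reference to \cite[Section 3]{karpman} can be invoked for the fact that this $R$ on permutations agrees with the map induced by $R$ on juggling patterns under the bijection, so that the diagram preceding the proposition genuinely commutes, though verifying the square directly from \eqref{eq:boundedfromjuggling} and the definition of $R$ on $JP(n,2n)$ is also routine.
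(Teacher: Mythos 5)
Your primary argument is correct and is the same as the paper's: check $2n$-periodicity directly, verify the charge $\sum_{i=1}^{2n}(Rf(i)-i)=2n^2$ by reindexing, and obtain boundedness by applying $-i-1 \le f(-i-1) \le 2n-i-1$ and negating. (The paper organizes the charge computation a bit more cleanly by pairing $f(-i-1)+i+1$ with $f(j)-j$ under $j=-i-1$, but your version is equivalent.)

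One caveat: the parenthetical ``cleaner'' route is not right as stated. The formula $R(g)(i)=2n-g(-i-1)-1$ does \emph{not} send $A^0_{2n}$ to itself: if $\sum_{j}(g(j)-j)=0$, the same reindexing gives $\sum_i(R(g)(i)-i)=4n^2$, so the naive $R$ maps $A^0_{2n}$ into $A^{2n}_{2n}$, not $A^0_{2n}$. The involution $R^0$ on $A^0_{2n}$ that the paper introduces afterward, $R^0(g)(a)=-g(-a-1)-1$, is a different formula; it is obtained from $R$ by pre- and post-composing with $\mathrm{id}_{\pm n}$, and that conjugation is exactly what absorbs the charge discrepancy. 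So ``the charge for general $k$ follows from compatibility with $g\mapsto g\circ\mathrm{id}_k$ once one knows $R$ preserves $A^0_{2n}$'' would not go through, and in any case observing that $R$ is an involution only gives you that $R$ is a self-inverse bijection of the set of $2n$-periodic bijections of $\Z$; it does not by itself pin down the charge. None of this affects your main proof --- just drop the aside.
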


\begin{proof}
    Fix $f \in \mathcal{B}_{(n,2n)}$. First we show that $Rf$ is an affine permutation:
    \begin{itemize}[\phantom{a}]
        \item \[Rf(i+2n) = 2n - f(-i-2n-1)-1 = 4n -f(-i-1)-1 = Rf(i)+2n;\]
        \item \begin{gather*}\sum_{i=1}^{2n} (Rf(i)-i) = \sum_{i=1}^{2n} (2n-f(-i-1)-i-1) =  4n^2 -\sum_{i=1}^{2n} (f(-i-1)+i+1) = \\
        = 4n^2 - \sum_{j=-2n-1}^{-2} (f(j)-j) = 4n^2 - 2n^2 = 2n^2 \, \, .\end{gather*}
    \end{itemize}
    Now we prove that $Rf$ is bounded: for all $i \in \Z$, we have
    \[-i-1 \le f(-i-1) \le 2n-i-1\]
    since $f$ is bounded. We change signs and add $2n-1$ to obtain
    \[i \le Rf(i) \le i+2n \quad \forall \, i \in \Z\]
    which concludes the proof.
\end{proof}

\begin{lemma}\label{lem:correctsymmetry}
For any $(n,2n)$-juggling pattern $\J$, the equality $R(f_\J) = f_{R(\J)}$ holds.
\end{lemma}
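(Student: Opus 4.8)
The plan is to prove the equality of the two bounded affine permutations $R(f_\J)$ and $f_{R(\J)}$ by checking that they agree at every integer $a$, using the explicit formula \eqref{eq:boundedfromjuggling} on both sides together with the definition \eqref{eq:Rforpermutations} of $R$ on permutations. Since $\J \mapsto f_\J$ and $f \mapsto \J_f$ are mutually inverse and both copies of $R$ are already known to be well defined, this pointwise check suffices. Throughout I will use the elementary facts recorded in Section \ref{sec:combinatorics}: $(R\J)_{\overline{a}} = R(J_{\overline{-a}})$, $2n \in R(J) \iff 1 \notin J$, $1 \in R(J) \iff 2n \notin J$, $R(A \cup B) = R(A) \cap R(B)$, and the identity \eqref{eq:srs}.

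First I would unwind the left-hand side. By \eqref{eq:Rforpermutations} we have $R(f_\J)(a) = 2n - f_\J(-a-1) - 1$, and \eqref{eq:boundedfromjuggling} (applied at $-a-1$, whose successor class is $\overline{-a}$) splits into two cases. If $2n \notin J_{\overline{-a-1}}$ then $f_\J(-a-1) = -a-1$, so $R(f_\J)(a) = a + 2n$. If $2n \in J_{\overline{-a-1}}$, write $J_{\overline{-a}} = s\bigl(J_{\overline{-a-1}} \mysetminus \{2n\}\bigr) \cup \{b\}$ for the unique new element $b$; then $f_\J(-a-1) = 2n - a - b$ and hence $R(f_\J)(a) = a + b - 1$.

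Next I would compute $f_{R\J}(a)$ in the matching cases and see that the values coincide. When $2n \notin J_{\overline{-a-1}}$, one gets $1 \in R(J_{\overline{-a-1}}) = (R\J)_{\overline{a+1}}$ and, since $J_{\overline{-a}} = s(J_{\overline{-a-1}})$ forces $1 \notin J_{\overline{-a}}$, also $2n \in (R\J)_{\overline{a}}$; so \eqref{eq:boundedfromjuggling} puts $f_{R\J}$ in its second case, and the new element is necessarily $1$ (it lies in $(R\J)_{\overline{a+1}}$ but not in the image of $s$), giving $f_{R\J}(a) = a + 2n$. When $2n \in J_{\overline{-a-1}}$ and $b = 1$, then $1 \in J_{\overline{-a}}$, so $2n \notin (R\J)_{\overline{a}}$ and $f_{R\J}(a) = a = a + b - 1$. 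The remaining subcase $2n \in J_{\overline{-a-1}}$ with $b \ge 2$ is the heart of the argument: here $1 \notin J_{\overline{-a}}$, so $J_{\overline{-a}} = s(K)$ with $K \coloneqq \bigl(J_{\overline{-a-1}} \mysetminus \{2n\}\bigr) \cup \{b-1\}$ and $2n \notin K$; using $R(J_{\overline{-a-1}} \mysetminus \{2n\}) = R(J_{\overline{-a-1}}) \cup \{1\}$ and $R(\{b-1\}) = [2n] \mysetminus \{\widetilde{b-1}\}$ one finds $R(K) = \bigl(R(J_{\overline{-a-1}}) \mysetminus \{\widetilde{b-1}\}\bigr) \cup \{1\}$, and then \eqref{eq:srs} applied to $K$ gives $s\bigl((R\J)_{\overline{a}} \mysetminus \{2n\}\bigr) = R(K) \mysetminus \{1\} = (R\J)_{\overline{a+1}} \mysetminus \{\widetilde{b-1}\}$. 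Hence the new element for $R\J$ at $a$ is $c = \widetilde{b-1} = 2n - b + 2$, so $f_{R\J}(a) = a + 2n + 1 - c = a + b - 1$, as wanted.

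The main obstacle is exactly this last subcase: one must track precisely how $R$ interacts with the shift $s$ and with the insertion/deletion of the boundary indices $1$ and $2n$, in order to express the new element $c$ of $R\J$ in terms of the new element $b$ of $\J$; the key input there is the identity \eqref{eq:srs}. Everything else reduces to bookkeeping with the two boundary equivalences $2n \in R(J) \iff 1 \notin J$ and $1 \in R(J) \iff 2n \notin J$.
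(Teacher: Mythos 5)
Your proof is correct and follows essentially the same route as the paper: a pointwise verification of $R(f_\J)(a) = f_{R\J}(a)$ via a three-way case split on whether $2n \in J_{\overline{-a-1}}$ and, if so, whether the new element $b$ equals $1$ or lies in $[2,2n]$. The one place where you spell things out more explicitly than the paper is in the final subcase, where you derive $c = \widetilde{b-1}$ via \eqref{eq:srs} and the set identities for $R$; the paper asserts the equivalent statement (with $\tilde{b}+1$) as an observation at the start and then runs the case-chain.
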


\begin{proof}
    By definition, if $\J$ is such that $2n \in J_a$ for some vertex $a$ and that $J_{a+1}=s(J_a \mysetminus \{2n\}) \cup \{b\}$ with $b \ne 1$, then $2n \in (R\J)_{-a-1}$ and $(R\J)_{-a}=s((R\J)_{-a-1} \mysetminus \{2n\}) \cup \{\tilde{b}+1\}$ with $\tilde{b}+1 \ne 1$. This is true because no element of this set is of the form $\tilde{x}$ with $x \in J_{a}$, and because it has the right cardinality. The opposite also holds, by switching $\J$ and $R\J$. Remember that the bounded affine permutation $f_\J$ corresponding to $\J$ is defined in \eqref{eq:boundedfromjuggling}. We are going to look at the images of the elements in $[2n]$ under $f_{R_\J}$ and compare them to their images under $Rf_\J$. In the subsequent chains of equivalent conditions we use the fact that $1 \in J_{a+1}$ if and only if $2n \in J_a$ and $b=1$. This means that, if $1$ is not in $J_{a+1}$ then either $2n \notin J_a$, or $2n \in J_a$ and $b \in [2,2n]$. First,
\begin{gather*}
    f_{R\J}(a) = a \iff 2n \notin (R\J)_a \iff 1 \in J_{-a} \iff \\ \iff 2n \in J_{-a-1} \, \land \, J_{-a}=s(J_{-a-1} \mysetminus \{2n\}) \cup \{1\} \iff \\
    \iff f_\J(-a-1) = -a-1+2n \iff Rf_\J(a)=a \, .
    \end{gather*}

\noindent Moreover,

\begin{gather*}
    f_{R\J}(a) = a+2n \iff 2n \in (R\J)_a \, \land \, (RJ)_{a+1}=s \bigl( (R\J)_a \mysetminus \{2n\}\bigr) \cup \{1\} \iff \\ \iff 1 \in (R\J)_{a+1} \iff 2n \notin J_{-a-1} \iff \\ \iff f_\J(-a-1) = -a-1 \iff Rf_\J(a) = a+2n \, .
\end{gather*}

\noindent Finally, considering $b \in [2,2n]$ and therefore $\tilde{b}+1 \in [2,2n]$,

\begin{gather*}
    f_{R\J}(a) = 2n+a+1-b \iff \\ \iff 2n \in (R\J)_a \, \land \, (R\J)_{a+1} =s \bigl( (R\J)_a \mysetminus\{2n\} \bigr) \cup \{b\} \iff \\ 
    \iff 2n \in J_{-a-1} \, \land \, J_{-a} = s(J_{-a-1} \mysetminus\{2n\}) \cup \{\tilde{b}+1\} \iff \\ \iff f_\J(-a-1) = 2n+(-a-1)+1 -(\tilde{b}+1) \iff \\ \iff Rf_\J(a) = 2n+a+1-b \, .
\end{gather*}
\end{proof}

\begin{example}\label{ex:running3}
    Let us apply \eqref{eq:Rforpermutations} to the permutations $f_\J$ and $f_{\J'}$ from Example \ref{ex:running2}. We obtain
    \begin{align*}
        R(f_\J) = f_\J: 0& \mapsto 1 \quad 1 \mapsto 3 \quad 2 \mapsto 4 \quad 3 \mapsto 6 \\
        R(f_{\J'}): 0& \mapsto 1 \quad 1 \mapsto 3 \quad 2 \mapsto 6 \quad 3 \mapsto 4
    \end{align*}
    which agrees with Example \ref{ex:running1}, where we noticed that $\J$ is symplectic while $\J'$ is not. In Example \ref{ex:running2} we computed $f_{R(\J')}$, which indeed coincides with $R(f_{\J'})$. 
\end{example}

From Corollary \ref{cor:GNorder} we obtain:
\begin{corollary}
        The map $R \colon \mathcal{B}_{(n,2n)} \longrightarrow \mathcal{B}_{(n,2n)}$ is order-preserving.
\end{corollary}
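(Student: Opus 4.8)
The plan is to read this off directly from the commutative square assembled just above, together with Corollary \ref{cor:GNorder}. Write $\Phi \colon JP(n,2n) \longrightarrow \mathcal{B}_{(n, 2n)}$ for the bijection $\J \mapsto f_\J$ of \eqref{eq:boundedfromjuggling}; as recalled in the text, $\Phi$ is an order isomorphism, so both $\Phi$ and $\Phi^{-1}$ are order-preserving. To distinguish the two maps denoted $R$, write $R_{JP}$ for the one on $JP(n,2n)$ and $R_{\mathcal{B}}$ for the one on $\mathcal{B}_{(n, 2n)}$ defined by \eqref{eq:Rforpermutations}. Then Lemma \ref{lem:correctsymmetry} is precisely the statement that $\Phi \circ R_{JP} = R_{\mathcal{B}} \circ \Phi$, i.e. the square commutes, so that $R_{\mathcal{B}} = \Phi \circ R_{JP} \circ \Phi^{-1}$.

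Given this, the argument is a one-line conjugation computation. Suppose $f \le f'$ in $\mathcal{B}_{(n, 2n)}$, and set $\J = \Phi^{-1}(f)$, $\J' = \Phi^{-1}(f')$. Since $\Phi^{-1}$ is order-preserving we get $\J \le \J'$; applying Corollary \ref{cor:GNorder} gives $R_{JP}(\J) \le R_{JP}(\J')$; and applying $\Phi$, which is order-preserving, yields $R_{\mathcal{B}}(f) = \Phi\bigl(R_{JP}(\J)\bigr) \le \Phi\bigl(R_{JP}(\J')\bigr) = R_{\mathcal{B}}(f')$, as desired.

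There is essentially no obstacle here: the entire content has already been established, in Lemma \ref{lem:correctsymmetry} (identifying $R_{\mathcal{B}}$ with the transport of $R_{JP}$ along the order isomorphism $\Phi$) and in Corollary \ref{cor:GNorder} (order-preservation on the combinatorial side). The only point deserving a word is the use of the fact that $\Phi^{-1}$ is order-preserving, which is subsumed in the assertion — borrowed from \cite{ML1} — that the juggling-pattern/bounded-affine-permutation correspondence is a poset isomorphism. Should one wish to bypass even that, one could instead argue directly at the level of covering relations, writing $f' = g \circ t \circ \mathrm{id}_n$ with $t=(i,j)$ as in Lemma \ref{lem:mutatedpermutations} and checking that $R$ carries this to a covering relation by tracking the effect of \eqref{eq:Rforpermutations} on the reflection $t$; but the conjugation argument above is cleaner and is the intended one.
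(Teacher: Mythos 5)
Your proof is correct and is exactly the paper's intended argument: the paper states the corollary immediately after the sentence "From Corollary \ref{cor:GNorder} we obtain:", relying implicitly on the commuting square from Subsection \ref{subsec:sympl-permutations} (justified by Lemma \ref{lem:correctsymmetry}) and the order-isomorphism $JP(n,2n) \cong \mathcal{B}_{(n,2n)}$ from \cite{ML1}, all of which you spell out explicitly.
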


\begin{lemma}\label{lem:length}
    For any $(n, 2n)$-bounded affine permutation $f$, we have $\ell(f) = \ell(Rf)$.
\end{lemma}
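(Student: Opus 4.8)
The plan is to compare the inversion set of $Rf$ with that of $f$ directly, using the explicit formula \eqref{eq:Rforpermutations}. Recall that $\ell(f) = |L(f)|$ with $L(f)$ as in \eqref{eq:lengthset}. The starting observation is that $L(f)$ is nothing but a set of orbit representatives for the (infinite) set of all inversions $\mathrm{Inv}(f) = \{(x,y) \in \Z^2 \mid x < y,\ f(x) > f(y)\}$ under the diagonal shift $\rho \colon (x,y) \mapsto (x+2n,\, y+2n)$: since $f$ is affine, $\rho$ preserves $\mathrm{Inv}(f)$, and every orbit $\{\rho^t(x,y) : t \in \Z\}$ contains exactly one pair whose first coordinate lies in $[0,2n-1]$. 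Thus $\ell(f)$ equals the number of $\rho$-orbits in $\mathrm{Inv}(f)$, and it suffices to produce a bijection between the $\rho$-orbits of $\mathrm{Inv}(Rf)$ and those of $\mathrm{Inv}(f)$.

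First I would introduce the map $\psi$ on pairs $(x,y) \in \Z^2$ with $x<y$, given by $\psi(x,y) = (-y-1,\, -x-1)$; one checks immediately that $x < y \iff -y-1 < -x-1$ and that $\psi \circ \psi = \mathrm{id}$. Using $Rf(i) = 2n - f(-i-1) - 1$, a pair $(x,y)$ with $x < y$ satisfies $Rf(x) > Rf(y)$ if and only if $f(-x-1) < f(-y-1)$, that is, if and only if $\psi(x,y) = (-y-1,-x-1)$ is an inversion of $f$. Hence $\psi$ restricts to a bijection $\mathrm{Inv}(Rf) \to \mathrm{Inv}(f)$, with inverse given by the same formula with $f$ replaced by $Rf$ (here one uses $R(Rf) = f$, which follows at once from \eqref{eq:Rforpermutations}).

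Next I would record the compatibility with the shift: a direct computation gives $\psi\big(\rho(x,y)\big) = \rho^{-1}\big(\psi(x,y)\big)$, so $\psi$ sends each $\rho$-orbit of $\mathrm{Inv}(Rf)$ onto a $\rho$-orbit of $\mathrm{Inv}(f)$, and this induces a bijection on the sets of orbits. Counting orbits on both sides yields $\ell(Rf) = |L(Rf)| = |L(f)| = \ell(f)$.

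The only delicate point — and the step where a slip is easiest — is the orbit bookkeeping: one must be sure that $L(f)$ really parametrizes the $\rho$-orbits of $\mathrm{Inv}(f)$, so that $\ell$ is "$\rho$-invariant" in the right sense, and that the fact that $\psi$ reverses the direction of $\rho$ does not disturb the orbit count. If one prefers to avoid the orbit formulation, the same argument can be packaged as an explicit bijection $L(Rf) \to L(f)$ by following $\psi$ with the unique diagonal shift that returns the first coordinate to $[0,2n-1]$, at the cost of a slightly more fiddly bookkeeping. Note that nothing beyond the affineness of $f$ is used, so the identity in fact holds for every element of $A^0_{2n}$, not merely the bounded ones.
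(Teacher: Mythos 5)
Your argument is correct and is, at its core, the same as the paper's: the key map is $\psi(x,y) = (-y-1,-x-1)$, and the paper's bijection $\Phi\colon L(f)\to L(Rf)$ is exactly $\psi$ followed by the unique diagonal shift $\rho^t$ returning the first coordinate to $[0,2n-1]$, precisely as you remark at the end. The difference is one of packaging. The paper works directly with the set $L(f)$ and must therefore write $\Phi$ as a two-case piecewise formula, and it invokes boundedness of $f$ (via the chain of inequalities in \eqref{eq:ij-inequality}) to guarantee that the second coordinate of any pair in $L(f)$ lies in $[0,4n-1]$, so that only the shifts by $2n$ or $4n$ ever occur. Your orbit formulation bypasses this: once one observes that $L(f)$ is a transversal for the $\rho$-action on $\mathrm{Inv}(f)$ and that $\psi\circ\rho = \rho^{-1}\circ\psi$, no case split is needed and no boundedness hypothesis enters, so the identity $\ell(f)=\ell(Rf)$ is visibly valid for arbitrary elements of $A^0_{2n}$. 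This is a modest but genuine gain in clarity and generality over the paper's more computational proof. One small thing worth stating explicitly, since you flag it yourself as the delicate point: the reason $L(f)$ is a transversal for $\rho$ is simply that $\rho$ acts freely on $\Z^2$ by translation and each orbit meets the strip $\{x\in[0,2n-1]\}$ exactly once; combined with $\rho$-invariance of $\mathrm{Inv}(f)$ (which is immediate from $f(i+2n)=f(i)+2n$), this gives $\ell(f)=|\mathrm{Inv}(f)/\rho|$, and $\psi$ being a $\rho$-anti-equivariant bijection $\mathrm{Inv}(Rf)\to\mathrm{Inv}(f)$ then descends to a bijection of orbit sets regardless of the sign flip.
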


\begin{proof}
First, we know that $\ell(f)$ is the cardinality of the set $L(f)$, as defined in \eqref{eq:lengthset}. We claim that the following function is a bijection between $L(f)$ and $L(Rf)$:
\[\Phi \colon (x,y) \longmapsto \begin{cases}(2n-y-1, 2n-x-1) & \text{if} \, \, y \in [0,2n-1] \, ; \\ (4n-y-1, 4n-x-1) & \text{if} \, \, y \in [2n,4n-1] \, . \end{cases}\]
Since $f$ is bounded, i.e. satisfies $x \le f(x) \le x + 2n$ for all $x$, a pair $(x,y) \in L(f)$ must satisfy
\begin{equation}\label{eq:ij-inequality}
0 \le x < y \le f(y) < f(x) \le x+2n \le 4n-1 \, .
\end{equation}
We change sign and add $2n-1$ to all the terms of the chain, to obtain
\[
	2n-1 \ge 2n-x-1 > 2n-y-1 \ge 2n-f(y)-1 > 2n-f(x)-1 \ge -x-1 \ge -2n \, .
\]
Notice that the first entry of the pair $\Phi(x,y)$ is always in $[0, 2n-1]$ and is strictly lower than the second entry. Next we compare their images under $Rf$:
\begin{align*}
    Rf(2n-y-1) = 4n-f(y)-1 > 4n-f(x)-1 = Rf(2n-x-1) \qquad & \, \, \text{if} \, \, y \in [0,2n-1] \, ; \\
    Rf(4n-y-1) = 2n-f(y)-1 > 2n-f(x)-1 = Rf(4n-x-1) \qquad & \, \, \text{if} \, \, y \in [2n,4n-1] \, .
\end{align*}
Therefore $\Phi(x,y) \in L(Rf)$ whenever $(x,y) \in L(f)$. Lastly we observe that $\Phi \circ \Phi (x,y) = (x,y)$, so $\Phi$ is bijective.
\end{proof}

\begin{remark}
    By \cite[Lemma 3.2]{ours}, the image under $\tau$ of an affine cell $C_\J$ in $X(n,2n)$ is the affine cell $C_{R\J}$.
\end{remark}

\begin{lemma}\label{lem:tauforcells}
Let $\J$ be a symplectic $(n,2n)$-juggling pattern. Then the coordinates in $C_\J$ of a point in $C_\J^{sp}$ satisfy
\[u^{(a)}_{i,j} = (-1)^{i+j+1} \cdot u^{(-a)}_{\tilde{j}, \tilde{i}}\, .\]
\end{lemma}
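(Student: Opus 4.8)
The plan is to translate the condition ``$V$ lies in $C_\J^{sp}$'' into an explicit description of $V_a^\perp$ in the affine coordinates of $C_\J$, and then extract the coordinates of $V_{-a}$ from the identity $V_{-a}=V_a^\perp$. First I would recall, via Proposition \ref{prop:orbitintersection} applied to $x=p_\J$, that $C_\J^{sp}=C_\J\cap X(n,2n)^{sp}$, so a point $V=(V_i)_i\in C_\J^{sp}$ lies in $C_\J$ — hence carries coordinates $u^{(a)}_{i,j}$ with each $V_a$ having a basis of the form \eqref{eq:coordinatebasis} — and is $\tau$-fixed, i.e. $V_{-a}=V_a^\perp$ for all $a\in\Z_{2n}$. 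I would also note at the outset that, $\J$ being symplectic, $J_{-a}=R(J_a)=[2n]\mysetminus\{\tilde j:j\in J_a\}$, so that $j\in J_a\iff\tilde j\notin J_{-a}$, $i\notin J_a\iff\tilde i\in J_{-a}$, and $i>j\iff\tilde j>\tilde i$; this shows that the two sides of the asserted identity are indexed by corresponding admissible pairs under $i\mapsto\tilde i$, $j\mapsto\tilde j$.

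Next I would compute $V_a^\perp$ directly. Writing a generic vector over the vertex $-a$ as $w=\sum_m c_m e^{(-a)}_m$ and expanding the equations $(v^{(a)}_j,w)_\Omega=0$ for $j\in J_a$ by means of \eqref{eq:coordinatebasis} and the formula $(e_i,e_j)_\Omega=(-1)^{i+1}\delta_{j,\tilde i}$, one obtains
\[(-1)^{j+1}c_{\tilde j}+\sum_{\substack{i>j\\ i\notin J_a}}(-1)^{i+1}u^{(a)}_{i,j}\,c_{\tilde i}=0\qquad(j\in J_a).\]
Re-indexing with $k:=\tilde j$, which runs over $[2n]\mysetminus J_{-a}$, and $l:=\tilde i$, which runs over $J_{-a}$ with $i>j$ turning into $l<k$, and using $(-1)^{\tilde m+1}=(-1)^m$, each such equation rearranges to
\[c_k=\sum_{\substack{l\in J_{-a}\\ l<k}}(-1)^{k+l+1}u^{(a)}_{\tilde l,\tilde k}\,c_l\qquad(k\notin J_{-a}).\]
The key structural point is that every ``non-pivot'' coordinate $c_k$ is expressed \emph{directly} in terms of the ``pivot'' coordinates $(c_l)_{l\in J_{-a}}$, with no recursion among the $c_k$ themselves; hence $V_a^\perp$ is the graph of this linear map, the $c_l$ with $l\in J_{-a}$ are free, and $\dim V_a^\perp=n=\vert J_{-a}\vert$ as it should be.

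Then I would read off the reduced basis of $V_a^\perp$ relative to the pivot set $J_{-a}$ — which is legitimate because $V_a^\perp=V_{-a}$ is a point of $C_\J$ over the vertex $-a$ and therefore genuinely has $J_{-a}$ as pivot set. Setting $c_{l_0}=1$ and $c_l=0$ for the remaining $l\in J_{-a}$ produces the vector $e^{(-a)}_{l_0}+\sum_{k\notin J_{-a},\,k>l_0}(-1)^{k+l_0+1}u^{(a)}_{\tilde{l_0},\tilde k}\,e^{(-a)}_k$, whose support pattern is exactly that of $v^{(-a)}_{l_0}$ in \eqref{eq:coordinatebasis} (coefficient $\delta_{l,l_0}$ on $e_l$ for $l\in J_{-a}$, and $0$ on $e_k$ for $k\notin J_{-a}$, $k<l_0$). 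By uniqueness of the reduced basis of a subspace with a given pivot set, the coefficient of $e^{(-a)}_k$ must be $u^{(-a)}_{k,l_0}$, so $u^{(-a)}_{k,l_0}=(-1)^{k+l_0+1}u^{(a)}_{\tilde{l_0},\tilde k}$. Finally, substituting $k=\tilde i$ and $l_0=\tilde j$ and using $(-1)^{\tilde i+\tilde j+1}=(-1)^{i+j+1}$ (the two exponents differing by an even integer) turns this into the required $u^{(a)}_{i,j}=(-1)^{i+j+1}u^{(-a)}_{\tilde j,\tilde i}$.

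The hard part will not be conceptual but the careful bookkeeping: keeping the two distinct reindexings — $j\leftrightarrow\tilde j$ on pivot indices and $i\leftrightarrow\tilde i$ on non-pivot indices — straight, tracking the simultaneous reversal of all order relations they induce, and verifying the parity simplifications $(-1)^{\tilde m+1}=(-1)^m$ and $(-1)^{\tilde i+\tilde j+1}=(-1)^{i+j+1}$. The one genuinely structural step that needs care is the claim that the homogeneous linear system defining $V_a^\perp$ is ``triangular in a single step'', so that its reduced basis can be extracted without solving a nested recursion; this is precisely what lets the identification with the $u^{(-a)}$-coordinates be immediate, and it hinges on $\J$ being symplectic, so that $\{\tilde i:i\notin J_a\}=J_{-a}$.
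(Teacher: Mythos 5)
Your proof is correct, modulo one cosmetic slip in the final substitution: to recover $u^{(-a)}_{\tilde j,\tilde i}$ from your derived identity $u^{(-a)}_{k,l_0}=(-1)^{k+l_0+1}u^{(a)}_{\tilde l_0,\tilde k}$ you should set $k=\tilde j$ and $l_0=\tilde i$ (you wrote the reverse), after which the involutivity of $m\mapsto\tilde m$ and the parity identity $(-1)^{\tilde i+\tilde j+1}=(-1)^{i+j+1}$ close the argument.

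Your route differs from the paper's. The paper does not solve for $V_a^\perp$; instead it \emph{postulates} a point $W\in C_{R\J}$ whose affine coordinates are $(-1)^{x+y+1}u^{(-b)}_{\tilde y,\tilde x}$, and verifies $W=\tau(V)$ by showing that the generators $v^{(a)}_j$ of $V_a$ and $w^{(-a)}_i$ of $W_{-a}$ pair to zero under $\Omega$ --- a direct four-term cancellation after expanding by bilinearity. Your proof instead extracts the linear system cutting out $V_a^\perp$ from $(v^{(a)}_j,w)_\Omega=0$, observes that it is one-step triangular (each non-pivot coordinate $c_k$ is expressed directly in the pivot coordinates $c_l$, with no recursion among the $c_k$), and reads off the reduced basis, appealing to uniqueness of that basis with pivot set $J_{-a}$. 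The underlying computation --- expanding $(\cdot,\cdot)_\Omega$ on basis vectors and tracking the parity $(-1)^{\tilde m+1}=(-1)^m$ --- is identical; the difference is that you \emph{derive} the formula where the paper \emph{verifies} it. Your version buys freedom from having to guess the answer, at the cost of the extra (correct, and worth stating as explicitly as you did) observation that the system does not feed the $c_k$'s back into one another. The paper's bilinear check is slightly leaner, since it never needs to exhibit $V_a^\perp$ as a graph or invoke uniqueness of the reduced basis; orthogonality of the two spanning sets plus a dimension count does all the work.
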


\begin{proof}
    Suppose first that $\J$ is any $(n,2n)$-juggling pattern. Let $V=(V_a)_a \in C_\J$ with affine coordinates $\bigl( u^{(a)}_{i,j} \, \vert \, a \in \Z_{2n}$, $j \in J_a , j<i \notin J_a \bigr)$. We claim that the affine coordinates of $\tau(V) \in C_{R\J}$ are
    \[\Bigl( (-1)^{x+y+1} \cdot u^{(-b)}_{\tilde{y}, \tilde{x}} \, \vert \, b \in \Z_{2n}, y \in (R\J)_b, y<x \notin (R\J)_b \Bigr).\]
    Such coordinates define a point in $X(n,2n)$ which sits inside $C_{R\J}$; let us call it $W$. We want to prove $W=\tau(V)$ by showing that the generators for $V$ and for $W$ defined in \eqref{eq:coordinatebasis} pair trivially via $\Omega$. Fix a vertex $a$ and two elements $j \in J_a, i \in (R\J)_{-a}$. The symplectic form evaluated in $v_j^{(a)}$, the $j$-th generator of $V_a$, and in $w_i^{(-a)}$, the $i$-th generator of $W_{-a}$, is
    \begin{gather*}
        (v_j^{(a)}, w_i^{(-a)})_\Omega = \bigl( \, e^{(a)}_j + \sum_{\substack{x>j \\ x \notin J_a}} u^{(a)}_{x,j} e^{(a)}_{x} \, , \,  e^{(-a)}_i + \sum_{\substack{y>i \\ y \notin (R\J)_{-a}}} (-1)^{i+y+1} \cdot u^{(a)}_{\tilde{i},\tilde{y}} e^{(-a)}_{y} \, \bigr)_\Omega = \\ = (e_j^{(a)} + r_j, e_i^{(-a)} + r_i)_\Omega
    \end{gather*}
    where we denoted $v_j^{(a)}-e_j^{(a)}$ by $r_j$ and $w_i^{(-a)}-e_i^{(-a)}$ by $r_i$. By bilinearity of $(\cdot,\cdot)_\Omega$, it equals
    \[(e_j^{(a)},e_i^{(-a)})_\Omega +(e_j^{(a)},r_i)_\Omega +(r_j,e_i^{(-a)})_\Omega +(r_j,r_i)_\Omega \, .\]
    The first summand is zero since $j \in J_a$ and $i \in R(J_a)$. The second term can be nonzero only if $\tilde{j}$ appears as one of the indices $y$, i.e. if and only if $\tilde{j} >i$, in which case it is equal to $(-1)^{j+1}(-1)^{i+j} \cdot u^{(a)}_{\tilde{i}, j} = (-1)^{i+1} \cdot u^{(a)}_{\tilde{i}, j}$. The same happens for the third term: it is again nonzero only if $\tilde{j} >i$, when it is equal to $(-1)^i \cdot u^{(a)}_{\tilde{i},j}$. Therefore they are opposites and cancel out. Lastly, the fourth term is zero because the basis vectors $e^{(a)}_x$ with $x \notin J_a$ pair trivially with the vectors $e^{(-a)}_y$ for $y \notin R(J_a)$. We conclude that $W = \tau(V)$. If, in addition, $\J$ is symplectic and $V = \tau(V)$, we obtain the desired equations.
\end{proof}

\begin{corollary}\label{cor:symplcoordinates}
    If $\J$ is symplectic, then the affine coordinates for $C^{sp}_\J$ are parametrized by equivalence classes of the same triples $(a,i,j)$ as before, but under the following equivalence relation:
    \[
    (a,i,j) \overset{sp}{\sim} (a+1,i+1,j+1) \quad \land \quad (a,i,j) \overset{sp}{\sim} (-a, \tilde{j}, \tilde{i}) \, .\]
\end{corollary}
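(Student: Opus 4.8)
The plan is to obtain the coordinate description of $C_\J^{sp}$ directly from Lemma \ref{lem:tauforcells} and the orbit--intersection identity \eqref{eq:goodorbits1}. Since $\J$ is symplectic we have $R\J=\J$, so $\tau$ restricts to an involution of the affine cell $C_\J$, and $C_\J^{sp}=C_\J\cap X(n,2n)^{sp}$ is exactly its fixed locus. In the affine coordinates of $C_\J$ from \cite[Theorem 5.7]{LaniniPuetz}, which are indexed by the $\sim$-classes of triples $(a,i,j)$, the computation in the proof of Lemma \ref{lem:tauforcells} shows that this involution is the linear map
\[\bigl(u^{(a)}_{i,j}\bigr)\longmapsto\bigl((-1)^{i+j+1}\,u^{(-a)}_{\tilde j,\tilde i}\bigr),\]
so that $C_\J^{sp}$ is the linear subspace of $C_\J$ cut out by the equations $u^{(a)}_{i,j}=(-1)^{i+j+1}u^{(-a)}_{\tilde j,\tilde i}$. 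What remains is to identify this linear system with the passage to the coarser equivalence relation $\overset{sp}{\sim}$.

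The first step is to check that $(a,i,j)\mapsto(-a,\tilde j,\tilde i)$ is a well-defined involution on the index set of valid triples: using $J_{-a}=R(J_a)$ one gets $j\in J_a\Leftrightarrow\tilde j\notin J_{-a}$, $i\notin J_a\Leftrightarrow\tilde i\in J_{-a}$ and $i>j\Leftrightarrow\tilde j>\tilde i$, while $\tilde{\tilde i}=i$. The second step is that both this map and the scalar $(-1)^{i+j+1}$ descend to $\sim$-classes: the shift $(a,i,j)\mapsto(a+1,i+1,j+1)$ carries the symplectic partner $(-a,\tilde j,\tilde i)$ to $(-a-1,\tilde j-1,\tilde i-1)$, which is again one shift because $\widetilde{j+1}=\tilde j-1$ and $\widetilde{i+1}=\tilde i-1$, and the exponent $i+j+1$ is unchanged modulo $2$. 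It follows that the displayed involution of coordinate space genuinely permutes the coordinates, up to sign, according to an involution $\phi$ of the set of $\sim$-classes, whose orbits are by construction exactly the $\overset{sp}{\sim}$-classes; its fixed subspace therefore carries one free coordinate for each such orbit, taken from a two-element orbit by choosing a representative.

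The point that needs care — and which I expect to be the only real obstacle — is the $\phi$-fixed classes, i.e.\ those with $(a,i,j)\sim(-a,\tilde j,\tilde i)$: there the equation collapses to $u=(-1)^{i+j+1}u$, which would force $u=0$, and destroy the claimed parametrization, if the sign were $-1$. To rule this out, suppose $(-a,\tilde j,\tilde i)=(a+t,i+t,j+t)$ for an integer $t$, with the first entry read modulo $2n$; adding the second and third coordinate equalities gives $t=2n+1-(i+j)$, and then the first gives $i+j\equiv 1+2a\pmod{2n}$, so $i+j$ is odd and $(-1)^{i+j+1}=1$, making the equation vacuous. Consistency on the two-element $\phi$-orbits is automatic since $\tau^2=\mathrm{id}$ forces $(-1)^{i+j+1}(-1)^{\tilde i+\tilde j+1}=1$, which indeed holds because $\tilde i+\tilde j=4n+2-i-j$. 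Combining these observations, $C_\J^{sp}$ is an affine space whose coordinates are freely indexed by the $\overset{sp}{\sim}$-equivalence classes of triples $(a,i,j)$, which is the assertion.
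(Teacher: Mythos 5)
Your proof is correct and follows the route the paper implicitly takes: the corollary is a direct consequence of Lemma~\ref{lem:tauforcells}, which describes $\tau$ on $C_\J$ in affine coordinates, together with the fact that $C_\J^{sp}$ is its fixed locus. You go one step beyond the paper by explicitly verifying the one nontrivial point — that on a $\sim$-class that is paired to itself under $(a,i,j)\mapsto(-a,\tilde j,\tilde i)$ the sign $(-1)^{i+j+1}$ is necessarily $+1$, so the constraint never forces a coordinate to vanish — and your parity computation ($t=2n+1-(i+j)$ together with $t\equiv -2a\bmod 2n$ forces $i+j$ odd) is correct; this is exactly the verification the corollary tacitly relies on.
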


The next definition coincides with the one given by Karpman \cite[Section 3]{karpman}.

\begin{definition}
    A bounded affine permutation $f \in \mathcal{B}_{(n,2n)}$ is called \emph{symplectic} if $Rf=f$, or equivalently if
        \begin{equation}\label{eq:symplecticpermutation}
        f(2n-i-1) - (2n-i-1) + f(i)-i = 2n \quad \forall \, i \in [0,2n-1] \, .
    \end{equation}
\end{definition}

\begin{remark}
    By Lemma \ref{lem:correctsymmetry}, an $(n,2n)$-juggling pattern is symplectic if and only if its corresponding bounded affine permutation is.
\end{remark}

Since $R$ extends without issue to $A^n_{2n}$ and its image is again $A^n_{2n}$, if we precompose it with id$_{n}$ and postcompose with id$_{-n}$ we find the group automorphism $R^0 \colon A^0_{2n} \longrightarrow A^0_{2n}$ given by
\[(R^0g)(a) = -g(-a-1)-1 \, ,\]
which has order 2. The image of the affine reflection $(i,j)$ under $R^0$ is $(-j-1, -i-1)$, so in particular
\[R^0 (s_i) = s_{-i-2} \, \, .\]
The fixed simple reflections are $s_{-1}$ and $s_{n-1}$. A permutation and its image commute if and only if their product is $R^0$-fixed, so for any $i \in \Z_{2n} \mysetminus \{\overline{-1},\overline{n-1}\}$, $s_i$ and $s_{-i-2}$ are not fixed but their product is.

\begin{definition}
    We define the subgroup
    \[C^0_{n+1} \coloneqq \{g \in A^0_{2n} \, \vert \, R^0(g)=g\} \, .\]
\end{definition}

\begin{prop}\label{prop:affinesymmetry}
$C^0_{n+1}$ is a Coxeter group of affine type $C^{(1)}_n$, with simple reflections $r_{-1} \coloneqq s_{-1}$, $r_{n-1} \coloneqq s_{n-1}$ and $r_i = s_i \circ s_{-i-2}$ for $i \in [0,n-2]$. We denote by $S_C$ this set of generators.
\end{prop}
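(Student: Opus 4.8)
The plan is to recognise the automorphism $R^0$ as the one induced on the Coxeter system $(A^0_{2n},S)$ by the order-two symmetry $s_i\mapsto s_{-i-2}$ of the affine Dynkin diagram of type $A^{(1)}_{2n-1}$, i.e. of the $2n$-cycle with nodes $s_0,\dots,s_{2n-1}$ read modulo $2n$ (this identification is already recorded in the paragraph preceding the statement, where $R^0(s_i)=s_{-i-2}$ is computed). Once this is in place, the proposition follows from the general description of the fixed subgroup of a diagram automorphism of a Coxeter group, together with an explicit computation of the resulting Coxeter matrix; this is in essence Karpman's argument \cite{karpman}, which I would spell out as follows.

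First I would list the $R^0$-orbits on $S$. One has $s_i=s_{-i-2}$ exactly when $2i+2\equiv 0\pmod{2n}$, i.e. $i\in\{n-1,2n-1\}$, so the fixed simple reflections are precisely $s_{-1}$ and $s_{n-1}$, and every other orbit has the form $\{s_i,s_{-i-2}\}$ with $i\in[0,n-2]$. Moreover $s_i$ and $s_{-i-2}$ always commute: their indices differ by $2i+2$, which is even and hence never $\equiv\pm1\pmod{2n}$, so the two nodes are non-adjacent in the cycle. Therefore every $R^0$-orbit $O\subseteq S$ generates a finite standard parabolic subgroup $W_O$ of $A^0_{2n}$ — of type $A_1$ when $O$ is $\{s_{-1}\}$ or $\{s_{n-1}\}$, and of type $A_1\times A_1$ otherwise — whose longest element is exactly $r_{-1}=s_{-1}$, $r_{n-1}=s_{n-1}$, or $r_i=s_i s_{-i-2}$. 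Since every orbit generates a finite parabolic, the theorem on fixed subgroups of Coxeter-system automorphisms applies and gives that $(A^0_{2n})^{R^0}=C^0_{n+1}$ is a Coxeter group with simple system $S_C=\{r_{-1},r_0,\dots,r_{n-2},r_{n-1}\}$; note that it is part of this conclusion — and not evident a priori — that these elements already generate the whole fixed subgroup.

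It then remains to identify the Coxeter diagram, i.e. to compute the order of $r_a r_b$ for every pair of generators, which can be done inside the parabolic of $A^0_{2n}$ spanned by the two orbits (a product of finite type $A$ groups, for $n\ge 2$), by reading off which nodes of the $2n$-cycle are adjacent. I would check, case by case:
\begin{itemize}
    \item $\{s_{-1},s_0,s_{-2}\}$ spans a parabolic of type $A_3$ with $s_{-1}$ as the middle node, and likewise $\{s_{n-2},s_{n-1},s_n\}$ (note $s_{-n}=s_n$) spans a type-$A_3$ parabolic with $s_{n-1}$ in the middle; in a Coxeter group of type $A_3$ the product of the central reflection with the product of the two (commuting) end reflections is a $4$-cycle, so $m(r_{-1},r_0)=m(r_{n-2},r_{n-1})=4$;
    \item for $0\le i\le n-3$ the orbits $\{s_i,s_{-i-2}\}$ and $\{s_{i+1},s_{-i-3}\}$ span a parabolic of type $A_2\times A_2$, and commuting $s_{-i-2}$ past $s_{i+1}$ gives $r_i r_{i+1}=(s_i s_{i+1})(s_{-i-2}s_{-i-3})$, a product of two commuting $3$-cycles, so $m(r_i,r_{i+1})=3$;
    \item for every remaining pair of orbits the two supports are disjoint and pairwise non-adjacent in the cycle, so $r_a$ and $r_b$ commute and $m(r_a,r_b)=2$.
\end{itemize}
This is exactly the Coxeter diagram of affine type $C^{(1)}_n$: the path $r_{-1}-r_0-\dots-r_{n-2}-r_{n-1}$ on $n+1$ vertices whose two extremal edges are labelled $4$ and whose internal edges are labelled $3$. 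Hence $(C^0_{n+1},S_C)$ is of affine type $C^{(1)}_n$.

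The main conceptual ingredient — that the fixed subgroup of a Coxeter-system automorphism is again a Coxeter group, generated by the longest elements of the orbit-parabolics — is imported from the literature, so it is not the obstacle here; the real work is the bookkeeping of adjacencies in the $2n$-cycle, where some attention is needed for the small cases, in particular $n=2$, for which $r_0=r_{n-2}$ and the diagram degenerates to the three-vertex path with both edges labelled $4$. I would present these computations compactly rather than belabour each pair.
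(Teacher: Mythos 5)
Your proposal is correct, but it takes a genuinely different route from the paper on the hardest part. The paper proves directly, by induction on $\ell(g)$ and a careful use of the exchange condition, that $S_C$ generates $C^0_{n+1}$, and then argues somewhat informally that there are no extra relations (``any non-Coxeter relation in type $C$ would imply one in type $A$''). You instead recognize $R^0$ as the diagram automorphism $s_i\mapsto s_{-i-2}$ of the $2n$-cycle, observe that every orbit on $S$ is either a fixed node or a pair of non-adjacent nodes (the parity argument: $2i+2$ is even while $\pm1 \bmod 2n$ is odd), and then invoke the general theorem (Steinberg/H\'ee/M\"uhlherr) that the fixed subgroup of a Coxeter system under a diagram automorphism whose orbits generate finite parabolics is itself a Coxeter group, generated by the longest elements of those parabolics. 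This offloads both the generation statement and the ``no extra relations'' statement to a single black-box theorem, which is more rigorous than the paper's closing sentence, at the cost of importing that theorem. After that, the two proofs converge: both compute the Coxeter matrix by examining adjacencies in the cycle, obtaining $m(r_{-1},r_0)=m(r_{n-2},r_{n-1})=4$, $m(r_i,r_{i+1})=3$ for $0\le i\le n-3$, and $2$ otherwise. Your parabolic-subgroup phrasing of the order computations (type $A_3$ with the fixed node in the middle gives a $4$-cycle, type $A_2\times A_2$ gives commuting $3$-cycles) is a clean way to package the same calculation the paper does by tracking where integers get sent. The one thing worth adding if you keep the black-box route is an explicit citation for the fixed-subgroup theorem, since the paper itself does not rely on it.
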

\begin{proof}
    First we prove that $S_C$ generates the fixed-point subgroup: we do so by showing that any $g \in C^0_{n+1}$ has an expression in terms of the $r_i$, by induction on the length of $g$. For $\ell(g)=0$ or 1, $g$ is either the identity or a simple reflection fixed by $R^0$, i.e. $g = s_{-1} = r_{-1}$ or $g=s_{n-1}=r_{n-1}$. If $\ell(g)=2$ then $g=s_i \circ s_j$ for some $i,j \in [0,2n-1]$. Since $g = R^0g = s_{-i-2} \circ s_{-j-2}$, then either $s_i=s_{-i-2}$ and $s_j=s_{-j-2}$, which implies $g=s_{-1} \circ s_{n-1}=r_{-1}\circ r_{n-1}$, or $s_j=s_{-i-2}$ so $g=r_i$. Now for the inductive step, let $g=u \circ s_i$ be a minimal length expression for $g$, that is, $\ell(u)=\ell(g)-1$. Then $g=R^0g=R^0u \circ s_{-i-2}$ is another minimal length expression for $g$. If $i \in \{-1, n-1\}$, then $s_i = r_i$ and $u = R^0u$, then by induction $u$ is the product of elements in $S_C$, therefore $g$ is as well. Otherwise, $s_{-i-2} \ne s_i$ therefore, by the exchange condition, there exists a word $v$ of length $\ell(g)-2$ such that $g = v \circ s_i \circ s_{-i-2}$, so we conclude by applying the inductive hypothesis to $v$ since it is also $R^0$-fixed.

    \smallskip
    
    The order of $r_{-1} \circ r_0=s_{-1} \circ s_0s_{-2}$ is 4, since $-2 \mapsto 0$, $0 \mapsto 1$, $1 \mapsto -1$ and $-1 \mapsto -2$. For a similar reason $r_{n-2} \circ r_{n-1}$ has order 4 as well. For $i \in [0, n-3]$, the order of $r_i \circ r_{i+1} =s_i \circ s_{-i-2} \circ s_{i+1} \circ s_{-i-3} = (s_i \circ s_{i+1} ) \circ (s_{-i-3} \circ s_{-i-2})$ is the least common multiple of the orders of the two commuting factors, which is 3. Lastly, for $\vert i - j \vert \ge 2$, we have $r_i \circ r_j = r_j \circ r_i$, since $s_i$, $s_{-i-2}$, $s_j$ and $s_{-j-2}$ all commute with one another by the type $A$ Coxeter relations. There is no other relation in $C^0_{n+1}$ that does not stem from these, because simple reflections in type $C$ are either simple reflections in type $A$ or product of two commuting simple reflections, therefore any non-Coxeter relation in type $C$ would imply one in type $A$.
\end{proof}

\begin{corollary}\label{cor:transpositionsintypeC}
    Transpositions in this Coxeter system are either reflections $(i,-i-1)$ or pairs of reflections of the form $(i,j)(-j-1,-i-1)$.
\end{corollary}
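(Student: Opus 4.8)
The plan is to combine three ingredients: the standard fact that the reflections of a Coxeter system $(W,S)$ are exactly the conjugates $wrw^{-1}$ with $w\in W$ and $r\in S$; the explicit list of simple generators $S_C$ from Proposition \ref{prop:affinesymmetry}; and the conjugation rule $f\cdot(i,j)\cdot f^{-1}=(f(i),f(j))$ recorded above. The first step is to write each element of $S_C$ in reflection notation inside $A^0_{2n}$. For the two fixed generators, $r_{-1}=s_{-1}=(-1,0)$ and $r_{n-1}=s_{n-1}=(n-1,n)$; since $0\equiv-(-1)-1$ and $n\equiv-(n-1)-1\pmod{2n}$, both are single reflections of the claimed shape $(i,-i-1)$. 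For $i\in[0,n-2]$ one has $r_i=s_i s_{-i-2}=(i,i+1)(-i-2,-i-1)$, and since $-i-2=-(i+1)-1$ this already has the shape $(i,j)(-j-1,-i-1)$ with $j=i+1$; a short residue check shows that for $i$ in this range the four classes $i,i+1,-i-2,-i-1$ are pairwise distinct modulo $2n$, so the two transpositions are disjoint.

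The second step is to conjugate these by an arbitrary $w\in C^0_{n+1}$ and verify the shape persists. The key point is that $R^0(w)=w$ is, after unwinding $R^0(w)(a)=-w(-a-1)-1$, equivalent to the identity $w(-m-1)=-w(m)-1$ for every $m\in\Z$. Hence $w\cdot(i,-i-1)\cdot w^{-1}=(w(i),w(-i-1))=(w(i),-w(i)-1)$, again of the form $(c,-c-1)$, and it is a genuine reflection since $2c\equiv-1\pmod{2n}$ has no solution. Likewise $w\cdot(i,i+1)(-i-2,-i-1)\cdot w^{-1}=(w(i),w(i+1))\cdot(-w(i+1)-1,-w(i)-1)$, of the form $(a,b)(-b-1,-a-1)$ with $a=w(i)$, $b=w(i+1)$, the two factors staying disjoint because $w$ is a bijection. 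Letting $w$ range over $C^0_{n+1}$ and $r$ over $S_C$ exhausts the reflections of the Coxeter system, which is the claim.

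There is no serious obstacle here; the only care needed is bookkeeping — correctly matching each simple reflection to its affine-reflection form, and applying the reversal $(i,j)\mapsto(-j-1,-i-1)$ consistently so that the base shapes and their $w$-conjugates line up — together with the elementary residue computation ensuring the two transpositions in a product $(i,j)(-j-1,-i-1)$ never collide. One may optionally remark, though it is not needed for the statement, that conjugation inside $C^0_{n+1}$ cannot interchange the two families, which is why the fixed simple reflections $r_{-1},r_{n-1}$ and the products $r_i=s_is_{-i-2}$ give rise to the two distinct types of reflections.
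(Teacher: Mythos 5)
Your proof is correct and is the natural argument the paper is implicitly invoking: the corollary is stated as an immediate consequence of Proposition \ref{prop:affinesymmetry}, with the reflections obtained by conjugating the explicit simple generators $r_{-1}=(-1,0)$, $r_{n-1}=(n-1,n)$, $r_i=(i,i+1)(-i-2,-i-1)$ by $R^0$-fixed affine permutations, and your use of the identity $w(-m-1)=-w(m)-1$ for $w\in C^0_{n+1}$ together with $w\cdot(i,j)\cdot w^{-1}=(w(i),w(j))$ is exactly the mechanism that makes the two shapes stable under conjugation. The residue checks (that $(c,-c-1)$ is never trivial because $2c+1$ is odd, and that the two factors in $(a,b)(-b-1,-a-1)$ stay disjoint because $w$ permutes residues mod $2n$) are the right small verifications to include.
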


\section{Torus action} \label{sec:torusaction1}

Quiver Grassmannians for nilpotent representations of the equioriented cycle, such as the varieties $X(k,N)$, are equipped with the action of certain algebraic tori \cite{LaniniPuetz}. In this section we will recall such an action and see that it provides information about the $G$-orbit closure inclusion order.

\begin{definition}
    Given a quiver $Q$ and a $Q$-representation $M$, a \emph{basis} $B$ for it is a collection of bases $B_i$ for each of its vector spaces $M_i$, $i \in Q_0$. The \emph{coefficient quiver} $Q(M,B)$ is the quiver whose vertices are the elements of $B$, with an arrow between $b \in B_i$ and $b' \in B_j$ if there is an arrow $\alpha \colon i \longrightarrow j \in Q_1$ such that the coefficient of $b'$ for the expression of $M_\alpha (b)$ in the basis $B_j$ is nonzero.
\end{definition}

Let us consider the coefficient quiver $Q(U_{[N]},SB)$ for the standard basis $SB$ of $\C^N$ on every vertex, i.e. $SB_i= \{e^{(i)}_j\}$. For $N=4$, the coefficient quiver is as follows.

\begin{center}
    \begin{tikzpicture}
     \begin{scope}[every node/.style={circle, draw=black!100, fill=black!100, 
    very thin, inner sep = 0pt, minimum size = 1.5mm}]
    \node[fill=pink!100, draw=pink!100] (01) at (0,2) {};
    \node[fill=pink!100, draw=pink!100] (02) at (0,1.5) {};
    \node[fill=pink!100, draw=pink!100] (03) at (0,1) {};
    \node[fill=pink!100, draw=pink!100] (04) at (0,0.5) {};
    \node (11) at (2,0) {};
    \node (12) at (1.5,0) {};
    \node (13) at (1,0) {};
    \node (14) at (0.5,0) {};
    \node (21) at (0,-2) {};
    \node (22) at (0,-1.5) {};
    \node (23) at (0,-1) {};
    \node (24) at (0,-0.5) {};
    \node (31) at (-2,0) {};
    \node (32) at (-1.5,0) {};
    \node (33) at (-1,0) {};
    \node (34) at (-0.5,0) {};
\end{scope}

\begin{scope}[every edge/.style= 
              {draw=black,thick}]
\path [->] (01) edge[bend left=40] (12);
\path [->] (02) edge[bend left=40] (13);
\path [->] (03) edge[bend left=40] (14);
\path [->] (11) edge[bend left=40] (22);
\path [->] (12) edge[bend left=40] (23);
\path [->] (13) edge[bend left=40] (24);
\path [->] (21) edge[bend left=40] (32);
\path [->] (22) edge[bend left=40] (33);
\path [->] (23) edge[bend left=40] (34);
\path [->] (31) edge[bend left=40] (02);
\path [->] (32) edge[bend left=40] (03);
\path [->] (33) edge[bend left=40] (04);

\end{scope}

    \end{tikzpicture}
    \end{center}
In such a diagram, the $N$ vertices in the topmost line (in pink) are the basis vectors for $U_{[N]}^{(0)}$, then the next line clockwise is the basis for $U_{[N]}^{(1)}$, and so on. Moreover, the vectors of each basis are ordered from the outside in. We denote by $b_{j,p}$ the $p$-th element of the segment ending with $e^{(j)}_N$, that is, $b_{j,p} = e^{(j+p)}_p$. In this case we say that the segment ends at vertex $j$ in $\Delta_N$. Here $p \in [N]$ and we consider the representative of the class $j$ that lies in $[0, N-1]$.

\begin{center}
    \begin{tikzpicture}
     \begin{scope}[every node/.style={circle, draw=black!100, fill=black!100, 
    very thin, inner sep = 0pt, minimum size = 1.5mm}]
    \node[fill=pink!100, draw=pink!100] (01) at (0,2) {};
    \node (02) at (0,1.5) {};
    \node (03) at (0,1) {};
    \node (04) at (0,0.5) {};
    \node (11) at (2,0) {};
    \node[fill=pink!100, draw=pink!100] (12) at (1.5,0) {};
    \node (13) at (1,0) {};
    \node (14) at (0.5,0) {};
    \node (21) at (0,-2) {};
    \node (22) at (0,-1.5) {};
    \node[fill=pink!100, draw=pink!100] (23) at (0,-1) {};
    \node (24) at (0,-0.5) {};
    \node (31) at (-2,0) {};
    \node (32) at (-1.5,0) {};
    \node (33) at (-1,0) {};
    \node[fill=pink!100, draw=pink!100] (34) at (-0.5,0) {};
\end{scope}

\begin{scope}[every edge/.style= 
              {draw=black,thick}]
\path [->] (01) edge[bend left=40,draw=pink] (12);
\path [->] (02) edge[bend left=40] (13);
\path [->] (03) edge[bend left=40] (14);
\path [->] (11) edge[bend left=40] (22);
\path [->] (12) edge[bend left=40,draw=pink] (23);
\path [->] (13) edge[bend left=40] (24);
\path [->] (21) edge[bend left=40] (32);
\path [->] (22) edge[bend left=40] (33);
\path [->] (23) edge[bend left=40,draw=pink] (34);
\path [->] (31) edge[bend left=40] (02);
\path [->] (32) edge[bend left=40] (03);
\path [->] (33) edge[bend left=40] (04);

\end{scope}
    \end{tikzpicture}
    
    The segment ending at vertex 3.
    \end{center}
Given an integer-valued function $\mathbf{wt}$ on a basis $B$ of a $Q$-representation $M$, called a \emph{grading} or \emph{weight function}, there is a corresponding $\C^*$-action on the total space
\[\bigoplus_{i \in Q_0} M_i\]
of $M$ given on the basis by
\[z \cdot b \coloneqq z^{\mathbf{wt}(b)} b \, .\]
This action, under appropriate assumptions, extends to quiver Grassmannians for $M$ \cite{Pue25}. Our grading of choice is
\[\mathbf{wt} \colon b_{j,p} \longmapsto p \, ,\]
which meets the requirements to be \emph{attractive} \cite[Definition 1.17]{Jaco}, i.e. it satisfies
\[\mathbf{wt}(e^{(i)}_m) < \mathbf{wt}(e^{(i)}_{m'})\]
whenever $m < m'$, and the weight increases by the same amount from the source to the target of any arrow of the coefficient quiver. In this case we have $\mathbf{wt}(b_{j,p+1}) = \mathbf{wt}(b_{j,p})+1$ so the increase is equal to 1. The action on $M$ corresponding to an attractive grading extends to $\Gr_{\ubar{d}}(M)$ for any dimension vector $\ubar{d}$ \cite{LaniniPuetz, giovanni, Pue20}. Thus we have the following $\C^*$-action on $X(k,N)$:
\begin{equation}\label{eq:oldC*action}
	z \cdot b_{j,p} = z^p b_{j,p} \, \, .
\end{equation}
Its fixed points are the coordinate points $p_\J$, and its attracting set
\[\{ V \in X(k,N) \, \vert \, \lim_{z \rightarrow 0} z \cdot V = p_\J \}\]
to a fixed point $p_\J$ is the $G$-orbit $C_\J$ \cite{ML1}.
Let us also recall from \cite{LaniniPuetz} another action: let $T$ be an $(N+1)$-dimensional algebraic torus whose elements we denote by $t=(z, \gamma_0, \gamma_1, \dots, \gamma_{N-1})$. The $T$-action on the basis is given by
\[t \cdot b_{j,p} \coloneqq z^p \gamma_j b_{j,p}\]
and, once again, it extends to the variety. Its fixed points are also precisely the points $p_\J$ \cite[Theorem 5.14]{LaniniPuetz}. We refer to \cite{Pue20} for further details. This action makes $X(k,N)$ into a GKM variety: we give a definition equivalent, for our case, to the one given in the seminal paper \cite{GKM}. See also \cite{LaniniPuetz}.

\begin{definition}
Let $X$ be a projective variety and $T$ an algebraic torus acting on $X$. Then $X$ is a \emph{GKM variety} if
\begin{itemize}
    \item the $T$-action on $X$ is skeletal, i.e. it has finitely many fixed points and 1-dimensional orbits;
    \item the rational cohomology of $X$ vanishes in odd degree.
\end{itemize}
\end{definition}

Since $X(k,N)$ is GKM, then the $T$-action is \emph{locally linearizable}, i.e. for any 1-dimensional orbit $O$ there exists a $T$-action on $\C\P^1$ and a $T$-equivariant isomorphism $\overline{O} \cong \C\P^1$. Thus, the closure of each 1-dimensional orbit is isomorphic to $\C\P^1$ and the boundary consists of two 0-dimensional orbits, i.e. fixed points. Notice that $t = (z, \gamma_0, \gamma_1, \dots, \gamma_{N-1}) \in T$ decomposes as
\[(z, 1, 1, \dots, 1) \cdot (1, \gamma_0, \gamma_1, \dots, \gamma_{N-1}) \, .\]
The first factor acts as $z \in \C^*$ on $X(k,N)$ while the latter, when written in the standard basis, becomes a tuple of diagonal matrices that form an element of $G$. So the affine cells are $T$-stable. In particular, each 1-dimensional orbit is contained in a cell $C_\J$, so the two fixed points $p_\J$ and $p_{\J'}$ it connects must be comparable in the $G$-orbit closure inclusion order.
\begin{definition}
On the level of juggling patterns, we say that $\J$ and $\J'$, are connected by a \emph{mutation} $\mu \colon \J' \longrightarrow \J$ if $\J' \ge \J$ and $p_{\J'}$ and $p_\J$ are the closure points of a 1-dimensional $T$-orbit.
\end{definition}

In a Coxeter system $(W,S)$, two group elements $w$ and $w'$ are in Bruhat order relation if there exists a sequence of reflections, i.e. conjugates of simple reflections, $t_1, t_2, \dots, t_m$ such that $w' = w \cdot t_1 \cdot t_2 \cdots t_m$ and $\ell(w') > \ell(w)$. When $m=1$, such a relation will be called \emph{elementary}. By \cite[Remark 6.4]{ML1}, the mutations correspond to elementary Bruhat relations, meaning that there exists a mutation $\mu \colon \J' \longrightarrow \J$ if and only if $f_{\J'} \circ \text{id}_{-n} \ge f_\J \circ \text{id}_{-n}$ and they differ by a reflection. So the $T$-action encodes information about the closure inclusion order.

\smallskip

To any $(k,N)$-juggling pattern $\J$ is associated a successor-closed subquiver of $Q(U_{[N]},SB)$ whose vertex set is $\{e^{(i)}_j \, \vert \, i \in \Z_N, j \in J_i\}$. It has exactly $k$ vertices for each $i \in \Z_N$, and vice versa, to any such subquiver one can associate a juggling pattern. Mutations then correspond to cutting the tail of a segment in the subquiver and gluing it at the tail end of a lower segment, i.e. removing the elements
\[x \in J'_a \, , \quad x+1 \in J'_{a+1} \, , \quad \dots \, , \quad x+l \in J'_{a+l}\]
and replacing them with
\[x+s \in J_a \, , \quad x+s+1 \in J_{a+1} \, , \quad \dots \, , \quad x+s+l \in J_{a+l}\]
where $s \ge 1$. Notice that $x+l+1$ is in both $J'_{a+l+1}$ and $J_{a+l+1}$, and if $x+s+l+1 \ne N+1$ then it too is in $J'_{a+l+1} \cap J_{a+l+1}$. For the same reason, $x+s-1$ and $x-1$ are in neither of $J'_{a-1}$ and $J_{a-1}$, and the latter might be 0.

\begin{example}\label{ex:mutation}
    The $(2,4)$-juggling patterns
 \[\J' = \begin{matrix}
        &\{1,2\}& \\
        \{1,4\}&&\{2,3\} \\
        &\{3,4\}&
    \end{matrix} \qquad \ge \qquad
    \J = \begin{matrix}
        &\{1,3\}& \\
        \{2,4\}&&\{2,4\} \\
        &\{3,4\}&
    \end{matrix}\]
correspond respectively to the following successor-closed subquivers (in red) of $Q(U_{[4]},SB)$
\begin{center}
 \begin{tikzpicture}
     \begin{scope}[every node/.style={circle, draw=black!100, fill=black!100, 
    very thin, inner sep = 0pt, minimum size = 1.5mm}]
    \node[fill=red!100, draw=red!100] (01) at (0,2) {};
    \node[fill=red!100, draw=red!100] (02) at (0,1.5) {};
    \node (03) at (0,1) {};
    \node (04) at (0,0.5) {};
    \node (11) at (2,0) {};
    \node[fill=red!100, draw=red!100] (12) at (1.5,0) {};
    \node[fill=red!100, draw=red!100] (13) at (1,0) {};
    \node (14) at (0.5,0) {};
    \node (21) at (0,-2) {};
    \node (22) at (0,-1.5) {};
    \node[fill=red!100, draw=red!100] (23) at (0,-1) {};
    \node[fill=red!100, draw=red!100] (24) at (0,-0.5) {};
    \node[fill=red!100, draw=red!100] (31) at (-2,0) {};
    \node (32) at (-1.5,0) {};
    \node (33) at (-1,0) {};
    \node[fill=red!100, draw=red!100] (34) at (-0.5,0) {};
\end{scope}

\begin{scope}[every edge/.style= 
              {draw=black,thick}]
\path [->] (01) edge[bend left=40,draw=red] (12);
\path [->] (02) edge[bend left=40,draw=red] (13);
\path [->] (03) edge[bend left=40] (14);
\path [->] (11) edge[bend left=40] (22);
\path [->] (12) edge[bend left=40,draw=red] (23);
\path [->] (13) edge[bend left=40,draw=red] (24);
\path [->] (21) edge[bend left=40] (32);
\path [->] (22) edge[bend left=40] (33);
\path [->] (23) edge[bend left=40,draw=red] (34);
\path [->] (31) edge[bend left=40,draw=red] (02);
\path [->] (32) edge[bend left=40] (03);
\path [->] (33) edge[bend left=40] (04);

\end{scope}
    \end{tikzpicture} \qquad \qquad
 \begin{tikzpicture}
     \begin{scope}[every node/.style={circle, draw=black!100, fill=black!100, 
    very thin, inner sep = 0pt, minimum size = 1.5mm}]
    \node[fill=red!100, draw=red!100] (01) at (0,2) {};
    \node (02) at (0,1.5) {};
    \node[fill=red!100, draw=red!100] (03) at (0,1) {};
    \node (04) at (0,0.5) {};
    \node (11) at (2,0) {};
    \node[fill=red!100, draw=red!100] (12) at (1.5,0) {};
    \node (13) at (1,0) {};
    \node[fill=red!100, draw=red!100] (14) at (0.5,0) {};
    \node (21) at (0,-2) {};
    \node (22) at (0,-1.5) {};
    \node[fill=red!100, draw=red!100] (23) at (0,-1) {};
    \node[fill=red!100, draw=red!100] (24) at (0,-0.5) {};
    \node (31) at (-2,0) {};
    \node[fill=red!100, draw=red!100] (32) at (-1.5,0) {};
    \node (33) at (-1,0) {};
    \node[fill=red!100, draw=red!100] (34) at (-0.5,0) {};
\end{scope}

\begin{scope}[every edge/.style= 
              {draw=black,thick}]
\path [->] (01) edge[bend left=40,draw=red] (12);
\path [->] (02) edge[bend left=40] (13);
\path [->] (03) edge[bend left=40,draw=red] (14);
\path [->] (11) edge[bend left=40] (22);
\path [->] (12) edge[bend left=40,draw=red] (23);
\path [->] (13) edge[bend left=40] (24);
\path [->] (21) edge[bend left=40] (32);
\path [->] (22) edge[bend left=40] (33);
\path [->] (23) edge[bend left=40,draw=red] (34);
\path [->] (31) edge[bend left=40] (02);
\path [->] (32) edge[bend left=40,draw=red] (03);
\path [->] (33) edge[bend left=40] (04);

\end{scope}

    \end{tikzpicture}    
    \end{center}
and the mutation $\mu \colon \J' \longrightarrow \J$, in the picture below, is given by removing the pink vertices and adding the blue ones, that is, shifting the pink segment towards the center of the spiral.
\begin{center}
    \begin{tikzpicture}
     \begin{scope}[every node/.style={circle, draw=black!100, fill=black!100, 
    very thin, inner sep = 0pt, minimum size = 1.5mm}]
    \node (01) at (0,2) {};
    \node[fill=pink!100, draw=pink!100] (02) at (0,1.5) {};
    \node[fill=blue!100, draw=blue!100] (03) at (0,1) {};
    \node (04) at (0,0.5) {};
    \node (11) at (2,0) {};
    \node (12) at (1.5,0) {};
    \node[fill=pink!100, draw=pink!100] (13) at (1,0) {};
    \node[fill=blue!100, draw=blue!100] (14) at (0.5,0) {};
    \node (21) at (0,-2) {};
    \node (22) at (0,-1.5) {};
    \node (23) at (0,-1) {};
    \node (24) at (0,-0.5) {};
    \node[fill=pink!100, draw=pink!100] (31) at (-2,0) {};
    \node[fill=blue!100, draw=blue!100] (32) at (-1.5,0) {};
    \node (33) at (-1,0) {};
    \node (34) at (-0.5,0) {};
\end{scope}

\begin{scope}[every edge/.style= 
              {draw=black,thick}]
\path [->] (01) edge[bend left=40] (12);
\path [->] (02) edge[bend left=40,draw=pink] (13);
\path [->] (03) edge[bend left=40,draw=blue] (14);
\path [->] (11) edge[bend left=40] (22);
\path [->] (12) edge[bend left=40] (23);
\path [->] (13) edge[bend left=40] (24);
\path [->] (21) edge[bend left=40] (32);
\path [->] (22) edge[bend left=40] (33);
\path [->] (23) edge[bend left=40] (34);
\path [->] (31) edge[bend left=40, draw=pink] (02);
\path [->] (32) edge[bend left=40,draw=blue] (03);
\path [->] (33) edge[bend left=40] (04);
\path [->] (31) edge[dashed,thin] (32);
\path [->] (02) edge[dashed,thin] (03);
\path [->] (13) edge[dashed,thin] (14);

\end{scope}
    \end{tikzpicture}
    \end{center}
\end{example}

\noindent For more details, see \cite[Section 6]{ML1}.

\section{Symplectic torus action} \label{sec:torusaction2}

The goal of this section is to define \emph{symplectic} mutations, and to find an algebraic torus with an action on $X(n,2n)^{sp}$ whose 1-dimensional orbits encode them. We start with the following statement, which is a reformulation of \cite[Lemma 4.4]{ours}:
\begin{lemma}\label{lem:correctionlemma}
    Let $\mu \colon \J'' \longrightarrow \J'$ be a mutation between $(k,2n)$-juggling patterns, and suppose that $\J''$ is symplectic while $\J'$ is not. Then there exist a symplectic $(k,2n)$-juggling pattern $\J \le \J'$ and a mutation $R\mu \colon \J' \longrightarrow \J$, that replaces precisely the elements of the sets of $\J'$ which are paired to the elements introduced by $\mu$. Analogously, given $\nu \colon \I' \longrightarrow \I$ with $\I$ symplectic and $\I'$ non-symplectic, there exist $\I''$ symplectic and $R\nu \colon \I'' \longrightarrow \I'$, that introduces precisely the elements of the sets of $\J$ which are paired to the elements removed by $\nu$.
\end{lemma}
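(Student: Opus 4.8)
The plan is to reduce the statement to a purely combinatorial verification about the explicit description of mutations (given in Section~\ref{sec:torusaction1} in terms of shifting a segment of the successor-closed subquiver toward the center of the spiral) and the involution $R$, using Lemma~\ref{lem:correctsymmetry} and Corollary~\ref{cor:GNorder} to keep everything consistent with the Bruhat/closure order. Only the first assertion needs to be proven: the second follows by applying the first to $\nu' \colon R(\I) \longrightarrow R(\I')$, which is again a mutation (since $R$ is an order-preserving bijection on juggling patterns and, more precisely, sends successor-closed subquivers to successor-closed subquivers via $e_j^{(i)} \mapsto e_{\tilde j}^{(-i)}$, reversing each segment), and then applying $R$ to the resulting mutation. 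So from now on I focus on $\mu \colon \J'' \longrightarrow \J'$ with $\J''$ symplectic, $\J'$ not.

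First I would record exactly what $\mu$ does. By the description recalled before Example~\ref{ex:mutation}, $\mu$ removes a chain $x \in J'_a,\ x+1 \in J'_{a+1},\ \dots,\ x+l \in J'_{a+l}$ from $\J'$ and inserts $x+s \in J''_a,\ \dots,\ x+s+l \in J''_{a+l}$, where $s \ge 1$ and $x+s+l+1$ is either $2n+1$ or lies in both $J'_{a+l+1}$ and $J''_{a+l+1}$. Applying $R$ to the subquiver picture, the segment of $\J''$ that $\mu$ created corresponds, under $e_j^{(i)} \mapsto e_{\tilde j}^{(-i)}$, to a segment in $R(\J'') = \J''$; the key point is that its position in $\J''$ is determined by which basis vectors of $\J'$ were \emph{not} touched by $\mu$, together with the paired vectors of the inserted ones. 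Concretely, the segment $\{e^{(a+t)}_{x+s+t}\}_{0 \le t \le l}$ in $\J''$ maps under $R$ to the segment ending, roughly, at vertex $-a$, occupying the indices $\widetilde{x+s+t}$ on vertices $-(a+t)$; since $\J''$ is symplectic this segment is genuinely present in $\J''$, and comparing with the description of $\J' = R(\J'')$ outside the support of $\mu$ shows that $\J'$ agrees with $\J''$ everywhere except along this $R$-image segment, where $\J'$ instead contains a shifted-inward copy. That is exactly the data of a mutation $\J' \longrightarrow \J$ for a suitable $\J$, and by construction the elements $\J$ replaces in $\J'$ are the ones paired (via $i \leftrightarrow \tilde i$, $a \leftrightarrow -a$) with the elements $\mu$ introduced.

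Then I would check that $\J$, so defined, is symplectic, i.e. $R\J = \J$. One shows the subquiver of $\J$ is stable under the involution $e_j^{(i)} \mapsto e_{\tilde j}^{(-i)}$: outside the two conjugate segments involved, $\J$ coincides with $\J''$, which is symmetric; and the two segments (the one $\mu$ inserted, now removed again by $R\mu$, versus its $R$-image) are swapped by the involution by the very way $R\mu$ was defined. A small case analysis is needed where the support of $\mu$ and the support of $R\mu$ overlap --- this happens near the fixed vertices $0$ and $n$, or when $a \equiv -(a+l+1) \pmod{2n}$ --- and here one must verify that removing both conjugate chains simultaneously still yields a well-defined juggling pattern of the right dimension vector; this is the one genuinely delicate point, and I would handle it exactly as in the proof of \cite[Lemma 4.4]{ours}, tracking the endpoints $x+s+l+1$ and its pair to see that the "gluing" condition (landing on $2n+1$ or on a shared element) is preserved. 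Finally, $\J \le \J'$ is immediate: a mutation only shifts segments inward, so $J_i \ge J'_i$ for all $i$, hence $\J \le \J'$ by \cite[Proposition 7.3]{ML1}. The main obstacle, as indicated, is the overlap case; everything else is bookkeeping on the coefficient quiver.
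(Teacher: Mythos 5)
The paper does not prove this lemma; it is cited verbatim from \cite[Lemma 4.4]{ours}. So there is no ``paper proof'' to compare against, and your proposal has to stand on its own. As it stands it has two genuine gaps, one structural and one conceptual.

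First, the claimed reduction of the second assertion to the first does not work. You apply the first assertion to $\nu' \colon R(\I) \to R(\I')$, but since $R$ is order-preserving (Corollary~\ref{cor:GNorder}) and $\I' \ge \I$, we get $R(\I') \ge R(\I)=\I$; so $\nu'$ points in the wrong direction to be a mutation, and the honest $R$-image of $\nu$ is a mutation $R(\I') \to \I$ with a non-symplectic \emph{source} and a symplectic \emph{target} --- exactly the same type of data as $\nu$ itself, not the hypothesis of the first assertion, which demands a symplectic source. The two halves of the lemma place the symplectic pattern at opposite ends of the mutation, and $R$ cannot interchange them because it preserves the order. Both halves must be argued directly (with essentially parallel proofs), or one must use a genuinely order-reversing device rather than $R$.

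Second, the description of how $R$ acts on the coefficient-quiver picture is not right, and the main argument leans on it. The map $\rho \colon e^{(i)}_j \mapsto e^{(-i)}_{\tilde j}$ reverses arrows of the coefficient quiver (it sends $e^{(i)}_j \to e^{(i+1)}_{j+1}$ to the arrow $e^{(-i-1)}_{\tilde j -1} \to e^{(-i)}_{\tilde j}$), hence sends successor-closed subquivers to \emph{predecessor}-closed ones. The actual $R$ on juggling patterns, since $R(J)=[2n]\mysetminus\{\tilde j : j\in J\}$, is $\rho$ \emph{followed by complementation} in the vertex set of the coefficient quiver. So when $\J''$ is symplectic, the subquiver of $\J''$ is the \emph{complement} of its $\rho$-image, not equal to it; the sentence ``since $\J''$ is symplectic this segment is genuinely present in $\J''$'' is asserting the opposite of what symplecticity gives you. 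Relatedly, the segment $\{e^{(a+t)}_{x+s+t}\}$ that $\mu$ inserts lives in $\J'$, not $\J''$, and the line ``comparing with the description of $\J' = R(\J'')$'' is false on its face: $R(\J'')=\J''\neq\J'$. These are not just typos --- they are exactly the bookkeeping that determines where the correcting segment sits and which entries $R\mu$ must replace. Finally, the overlap case (when the $\mu$-support and the $R\mu$-support meet near the $\tau$-fixed vertices) is flagged but deferred to \cite{ours}; since that is the one point you yourself call ``genuinely delicate,'' the sketch is not self-contained there. The overall strategy --- analyze the inserted segment, find its $R$-partner, shift the partner inward, and check symplecticity and order --- is sound, but it needs the complement carefully threaded through and a direct argument for the second assertion.
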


\begin{proof}
	Suppose $\mu$ removes
	\begin{equation}\label{eq:oldsegment}
		x \in J''_a \, , \quad x+1 \in J''_{a+1} \, , \quad \cdots \, , \quad x+l \in J''_{a+l}
	\end{equation}
	from $\J''$ and replaces them with
	\begin{equation}\label{eq:newsegment}
		x+s \in J'_a \, , \quad x+s+1 \in J'_{a+1} \, , \quad \cdots \, , \quad x+s+l \in J'_{a+l}
	\end{equation}
	in $\J$. Now, $x+s+l+1$ is either in $J''_{a+l+1} \cap J'_{a+l+1}$ or is equal to $2n$. Also $x+l+1 \in J''_{a+l+1} \cap J'_{a+l+1}$. By assumption $\J'$ is not symplectic, and since the only difference from the symplectic $\J''$ is this segment, $\J'$ must contain the elements
	\[\tilde{x}-s-l \in J'_{-a-l} \, , \quad \tilde{x}-s-l+1 \in J'_{-a-l+1} \, , \quad \cdots \, , \quad \tilde{x}-s \in J'_{-a}\]
	but not the elements
	\[\tilde{x}-l \notin J'_{-a-l} \, , \quad \tilde{x}-l+1 \notin J'_{-a-l+1} \, , \quad \cdots \, , \quad \tilde{x} \notin J'_{-a} \, .\]
	If it did, they would be part of the added segment because they are not in $\J''$; but a mutation that removes some $y$ on a vertex and adds $\tilde{y}$ on the opposite vertex cannot connect a symplectic juggling pattern to a non-symplectic one. Furthermore, if $\tilde{x} < 2n$ then $\tilde{x}+1 \in J'_{-a+1}$. This holds because $x-1 \notin J''_{a-1}$ implies $\tilde{x}+1 \in J''_{-a+1}$, which cannot be part of the segment removed by $\mu$: if this was the case, $\tilde{x}+1$ would have to be of the form $x+m$ with $-a+1 \equiv_{2n} a+m$, which cannot happen because $m$ would have to be both odd and even. Then there is a mutation $R\mu \colon \J' \longrightarrow \J$ that removes precisely the first segment and adds the second one, since also $\tilde{x}-s-l-1 \notin J'_{-a-l-1}$ for parity reasons. We observe that $\J$ is symplectic since, by cardinality reasons, $R\mu$ removes all and only the elements paired by the symplectic form to those introduced by $\mu$, which made $\J'$ non-symplectic.
	
	The second half of the statement is proved analogously: if
	\[x \in I'_a \, , \quad x+1 \in I'_{a+1} \, , \quad \cdots \, , \quad x+l \in I'_{a+l}
	\]
	are replaced with
	\[x+s \in I_a \, , \quad x+s+1 \in I_{a+1} \, , \quad \cdots \, , \quad x+s+l \in I_{a+l}
	\]
	by $\nu$, then $\I'$ does not contain
	\[\tilde{x}-s-l \notin I_{-a-l} \, , \quad \tilde{x}-s-l+1 \notin I_{-a-l+1} \, , \quad \cdots \, , \quad \tilde{x}-s \notin I_{-a}\]
	because, since they are not in the removed segment, $\I$ would too. Instead, $\I'$ contains
	\[\tilde{x}-l \in I_{-a-l} \, , \quad \tilde{x}-l+1 \in I_{-a-l+1} \, , \quad \cdots \, , \quad \tilde{x} \in I_{-a} \, .\]	
	Since $x+s-1 \notin I_{a-1}$, $\tilde{x}-s+1$ is in $I_{-a+1}$ and therefore in $I'_{-a+1}$, since it cannot be one of the added elements. We conclude that there exists $\I''$ symplectic and a mutation $R\nu \colon \I'' \longrightarrow \I'$ which adds the appropriate elements. 
\end{proof}

\begin{definition}
    A pair of mutations as described in Lemma \ref{lem:correctionlemma} are called \emph{corrections} of one another.
\end{definition}

\begin{example}\label{ex:symplecticmutation}
Consider the juggling patterns
\[\J''= \begin{matrix}
        &\{2,4\}& \\
        \{1,4\}&&\{2,3\} \\
        &\{3,4\}&
    \end{matrix} \quad \ge \quad \J' = \begin{matrix}
        &\{2,4\}& \\
        \{3,4\}&&\{2,3\} \\
        &\{3,4\}&
    \end{matrix} \, \, .\]
They are linked by the following mutation $\mu$
    \begin{center}
 \begin{tikzpicture}
     \begin{scope}[every node/.style={circle, draw=black!100, fill=black!100, 
    very thin, inner sep = 0pt, minimum size = 1.5mm}]
    \node (01) at (0,2) {};
    \node[fill=red!100, draw=red!100] (02) at (0,1.5) {};
    \node (03) at (0,1) {};
    \node[fill=red!100, draw=red!100] (04) at (0,0.5) {};
    \node (11) at (2,0) {};
    \node[fill=red!100, draw=red!100] (12) at (1.5,0) {};
    \node[fill=red!100, draw=red!100] (13) at (1,0) {};
    \node (14) at (0.5,0) {};
    \node (21) at (0,-2) {};
    \node (22) at (0,-1.5) {};
    \node[fill=red!100, draw=red!100] (23) at (0,-1) {};
    \node[fill=red!100, draw=red!100] (24) at (0,-0.5) {};
    \node[fill=pink!100, draw=pink!100] (31) at (-2,0) {};
    \node (32) at (-1.5,0) {};
    \node[fill=blue!100, draw=blue!100] (33) at (-1,0) {};
    \node[fill=red!100, draw=red!100] (34) at (-0.5,0) {};
\end{scope}

\begin{scope}[every edge/.style= 
              {draw=black,thick}]
\path [->] (01) edge[bend left=40] (12);
\path [->] (02) edge[bend left=40,draw=red] (13);
\path [->] (03) edge[bend left=40] (14);
\path [->] (11) edge[bend left=40] (22);
\path [->] (12) edge[bend left=40,draw=red] (23);
\path [->] (13) edge[bend left=40,draw=red] (24);
\path [->] (21) edge[bend left=40] (32);
\path [->] (22) edge[bend left=40] (33);
\path [->] (23) edge[bend left=40,draw=red] (34);
\path [->] (31) edge[bend left=40,draw=pink] (02);
\path [->] (32) edge[bend left=40] (03);
\path [->] (33) edge[bend left=40,draw=blue] (04);
\path [->] (31) edge[bend left=30,dashed,thin] (33);

\end{scope}
    \end{tikzpicture}
    \end{center}
which replaces the 1 in $J''_3$ with the 3 in $J'_3$. Notice that this makes it so that $\J''$ is symplectic while $\J'$ is not, since the $3$ in $J'_3$ is paired with the 2 in $J'_1$. The correction removes the 2 in $J'_1$ and replaces it with a 4, the number paired to the 1 removed by $\mu$. Therefore $R\mu$ is
    \begin{center}
 \begin{tikzpicture}
     \begin{scope}[every node/.style={circle, draw=black!100, fill=black!100, 
    very thin, inner sep = 0pt, minimum size = 1.5mm}]
    \node (01) at (0,2) {};
    \node[fill=red!100, draw=red!100] (02) at (0,1.5) {};
    \node (03) at (0,1) {};
    \node[fill=red!100, draw=red!100] (04) at (0,0.5) {};
    \node (11) at (2,0) {};
    \node[fill=pink!100, draw=pink!100] (12) at (1.5,0) {};
    \node[fill=red!100, draw=red!100] (13) at (1,0) {};
    \node[fill=blue!100, draw=blue!100] (14) at (0.5,0) {};
    \node (21) at (0,-2) {};
    \node (22) at (0,-1.5) {};
    \node[fill=red!100, draw=red!100] (23) at (0,-1) {};
    \node[fill=red!100, draw=red!100] (24) at (0,-0.5) {};
    \node (31) at (-2,0) {};
    \node (32) at (-1.5,0) {};
    \node[fill=red!100, draw=red!100] (33) at (-1,0) {};
    \node[fill=red!100, draw=red!100] (34) at (-0.5,0) {};
\end{scope}

\begin{scope}[every edge/.style= 
              {draw=black,thick}]
\path [->] (01) edge[bend left=40] (12);
\path [->] (02) edge[bend left=40,draw=red] (13);
\path [->] (03) edge[bend left=40] (14);
\path [->] (11) edge[bend left=40] (22);
\path [->] (12) edge[bend left=40,draw=pink] (23);
\path [->] (13) edge[bend left=40,draw=red] (24);
\path [->] (21) edge[bend left=40] (32);
\path [->] (22) edge[bend left=40] (33);
\path [->] (23) edge[bend left=40,draw=red] (34);
\path [->] (31) edge[bend left=40] (02);
\path [->] (32) edge[bend left=40] (03);
\path [->] (33) edge[bend left=40,draw=red] (04);
\path [->] (12) edge[bend left=30,dashed,thin] (14);

\end{scope}
    \end{tikzpicture}
    \end{center}
which produces
\[\J = \begin{matrix}
        &\{2,4\}& \\
        \{3,4\}&&\{3,4\} \\
        &\{3,4\}&
    \end{matrix} \le \J' \, .\]
\end{example}

\begin{remark}
    A mutation and its correction "commute", meaning that they do not replace the same elements and that it's possible to apply them in reverse order, which yields the same juggling pattern. This also implies that $RR\mu = \mu$ for any mutation $\mu$.
\end{remark}

\begin{definition}
    A \emph{symplectic mutation} $\J'' \overset{sp}{\longrightarrow} \J$ between two symplectic juggling patterns $\J'' \ge \J$ is either a single mutation $\J'' \longrightarrow \J$ which connects the two, or the composition of a pair of corrections $\mu \colon \J'' \longrightarrow \J'$ and $R\mu \colon \J' \longrightarrow \J$.
\end{definition}

\begin{remark}\label{rem:symplmutationsproperties}
    The mutation from Example \ref{ex:mutation} is symplectic, as well as the composition $\J'' \longrightarrow \J$ from Example \ref{ex:symplecticmutation}. Observe that if a single mutation $\J' \longrightarrow \J$ between symplectic juggling patterns changes some number $x$ to $y$ on a vertex $i$, it must also change $\tilde{y}$ to $\tilde{x}$ on $-i$. In particular, it affects an odd number of vertices, $\vert x-y \vert$ must be odd, and if $a=0$ or $n$ then $y=\tilde{x}$.
\end{remark}

To define the action we want, we need a weight function, different from the previous one, that induces a $\C^*$-action on $X(n,2n)$ which preserves $X(n,2n)^{sp}$. We choose
\[\widehat{\mathbf{wt}} \colon b_{j,p} \longmapsto p-\tilde{p} = -2n+2p-1 \, \, ,\]
so that the corresponding $\C^*$-action is
\begin{equation}\label{eq:newC*action}
z \cdot b_{j,p} = z^{-2n+2p-1} b_{j,p} = z^{p - \tilde{p}} b_{j,p} \, \, .
\end{equation}
Once again $p \in [2n]$ and $j \in [0, 2n-1]$. This new grading is attractive, increasing by 2 on each arrow, and the corresponding action preserves the symplectic form: since $b_{j,p}=e^{(j+p)}_p$, one has
\[(z \cdot v, z \cdot w)_\Omega = \sum_{p=1}^{2n} (-1)^{p+1} z^{p-\tilde{p}} \alpha_p \cdot z^{\tilde{p}-p} \beta_{\tilde{p}} (e^{(i)}_p, e^{(-i)}_{\tilde{p}})_\Omega = \sum_{p=1}^{2n} (-1)^{p+1} \alpha_p \cdot \beta_{\tilde{p}} (e^{(i)_p}, e^{(-i)}_{\tilde{p}})_\Omega = (v,w)_\Omega\]
for $v = \alpha_1 e^{(i)}_1 + \dots \alpha_{2n} e^{(i)}_{2n} \in U_{[2n]}^{(i)}$, $w = \beta_1 e^{(-i)}_1 + \dots \beta_{2n} e^{(-i)}_{2n} \in U_{[2n]}^{(-i)}$ and $z \in \C^*$. This implies that the action restricts to $X(n,2n)^{sp}$, therefore its attracting sets in $X(n,2n)^{sp}$ are the symplectic cells.

\smallskip

We now define a skeletal action of the algebraic torus $(\C^*)^{n+1}$ on $X(n,2n)^{sp}$. First, consider the action of $T \coloneqq (\C^*)^{2n+1}$ on $X(n,2n)$ given by
\begin{equation}\label{eq:newTaction}
    (z, \gamma_0, \dots, \gamma_{2n-1}) \cdot b_{j,p} = z^{-2n+2p-1} \cdot \gamma_j \cdot b_{j,p} = z^{p - \tilde{p}} \cdot \gamma_j \cdot b_{j,p} \, .\end{equation}
It is well defined, as it commutes with the map $s$, defined in \eqref{eq:s}, up to a factor of $z^2$: letting $t = (z, \gamma_0, \dots, \gamma_{2n-1}) \in T$, we have
\begin{gather*} s (t \cdot b_{j,p}) = s (z^{-2n+2p-1} \gamma_j b_{j,p}) = z^{-2n+2p-1} \gamma_j b_{j,p+1} = \\ = z^{-2n+2(p+1)-1} z^{-2} \gamma_j b_{j,p+1} = z^{-2} t \cdot (s(b_{j,p}))\, \, .\end{gather*}
Let $T^{sp}$ be the maximal subgroup of $T$ that preserves the symplectic form, and let $i \in \Z_N$ be a vertex and $t \in T$ as above. For any two vectors $v = \alpha_1 e^{(i)}_1 + \dots \alpha_{2n} e^{(i)}_{2n} \in U_{[2n]}^{(i)}$, $w = \beta_1 e^{(-i)}_1 + \dots \beta_{2n} e^{(-i)}_{2n} \in U_{[2n]}^{(-i)}$, we have
\begin{gather*}
    (v,w)_\Omega = \sum_{p=1}^{2n} (-1)^{p+1} \alpha_p \cdot \beta_{\tilde{p}} \, ;\\
    t \cdot v = \sum_{p=1}^{2n} \alpha_p \cdot z^{-2n+2p-1} \gamma_{i-p} \cdot e^{(i)}_p \, ;\\
    t \cdot w = \sum_{p=1}^{2n} \beta_{\tilde{p}} \cdot z^{2n-2p+1} \gamma_{-i+p-1} \cdot e^{(-i)}_{\tilde{p}} \, ;
\end{gather*}
so
\[(t \cdot v, t \cdot w)_\Omega = \sum_{p=1}^{2n} (-1)^{p+1} \alpha_p \cdot \beta_{\tilde{p}} \cdot z^0 \cdot \gamma_{i-p} \cdot \gamma_{-i+p-1} \, \, .\]
Therefore $t$ preserves the symplectic form if and only if $\gamma_{-j-1}=\gamma_j^{-1}$ for all $j \in [0,2n-1]$, where the subscript is taken modulo $2n$. Then $T^{sp}$ is an algebraic torus of dimension $n+1$, since the following group homomorphism $(\C^*)^{n+1} \longrightarrow (\C^*)^{2n+1}$ is injective with image $T^{sp}$:
\[(z, \gamma_0, \dots, \gamma_{n-1}) \longmapsto (z, \gamma_0, \dots, \gamma_{n-1}, \frac{1}{\gamma_{n-1}}, \dots, \frac{1}{\gamma_0}) \, .\]
The subvariety $X(n,2n)^{sp}$ is not closed under the action of $(\C^*)^{2n+1}$, but it is for the action of this smaller torus $T^{sp}$.

\begin{theorem}\label{thm:mainresult1}
The $T^{sp}$-action makes $X(n,2n)^{sp}$ into a GKM variety.
\end{theorem}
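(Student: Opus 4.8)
The plan is to verify directly the two conditions in the definition of a GKM variety: vanishing of rational cohomology in odd degrees, and skeletality of the $T^{sp}$-action. Note first that $X(n,2n)^{sp} = X(n,2n)^\tau$ is closed in the projective variety $X(n,2n)$, hence projective, so the definition applies.

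For the cohomology I would use the $\C^*$-action induced by $\widehat{\mathbf{wt}}$, which is the one-parameter subgroup $z \mapsto (z,1,\dots,1)$ of $T^{sp}$. Since $\widehat{\mathbf{wt}}$ is attractive and the action restricts to $X(n,2n)^{sp}$, the restriction to $X(n,2n)^{sp}$ of the Bialynicki--Birula decomposition of $X(n,2n)$ is filtrable, and its strata are exactly the attracting sets, namely the symplectic cells $C_\J^{sp}$ for $\J \in JP(n,2n)^{sp}$. By Corollary \ref{cor:symplcoordinates} (equivalently by \cite[Theorem A]{ours}) each $C_\J^{sp}$ is an affine space. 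A projective variety admitting a filtrable decomposition into affine spaces has free cohomology concentrated in even degrees, which settles the cohomological condition.

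For skeletality, the first observation is that $T^{sp}$ is abelian and contains the $\widehat{\mathbf{wt}}$-torus, hence commutes with it and so preserves each attracting cell $C_\J^{sp}$. On such a cell, equipped with the coordinates $u_{[a,i,j]}$ of Corollary \ref{cor:symplcoordinates} indexed by $\overset{sp}{\sim}$-classes of triples, a direct computation using $e^{(a)}_i = b_{a-i,i}$ and \eqref{eq:newTaction} shows that $T^{sp}$ acts diagonally, scaling the coordinate $u^{(a)}_{i,j}$ by the character
\[
\chi_{[a,i,j]} \colon (z,\gamma_0,\dots,\gamma_{n-1}) \longmapsto z^{2(i-j)}\,\gamma_{a-i}\,\gamma_{a-j}^{-1},
\]
where the $\gamma$-subscripts are reduced modulo $2n$ and then rewritten by means of $\gamma_{-m-1} = \gamma_m^{-1}$; one checks this is well defined on $\overset{sp}{\sim}$-classes precisely because the two relations defining $\overset{sp}{\sim}$ are matched by the relations $\gamma_{-m-1}\gamma_m = 1$ that cut out $T^{sp}$ inside $T$. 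Since $i>j$ forces the exponent of $z$ to be positive, every such character is nonzero, so the only $T^{sp}$-fixed point in $C_\J^{sp}$ is $p_\J$; as the cells cover $X(n,2n)^{sp}$, this shows $(X(n,2n)^{sp})^{T^{sp}} = \{p_\J : \J \in JP(n,2n)^{sp}\}$ is finite. For the one-dimensional orbits the crucial point is the following:

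\emph{Key claim.} For each symplectic $\J$, the characters $\chi_{[a,i,j]}$ attached to distinct coordinates of $C_\J^{sp}$ are pairwise non-proportional in the character lattice of $T^{sp}$.

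Granting this, the standard analysis of a diagonal torus action on affine space shows that the one-dimensional orbits in $C_\J^{sp}$ are exactly the punctured coordinate axes, one for each coordinate; summing over the finite set $JP(n,2n)^{sp}$ yields finitely many one-dimensional $T^{sp}$-orbits in $X(n,2n)^{sp}$, and the action is skeletal. I expect this Key claim to be the main obstacle: $T^{sp}$ has dimension only $n+1$, considerably smaller than the $2n+1$ of the ambient torus $T$ for which the analogous statement on $X(n,2n)$ is available, so accidental proportionalities must be excluded by hand, using the precise shape of the characters — the positive even exponent $2(i-j)$, the constraints $j \in J_a$, $i \notin J_a$, $i > j$ coming from the cell, and the fact that the coordinate identification $\overset{sp}{\sim}$ and the folding $\gamma_{-m-1} = \gamma_m^{-1}$ were designed in tandem so that triples in different $\overset{sp}{\sim}$-classes give genuinely different characters. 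This is also the point at which symplectic mutations enter the picture: each punctured axis in $C_\J^{sp}$ corresponds to a symplectic mutation out of $\J$, which is the geometric input reused in Sections \ref{sec:mainresults} for Theorems \ref{thm:mainresult2} and \ref{thm:mainresult3}.
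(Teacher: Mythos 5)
Your strategy is the same as the paper's: handle the cohomological condition via the attractive $\widehat{\mathbf{wt}}$-grading and the affine-cell decomposition, and handle skeletality by computing the $T^{sp}$-characters on the affine coordinates of each symplectic cell. Your character formula $\chi_{[a,i,j]}(z,\gamma_0,\dots,\gamma_{n-1}) = z^{2(i-j)}\gamma_{a-i}\gamma_{a-j}^{-1}$ agrees with the paper's (which records it via the segment/position data $s^{(a)}_i = a-i$, $p^{(a)}_i = i$), and your fixed-point argument is fine.

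The genuine gap is that you have declared the \emph{Key claim} to be the main obstacle and then left it unproven. It is not an aside: if two $\overset{sp}{\sim}$-inequivalent coordinates of some $C_\J^{sp}$ had proportional characters, the locus in that cell supported on exactly those two coordinates would be a positive-dimensional family of one-dimensional $T^{sp}$-orbits, and the action would fail to be skeletal. The paper's proof of the theorem consists essentially of exactly this claim: it writes out when two coordinate characters coincide, observes that this happens precisely under the $\overset{sp}{\sim}$ relation, and notes that the $\gamma$-factor $\gamma_{s^{(a)}_i}\gamma^{-1}_{s^{(a)}_j}$ never degenerates to $1$ because a segment meets each vertex of $\Delta_{2n}$ exactly once, so the character is never a pure power of $z$.

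To close the gap, the argument you gesture at does work and should be written out. After folding $\gamma_{-m-1}=\gamma_m^{-1}$, the $\gamma$-part of $\chi_{[a,i,j]}$ lives in $\Z^n$ and is a nonzero vector of the form $\pm e_c \pm e_{c'}$ with $c\ne c'$, or $\pm 2e_c$; together with the strictly positive $z$-exponent $2(i-j)$ this forces proportional characters to be equal, since any scalar $\lambda\ne 1$ either makes the $z$-exponent nonpositive or produces a $\gamma$-part with entries outside $\{0,\pm1,\pm2\}$ or of the wrong support size. You then verify that equality of $\chi_{[a,i,j]}$ and $\chi_{[b,l,m]}$ forces $(a,i,j)\overset{sp}{\sim}(b,l,m)$; this is the computation the paper records (equal $z$-exponent, and the two alternatives for matching the segment labels). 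Both are short case analyses, and you need to actually carry them out rather than assert that the structures were ``designed in tandem.''
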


\begin{proof}
The fixed points for the $\C^*$-action on $X(n,2n)$ given by the grading $\widehat{\textbf{wt}}(b_{j,p}) = p-\tilde{p}$ are the juggling pattern points $p_\J$, the same as the fixed points for the $T$-action defined in \eqref{eq:newTaction}. This holds because the eigenvalues on the basis vectors of each vertex are all different \cite[Theorem 1]{giovanni}. Furthermore, such a $\C^*$-action is given by composing the cocharacter
\[\chi \colon z \longmapsto (z, 1, \dots, 1) \in T^{sp}\]
with the natural inclusion of $T^{sp}$ into $T$ and then acting via $T$. This implies that the $T^{sp}$-fixed points in $X(n,2n)$ are once again the juggling pattern points: therefore the $T^{sp}$-fixed points in $X(n,2n)^{sp}$ are the points $p_\J$ with $\J$ symplectic.

\smallskip

Now we study the 1-dimensional $T^{sp}$-orbits. The cells $C_\J^{sp}$ are $T^{sp}$-stable because any element
\[t = (z, \gamma_0, \dots, \gamma_{n-1}, \frac{1}{\gamma_{n-1}}, \dots, \frac{1}{\gamma_0})\]
of $T^{sp}$, seen as a collection of diagonal matrices in the standard basis on each vertex, factors as $\chi(z) \cdot g$, where $g$ is an element of $G^{sp}$, and because the action of $\chi(z) \in T^{sp}$ is exactly the action of $z \in \C^*$, under which the symplectic cells $C_\J^{sp}$ are stable. The element $t$ acts on a point of $X(n,2n)^{sp}$ with coordinates $(u^{(a)}_{i,j})$ as follows:
\[t \cdot \left( u^{(a)}_{i,j} \right) = \left( u^{(a)}_{i,j} \cdot z^{-2n+2p^{(a)}_i-1} \cdot z^{2n-2p^{(a)}_j+1} \cdot \gamma_{s^{(a)}_i} \cdot \gamma^{-1}_{s^{(a)}_j} \right) = \left( u^{(a)}_{i,j} \cdot z^{2p^{(a)}_i-2p^{(a)}_j} \cdot \gamma_{s^{(a)}_i} \cdot \gamma^{-1}_{s^{(a)}_j} \right) \, .\]
Here $s^{(a)}_i$ denotes the segment of the coefficient quiver that $e^{(a)}_i$ lies on, and it ranges in $[0,2n-1]$, while $p^{(a)}_i \in [2n]$ is its position in the segment. Since $t \in T^{sp}$, we have $\gamma_j^{-1} = \gamma_{-j-1}$. We know that a point has all zero coordinates if and only if it is fixed, so for the orbit to be one-dimensonal, at least one coordinate must be nonzero. Consider $a$, $i$ and $j$ be such that $u^{(a)}_{i,j} \ne 0$. Since the group acts component-wise via multiplication, for the orbit of a point to be 1-dimensional, any other nonzero coordinate $u^{(b)}_{l,m}$ must be such that $2p^{(a)}_i-2p^{(a)}_j = 2p^{(b)}_l-2p^{(b)}_m$ and that either $s^{(a)}_i = s^{(b)}_l$ and $s^{(a)}_j = s^{(b)}_m$ or $s^{(a)}_i = -s^{(b)}_m-1$ and $s^{(a)}_j = -s^{(b)}_l -1$. Notice that $s^{(b)}_l$ can never be equal to $s^{(b)}_m$ since a segment encounters each vertex only once, therefore the factors $\gamma_{s^{(b)}_l} \cdot \gamma^{-1}_{s^{(b)}_m}$ and $\gamma_{s^{(a)}_i} \cdot \gamma^{-1}_{s^{(a)}_j}$ cannot vanish.
We deduce that the 1-dimensional orbits are finitely many in number. We end the proof by observing that, since the symplectic cells are affine spaces as a consequence of Lemma \ref{lem:tauforcells}, the rational cohomology of $X(n,2n)^{sp}$ vanishes in odd degree.

\end{proof}

\begin{remark}
Any 1-dimensional $T^{sp}$-orbit in $X(n,2n)^{sp}$ is either a 1-dimensional $T$-orbit, or the intersection of $X(n,2n)^{sp}$ with a 2-dimensional $T$-orbit in $X(n,2n)$. Furthermore, the $T^{sp}$-action is again locally linearizable, just like the $T$-action. This holds since $X(n,2n)^{sp}$ is a GKM variety \cite[1.2]{GKM}. Therefore any 1-dimensional $T^{sp}$-orbit in $X(n,2n)^{sp}$ contains two fixed points in its closure, and it must be contained in some cell $C_\J^{sp}$, since the cells are $T^{sp}$-stable. On the level of mutations, a 1-dimensional $T^{sp}$-orbit whose closure points are $p_{\J''}$ and $p_\J$, with $\J'' \ge \J$, translates to either a single mutation $\mu \colon \J'' \longrightarrow \J$ or the composition of a mutation $\nu \colon \J'' \longrightarrow \J'$ and its correction $R\nu \colon \J' \longrightarrow \J$, i.e. a symplectic mutation $\J'' \overset{sp}{\longrightarrow} \J$ \cite[Corollary 4.5]{ours}.
\end{remark}

\begin{prop}\label{prop:symplmutationsareorbits}
    Two symplectic $(n,2n)$-juggling patterns $\J' \ge \J$ are connected by a symplectic mutation if and only if $p_{\J'}$ and $p_\J$ are the closure points of a 1-dimensional $T^{sp}$-orbit.
\end{prop}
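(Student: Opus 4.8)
The plan is to prove the two implications separately, in both directions working in the explicit affine coordinates on the symplectic cells supplied by Corollary \ref{cor:symplcoordinates} and reading off $T^{sp}$-orbit closures from the attractiveness of the grading $\widehat{\mathbf{wt}}$. The common mechanism is that a $1$-dimensional orbit is cut out by a single free parameter, and that free parameter corresponds to exactly one equivalence class of triples for $\overset{sp}{\sim}$, which in turn is a segment (or a $\tau$-mirror pair of segments) of the coefficient quiver.

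For ``$\Leftarrow$'' I would take a point $V$ in the given $1$-dimensional $T^{sp}$-orbit $O'$ and invoke the $T^{sp}$-action formula from the proof of Theorem \ref{thm:mainresult1}: for $O'$ to be $1$-dimensional, all nonzero affine coordinates of $V$ must be scaled by a single character of $T^{sp}$, and (using $\gamma_{-k-1}=\gamma_k^{-1}$, together with the fact that the $z$-weight $2(i-j)$ is positive) this forces them to lie in one $\overset{sp}{\sim}$-class $c$. Such a class encodes a single mutation moving a $\tau$-self-symmetric segment, or a pair of corrections moving mirror segments. Then $\overline{O'}\cong\C\P^1$ with $T^{sp}$-fixed, hence symplectic-juggling, boundary points; since $\widehat{\mathbf{wt}}$ is attractive the limit where the free coordinate goes to $0$ is the cell point $p_{\J'}$, and the limit where it goes to $\infty$ is the coordinate point obtained by performing the segment shift recorded by $c$, which is $p_\J$. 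Hence $\J'\overset{sp}{\longrightarrow}\J$ is a symplectic mutation. (This is also \cite[Corollary 4.5]{ours} together with the remark preceding the statement, which describes $O'$ as a $1$-dimensional $T$-orbit or as the intersection of $X(n,2n)^{sp}$ with a $2$-dimensional one.)

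For ``$\Rightarrow$'' the plan is to exhibit the orbit directly. Given a symplectic mutation $\J'\overset{sp}{\longrightarrow}\J$, its deleted elements trace a segment or a $\tau$-mirror pair of segments, and since $\overset{sp}{\sim}$ already identifies $(a,i,j)$ with $(-a,\tilde{j},\tilde{i})$, these deletions are recorded by a single $\overset{sp}{\sim}$-class $c$, with $i-j$ equal to the shift $|x-y|$ of the mutation. By Remark \ref{rem:symplmutationsproperties} this shift is odd when the segment is $\tau$-self-symmetric, so the sign $(-1)^{i+j+1}$ of Lemma \ref{lem:tauforcells} is $+1$ there, and $c$ is a genuine (not identically vanishing) coordinate of $C_{\J'}^{sp}$. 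Let $V_0\in C_{\J'}^{sp}$ be the point with $c$-coordinate $1$ and all other $\overset{sp}{\sim}$-coordinates zero; by Corollary \ref{cor:symplcoordinates} it lies in $X(n,2n)^{sp}$. The cocharacter $\chi(z)=(z,1,\dots,1)$ scales the coordinates in $c$ by $z^{2(i-j)}$ with $i-j\ne 0$, and all of them are scaled by one character of $T^{sp}$, so $O':=T^{sp}\cdot V_0$ is $1$-dimensional; by local linearizability $\overline{O'}\cong\C\P^1$ with symplectic-juggling boundary points. As $z\to 0$ all coordinates vanish, so one of them is $p_{\J'}$; as $z\to\infty$ the basis vectors $v^{(a)}_j$ in \eqref{eq:coordinatebasis} with $(a,i,j)\in c$ become proportional to $e^{(a)}_i$, so in the limit each deleted index $j$ is replaced by the corresponding $i$, i.e.\ the segment (or mirror pair) slides all the way down exactly as prescribed by the symplectic mutation, and the other boundary point is $p_\J$.

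The step I expect to be the main obstacle is making the ``$z\to\infty$'' limit rigorous: one must check that $\chi(z)\cdot V_0$ converges, inside the projective variety, to the coordinate point $p_\J$ and not to some other (degenerate) configuration — equivalently, that sliding the segment(s) of $c$ collides with the remaining segments only in the harmless way built into the definition of a mutation (the boundary condition that $x+s+l+1$ is either $N+1$ or lies in both $J'_{a+l+1}$ and $J_{a+l+1}$). For a single mutation this is the mutation–orbit dictionary of \cite[Section 6]{ML1}; for a pair of corrections it additionally requires that the two mirror segments do not interfere with one another, which is precisely the commutation of a mutation with its correction noted after Lemma \ref{lem:correctionlemma} and underlies the $2$-dimensional $T$-orbit picture of \cite[Corollary 4.5]{ours}.
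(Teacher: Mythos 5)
Your proposal is correct and follows essentially the same route as the paper: the point $V_0$ you construct is exactly the $t=1$ point of the path $V(t)_a = \Span\bigl(\{e_j^{(a)} \mid j\in J_a\cap J'_a\}\cup\{e_{j-s}^{(a)}+te_j^{(a)}\mid j\in J_a\setminus J'_a\}\bigr)$ that the paper exhibits, and your ``$\Leftarrow$'' argument makes explicit the same confinement-to-one-$\overset{sp}{\sim}$-class observation from the proof of Theorem~\ref{thm:mainresult1} that the paper invokes implicitly (together with \cite[Corollary~4.5]{ours} and the remark preceding the proposition). The only difference is that you spell out the $z\to 0$ and $z\to\infty$ limit analysis that the paper compresses into the citation, which is a harmless and perhaps welcome expansion.
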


\begin{proof}
    The path
    \[V(t)_a \coloneqq \Span \Bigl ( \{e_j^{(a)} \, \vert \, j \in J_a \cap J'_a\} \cup \{e_{j-s}^{(a)} + t \cdot e_j^{(a)}  \, \vert \, j \in J_a \text{\textbackslash} J'_a \} \Bigr )\]
    connects $p_\J$ and $p_{\J'}$ \cite[Corollary 4.5]{ours} and is a 1-dimensional $T^{sp}$-orbit, because of the description of orbits in the proof of the previous theorem.
\end{proof}

\begin{prop}
    The $G^{sp}$-orbits in $X(n,2n)^{sp}$ coincide with the attracting loci of the $\C^*$-action given by the grading $\widehat{\emph{\textbf{wt}}}$.
\end{prop}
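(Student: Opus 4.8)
The plan is to reduce everything to the already-established behaviour of the old $\C^*$-action \eqref{eq:oldC*action}. The first step is to compare the two weight functions: since $\widehat{\mathbf{wt}}(b_{j,p}) = 2p-(2n+1) = 2\,\mathbf{wt}(b_{j,p}) - (2n+1)$, the grading $\widehat{\mathbf{wt}}$ is obtained from $\mathbf{wt}$ by multiplying all weights by the positive integer $2$ and then adding the constant $-(2n+1)$. On the level of $\C^*$-actions on the (projective) quiver Grassmannian $X(n,2n)$, scaling all weights by $2$ amounts to precomposing the action with the homomorphism $z\mapsto z^2$ of $\C^*$, while adding a constant $c$ to all weights changes the action on homogeneous coordinates by the overall scalar $z^c$ and hence acts trivially on $X(n,2n)$. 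Neither operation alters the set of fixed points, nor, for any fixed point $p_\J$, the locus of points attracted to it. Therefore the $\C^*$-action induced by $\widehat{\mathbf{wt}}$ has the same fixed points $p_\J$ and the same attracting loci as the one induced by $\mathbf{wt}$, and by \cite{ML1} (see the discussion following \eqref{eq:oldC*action}) the attracting locus of $p_\J$ in $X(n,2n)$ is the $G$-orbit $C_\J$.

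The second step is to restrict to the subvariety. As shown before Theorem \ref{thm:mainresult1}, the $\widehat{\mathbf{wt}}$-action preserves $\Omega$, so the closed subvariety $X(n,2n)^{sp} = X(n,2n)^\tau$ is $\C^*$-stable; consequently, for a $\C^*$-fixed point $p_\J$ lying in $X(n,2n)^{sp}$ (that is, for $\J$ symplectic), the attracting locus of $p_\J$ inside $X(n,2n)^{sp}$ is $C_\J \cap X(n,2n)^{sp}$, and a non-symplectic $\J$ contributes no attracting locus in $X(n,2n)^{sp}$ because then $p_\J \notin X(n,2n)^{sp}$. By Lemma \ref{lem:goodorbits2}, the intersection $C_\J \cap X(n,2n)^{sp}$ is nonempty precisely when $\J$ is symplectic, and in that case $p_\J \in X(n,2n)^{sp}$, so Proposition \ref{prop:orbitintersection} applied to $x = p_\J$ yields
\[C_\J \cap X(n,2n)^{sp} = (G\cdot p_\J)\cap X(n,2n)^{sp} = G^{sp}\cdot p_\J = C_\J^{sp}\,.\]
Hence the attracting loci of the $\widehat{\mathbf{wt}}$-action on $X(n,2n)^{sp}$ are exactly the cells $C_\J^{sp}$ with $\J$ symplectic; and by the parametrization of the $G^{sp}$-orbits by symplectic $(n,2n)$-juggling patterns (Lemma \ref{lem:goodorbits2} together with Proposition \ref{prop:orbitintersection}) these are precisely the $G^{sp}$-orbits, which proves the claim.

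I do not expect a serious obstacle. The only points needing a little care are the invariance of fixed points and attracting loci under rescaling and shifting an \emph{attractive} weight function — which is exactly why one checks that $\widehat{\mathbf{wt}}$ remains attractive, so that all the relevant limits exist in the projective variety — and the elementary observation that for a $\C^*$-stable closed subvariety the attracting locus is the intersection with the ambient one. If one prefers to bypass the rescaling argument, an alternative is to cite directly that the B\l{}ynicki-Birula decomposition of $X(n,2n)$ attached to any attractive grading coincides with the decomposition into the cells $C_\J$, and then run only the second paragraph.
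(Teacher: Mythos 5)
Your argument is essentially identical to the paper's: both observe that $\widehat{\mathbf{wt}} = 2\,\mathbf{wt} - (2n+1)$, so the two $\C^*$-actions differ by the cocharacter $z\mapsto z^2$ and an overall scalar $z^{-2n-1}$ acting trivially, whence the attracting loci agree, and then restrict to the $\C^*$-stable subvariety $X(n,2n)^{sp}$ and invoke $C_\J\cap X(n,2n)^{sp}=C_\J^{sp}$ for symplectic $\J$. Your write-up is slightly more explicit about why rescaling and shifting an attractive weight preserve fixed points and attracting sets, and about discarding non-symplectic $\J$, but it is the same proof.
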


\begin{proof}
    We know that the attracting loci in $X(n,2n)$ for the $\C^*$-action given by the weight \textbf{wt}$(b_{j,p}) = p$ are the $G$-orbits. Also, notice from \eqref{eq:oldC*action} and \eqref{eq:newC*action} that the two $\C^*$-actions differ by the group morphism
    \[z \longmapsto z^2\]
    and by diagonal multiplication by the scalar $z^{-2n-1}$, which acts trivially on our quiver Grassmannian. Therefore the attracting loci for the two actions coincide. Moreover, the second action preserves $X(n,2n)^{sp}$, thus for any symplectic juggling pattern $\J$ we have
    \begin{gather*}
    \{V \in X(n,2n)^{sp} \, \vert \, \lim_{z \rightarrow 0} z \cdot V= p_\J\} = \{V \in X(n,2n) \, \vert \, \lim_{z \rightarrow 0} z \cdot V= p_\J\} \cap X(n,2n)^{sp} = \\ = C_\J \cap X(n,2n)^{sp} = C_\J^{sp} \, .\end{gather*}
\end{proof}

From the previous section, we know that the 1-dimensional orbits for the action of $T$ encode all the information necessary to recover the $G$-orbit closure inclusion order. We prove this to be true for $T^{sp}$ and the $G^{sp}$-orbits as well.

\section{Main results} \label{sec:mainresults}

\subsection{Closure inclusion order}

\begin{remark}
    For dimensional reasons, given a symplectic $(n,2n)$-juggling pattern $\J''$ and a mutation $\mu \colon \J'' \longrightarrow \J'$ as in Lemma \ref{lem:correctionlemma}, there is only one symplectic juggling pattern $\J$ obtained by mutating $\J'$ on the vertices opposite to those that $\mu$ changes. This mutation is $R\mu$.
\end{remark}

\begin{lemma}\label{lem:symplmutationsarereflections}
    Symplectic mutations in $X(n,2n)^{sp}$ correspond to elementary Bruhat relations in $\left(C^0_{n+1},S_C\right)$.
\end{lemma}

\begin{proof}
    Let $\J \le \J'$ be symplectic $(n,2n)$-juggling patterns such that their corresponding elements in $C^0_{n+1}$, $g_\J$ and $g_{\J'}$, differ by a reflection $t$ in type $C$. That is, either $t=(i, -i-1)$ for some $i$ or $t = (i,j)(-j-1,-i-1) \eqqcolon t_1 \circ t_2$ for some $i < j$, by Corollary \ref{cor:transpositionsintypeC}. In the first case $t$ is a reflection in type $A$ as well, so there is a simple mutation between $\J$ and $\J'$, which must be symplectic by definition. In the second case, we know that the juggling patterns corresponding to $g \le g \circ (i,j)$, for some $g \in A^0_{2n}$, differ only on the vertices $i-n+1, i-n+2, \dots, , j-n$ by Lemma \ref{lem:mutatedpermutations}. We can assume $t_1 \ne t_2$; then $t_1$ corresponds to a mutation on vertices $(i-n+1, \dots, j-n)$ and $t_2$ corresponds to a mutation on vertices $(-j-n, \dots, -i-n-1) = (-j+n, \dots, -i+n-1)$, which are opposite segments under the quiver symmetry. Since the mutation for $t_2$ produces a symplectic juggling pattern starting from a non-symplectic one (the one corresponding to $g_\J \circ t_1$) and changing only those vertices, it must be the correction of the mutation for $t_1$.

    \smallskip
    
    On the other hand, let now $\mu \colon \J'' \overset{sp}{\longrightarrow} \J$ be a symplectic mutation. If $\mu$ is a single mutation, then $f_{\J''} = g \circ \text{id}_n$ and $f_\J = g \circ (i,j) \circ \text{id}_n$ for some $g \in C^0_{n+1}$ and $(i,j) \in A^0_{2n}$ with $i<j$. Once again, the vertices that $\mu$ changes are those from $i-n+1$ to $j-n$ and they must satisfy $i-n+1 \equiv_{2n} -(j-n)$ by Remark \ref{rem:symplmutationsproperties}. Therefore the transposition $(i,j)$ is of the form $(i,-i-1)$. If instead $\mu$ is a composition $\J'' \overset{\nu}{\longrightarrow} \J' \overset{R\nu}{\longrightarrow} \J$, their respective bounded affine permutations must be of the form
    \[f_{\J''} = g \circ (i,j) \circ (i',j') \circ \text{id}_n \, , \quad f_{\J'}= g \circ (i,j) \circ \text{id}_n \, , \quad f_\J = g \circ \text{id}_n\]
    for some $g \in C^0_{n+1}$ and transpositions $(i,j), \, (i',j') \in A^0_{2n}$. Remember that $\nu$ and $R\nu$ must affect opposite vertices by Lemma \ref{lem:correctionlemma}. The first and last affected vertices by $\nu$ are $i-n+1$ and $j-n$, while those for $R\nu$ are $i'-n+1$ and $j'-n$, therefore we have $i-n+1 \equiv_{2n} j'+n$ and $j-n \equiv_{2n} -i'+n-1$. This implies that the transposition $(i',j')$ coincides with $(-j-1,-i-1)$, since $f_\J$, $f_{\J'}$ and $f_{\J''}$ are bounded.
\end{proof}

\begin{theorem}\label{thm:mainresult2}
    The closure of a symplectic cell in $X(n,2n)^{sp}$ is the intersection of $X(n,2n)^{sp}$ with the closure of the corresponding affine cell in $X(n,2n)$.
\end{theorem}

\begin{proof}
    One inclusion follows easily from Proposition \ref{prop:orbitintersection}, so we move on to the other one. Consider two symplectic juggling patterns $\J' \ge \J$, i.e. such that $p_\J \in \overline{C_{\J'}}$. By \cite{ML1}, there exist mutations $\mu_1, \dots, \mu_m$ that link them. These correspond to elementary Bruhat relations in $(A^0_{2n},S)$ given by reflections $t_1, \dots, t_m$ such that $g_\J \le g_\J \circ t_1 \le g_\J \circ t_1 \circ t_2 \le \dots \le g_{\J'}$. The Bruhat order in $(C^0_{n+1},S_C)$ coincides, via the natural monomorphism, with the Bruhat order in $(A^0_{2n},S)$ \cite[Proposition 3.1]{karpman}. So there exist elementary Bruhat relations in $(C^0_{n+1},S_C)$, given by some reflections $t_1^C, t_2^C, \dots, t_{m'}^C$ whose product is the same as the product of the $t_i$'s. By Proposition \ref{prop:symplmutationsareorbits}, the product by each of these translates, in $X(n,2n)^{sp}$, to 1-dimensional $T^{sp}$-orbits, which are contained in the $G^{sp}$-orbits. Therefore we have found a sequence of symplectic juggling patterns  $\J'' = \J^{(m')} \ge \J^{(m'-1)} \ge \dots \ge \J^{(1)} \ge \J^{(0)} = \J$ such that $p_{\J^{(i)}}$ is in the closure of the $G^{sp}$-orbit $C^{sp}_{\J^{(i+1)}}$. This concludes the proof.
\end{proof}

\begin{corollary}
    Any two $(n,2n)$-symplectic juggling patterns $\J'' \ge \J$ such that there is no $\J' \in JP(n,2n)^{sp}$ with $\J'' \ge \J' \ge \J$ are at most two mutations away.
\end{corollary}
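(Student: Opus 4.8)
The plan is to reduce the statement to a standard fact about covering relations in the Bruhat order of the Coxeter group $(C^0_{n+1},S_C)$. If $\J'' = \J$ there is nothing to prove, so assume $\J'' \ne \J$; then the hypothesis says precisely that $\J'' \ge \J$ is a covering relation in the poset $JP(n,2n)^{sp}$. First I would transport this to the permutation side: the order-preserving bijection between $(n,2n)$-juggling patterns and $(n,2n)$-bounded affine permutations sends $JP(n,2n)^{sp}$ onto the set of symplectic bounded affine permutations (Lemma \ref{lem:correctsymmetry}), and, writing $f = g\circ\text{id}_n$, the condition $Rf = f$ is equivalent to $R^0g = g$, i.e.\ $g \in C^0_{n+1}$. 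Hence $JP(n,2n)^{sp}$ is order-isomorphic to the sub-poset $B \coloneqq \{\, g \in C^0_{n+1} \,\vert\, g\circ\text{id}_n \in \mathcal{B}_{(n,2n)}\,\}$ of $(C^0_{n+1},S_C)$, and our covering relation becomes a covering relation $g_\J \lessdot g_{\J''}$ inside $B$.

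Next I would check that $B$ is a lower order ideal of $(C^0_{n+1},S_C)$. Since $\mathcal{B}_{(n,2n)}$ is a lower order ideal of $A^0_{2n}$, and the Bruhat order on $C^0_{n+1}$ is the restriction of that on $A^0_{2n}$ by \cite[Proposition 3.1]{karpman}, any $h \in C^0_{n+1}$ with $h \le g$ for some $g \in B$ satisfies $h \le g$ in $A^0_{2n}$, so $h\circ\text{id}_n \in \mathcal{B}_{(n,2n)}$ and therefore $h \in B$. It follows that a covering relation inside $B$ is also a covering relation in $(C^0_{n+1},S_C)$: anything strictly between $g_\J$ and $g_{\J''}$ in $C^0_{n+1}$ would lie below $g_{\J''} \in B$ and hence belong to $B$, contradicting minimality. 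Thus $g_\J \lessdot g_{\J''}$ in $(C^0_{n+1},S_C)$.

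Finally I would invoke the elementary fact \cite{BjornerBrenti} that every covering relation $u \lessdot w$ in the Bruhat order of a Coxeter group satisfies $w = u\,t$ for a single reflection $t$ with $\ell(w) = \ell(u)+1$; in particular it is an elementary Bruhat relation in the terminology of this paper. By Lemma \ref{lem:symplmutationsarereflections} such an elementary Bruhat relation in $(C^0_{n+1},S_C)$ corresponds to a symplectic mutation $\J'' \overset{sp}{\longrightarrow} \J$, which by definition is either a single mutation $\J'' \longrightarrow \J$ or the composition of a mutation $\mu\colon \J'' \longrightarrow \J'$ with its correction $R\mu\colon \J' \longrightarrow \J$. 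In either case $\J''$ and $\J$ are at most two mutations apart, which proves the corollary.

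The only inputs that are not purely order-theoretic are the compatibility of the Bruhat orders on $C^0_{n+1}$ and $A^0_{2n}$ (already used in the proof of Theorem \ref{thm:mainresult2}) and Lemma \ref{lem:symplmutationsarereflections}; I do not expect a genuine obstacle here. The one point to keep straight is the bookkeeping: a single elementary Bruhat relation in type $C$ may unwind into \emph{two} ordinary mutations (a mutation together with its correction), and it is exactly this that produces the bound ``at most two''.
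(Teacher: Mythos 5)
Your argument is correct, and it fills in what the paper leaves as an implicit consequence of Theorem \ref{thm:mainresult2} (the paper gives no explicit proof of this corollary). Interpreting the hypothesis as a covering relation in $JP(n,2n)^{sp}$, transporting to $C^0_{n+1}$, observing that the symplectic bounded permutations form a lower order ideal (so a covering relation there is a covering relation in $C^0_{n+1}$), invoking the standard fact that Bruhat covers come from a single reflection, and then applying Lemma \ref{lem:symplmutationsarereflections} together with the definition of symplectic mutation is exactly the intended chain of deductions. The one step you make explicit that the paper glosses over — that the set of symplectic bounded permutations is downward closed in $(C^0_{n+1},S_C)$, which is what justifies both the "covering in $B$ implies covering in $C^0_{n+1}$" step here and the fact that all the intermediate permutations in the chain from the proof of Theorem \ref{thm:mainresult2} really do correspond to symplectic juggling patterns — is a genuine clarification rather than a divergence; it follows from Karpman's compatibility of Bruhat orders plus the fact that $\mathcal{B}_{(n,2n)}$ is a lower order ideal in $A^n_{2n}$, just as you say.
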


\subsection{Dimension of the cells}

\begin{definition}
    For any element $g$ of the Coxeter group $(C^0_{n+1},S_C)$ we denote with $\ell^{sp}(g)$ its length with respect to $S_C$ and we call it \emph{symplectic length} of $g$. For $f = g \circ \text{id}_n \in \mathcal{B}_{(n,2n)}$ symplectic, we define $\ell^{sp}(f) \coloneqq \ell^{sp}(g)$.
\end{definition}

\begin{remark}\label{rem:pairsmodulo2n}
    Given two pairs $(x,y)$ and $(x',y')$ in $L(g)$, we have $x \equiv_{2n} x'$ if and only if $x=x'$ by definition. Moreover, if also $y \equiv_{2n} y'$ then $y=y'$.
\end{remark}

\begin{prop}\label{prop:Lchange}
    Let $g, s_i \in A^0_{2k}$ be an affine permutation and a simple reflection such that $\ell(g\circ s_i) = \ell(g)+1$, where we consider $i \in [0,2n-1]$. Then the pair $(i,i+1)$ is contained in $L(g\circ s_i)$ but not in $L(g)$. Moreover, the map
    \begin{align*}
        \iota_i \colon L(g) \longrightarrow & L(g\circ s_i) \\ 
        (x,y) \longmapsto & (s_i(x), s_i(y))
    \end{align*}
    is well defined up to equivalence modulo $2n$ of the components, is injective, and is such that the diagram
    \[\begin{tikzcd}
        L(g) \arrow[r, "\Phi"] \arrow[d, "\iota_i", hook] & L(Rg) \arrow[d, "\iota_{-i-2}", hook] \\
        L(g\circ s_i) \arrow[r, "\Phi"] & L(Rg\circ s_{-i-2})
    \end{tikzcd}\]
    is commutative.
\end{prop}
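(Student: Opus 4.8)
The plan is to reduce everything to one standard fact about the affine symmetric group: for $g\in A^0_{2n}$ and a simple reflection $s_i$ with $i\in[0,2n-1]$, one has $\ell(g\cdot s_i)=\ell(g)+1$ if and only if $g(i)<g(i+1)$ (see \cite{BjornerBrenti}; this is also visible directly from the description of $L(\cdot)$ as a set of inversions). So under our hypothesis we may use $g(i)<g(i+1)$ throughout. Since $i\in[0,2n-1]$ and
$(g\cdot s_i)(i)=g(i+1)>g(i)=(g\cdot s_i)(i+1)$ while at the same time $g(i)<g(i+1)$, the pair $(i,i+1)$ lies in $L(g\cdot s_i)$ and not in $L(g)$; this settles the first assertion (with the evident reading $(i,i+1)=(2n-1,2n)$ when $i=2n-1$).

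For the map $\iota_i$, the starting point is the identity $(g\cdot s_i)(s_i(a))=g(a)$ for all $a\in\Z$, so that a pair $(x,y)$ satisfies $g(x)>g(y)$ exactly when $(s_i(x),s_i(y))$ satisfies $(g\cdot s_i)(s_i(x))>(g\cdot s_i)(s_i(y))$. It remains to check that, after applying $s_i$ and renormalizing the first component into $[0,2n-1]$ by a simultaneous shift of both entries by a multiple of $2n$ (this is the meaning of ``well defined up to equivalence modulo $2n$''), the inequality $x<y$ survives. Because $s_i$ fixes every residue class modulo $2n$ except $i$ and $i+1$, and shifts cannot reverse the order, the only pair $(x,y)$ with $x\in[0,2n-1]$ and $x<y$ whose order could be reversed is $(i,i+1)$ itself; but that pair is excluded from $L(g)$ by $g(i)<g(i+1)$. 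Hence $\iota_i$ maps $L(g)$ into $L(g\cdot s_i)$. In fact it is a bijection of $L(g)$ onto $L(g\cdot s_i)\setminus\{(i,i+1)\}$, with inverse the analogous map for $g\cdot s_i$ in place of $g$ (legitimate since $\ell(g)=\ell((g\cdot s_i)\cdot s_i)=\ell(g\cdot s_i)-1$, and $s_i(a+2n)=s_i(a)+2n$). Injectivity is then immediate: $s_i$ descends to a bijection of $\Z/2n\Z$, and once the first component is fixed in $[0,2n-1]$ the normalizing shift is uniquely determined, so $\iota_i(x,y)=\iota_i(x',y')$ forces $x=x'$ and then $y=y'$.

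For the commuting square the key ingredient is the identity $s_{-i-2}(-a-1)=-s_i(a)-1$ for all $a\in\Z$, which is obtained by unwinding $R^0(s_i)=s_{-i-2}$ against the formula $R^0(h)(a)=-h(-a-1)-1$ and replacing $a$ by $-a-1$. Now evaluate both composites on a pair $(x,y)\in L(g)$, working up to a simultaneous shift of both entries by a multiple of $2n$, which all three maps $\iota_i$, $\iota_{-i-2}$, $\Phi$ absorb. Going right then down, $\Phi(\iota_i(x,y))$ is equivalent to $\Phi(s_i(x),s_i(y))=(-s_i(y)-1,\,-s_i(x)-1)$. Going down then right, $\iota_{-i-2}(\Phi(x,y))$ is equivalent to $\iota_{-i-2}(-y-1,-x-1)=(s_{-i-2}(-y-1),\,s_{-i-2}(-x-1))=(-s_i(y)-1,\,-s_i(x)-1)$ by the identity. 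The two outputs thus agree up to a simultaneous shift by a multiple of $2n$; since both lie in $L(Rg\cdot s_{-i-2})$, whose elements have first component in $[0,2n-1]$, and each such shift-class has a unique representative there, the two outputs are literally equal, so the square commutes.

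The main obstacle is the bookkeeping in the middle paragraph: verifying that conjugating a pair of $L(g)$ by $s_i$ and renormalizing never destroys the inequality $x<y$ — this is precisely where the hypothesis $\ell(g\cdot s_i)=\ell(g)+1$, i.e. $g(i)<g(i+1)$, is essential — together with consistently tracking that $\iota_i$, $\iota_{-i-2}$ and $\Phi$ are a priori only defined up to a simultaneous shift by $2n$, so that every equality must be read modulo that ambiguity until the canonical representative with first component in $[0,2n-1]$ is selected.
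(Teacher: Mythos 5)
Your proof is correct and follows essentially the same route as the paper's: both reduce the first assertion to the standard inversion-set description of length (Proposition~8.3.1 in \cite{BjornerBrenti}, equivalently $\ell(gs_i)=\ell(g)+1\iff g(i)<g(i+1)$), and both verify commutativity of the square from the key fact $R^0(s_i)=s_{-i-2}$. The one small difference is that you package the commutativity check uniformly via the identity $s_{-i-2}(-a-1)=-s_i(a)-1$ and the normalization-by-simultaneous-shift bookkeeping, whereas the paper checks it case by case on representative pairs; your version is arguably tidier but not a substantively different argument.
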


\begin{proof}
    Observe that $\ell(g\circ s_i) = \ell(g)+1$ is equivalent to $g(i) < g(i+1)$. Aside from checking the commutativity of the square, the statement is proved by \cite[Proposition 8.3.1]{BjornerBrenti}. We are left with a simple computation, the crucial point of which is that $R(s_i)=s_{-i-2}$. There is nothing to check if $x,y \notin {i, i+1}$, so suppose for example $x=i$ and that $y \le 2n-1$. By assumption, $y$ must differ from $i+1$. We have
    \begin{align*}
        \Phi(i,y) &= (2n-y-1, 2n-i-1) \\
        \iota_i (i,y) &= (i+1,y)
    \end{align*}
    therefore
    \begin{align*}
        \iota_{-i-2}\circ \Phi(i,y) &= (2n-y-1,2n-i-2) \\
        \Phi \circ \iota_i (i,y) &= (2n-y-1,2n-i-2) \, .
    \end{align*}
    The argument is similar in all the other nontrivial cases, listed below.
    \begin{itemize}
    	\item $x=i$ and $y \ge 2n$;
    	\item $x=i+1$ and $y \le 2n-1$;
    	\item $x=i+1$ and $y \ge 2n$;
    	\item $y=i$;
    	\item $i<2n-1$ and $y=i+1$;
    	\item $y=i+1=2n$.
    \end{itemize}
\end{proof}

\begin{remark}
    For a symplectic $(n,2n)$-bounded affine permutation $f$, the set $L(f)$ is equipped with an equivalence relation $\sim_\Phi$, where $(i,j) \sim_\Phi (i',j')$ if and only if $(i',j')$ is equal to either $(i,j)$ or $\Phi(i,j)$.
\end{remark}
\begin{lemma}\label{lem:simplength}
    The symplectic length of $f \in \mathcal{B}_{(n,2n)}$ equals the number of $\sim_\Phi$-equivalence classes in $L(f)$.
\end{lemma}

\begin{proof}
    Let $g \in C^0_{n+1}$ be such that $f = g \circ \text{id}_n$. We can equivalently prove that $\ell^{sp}(g)$ is equal to the number of $\sim_\Phi$-equivalence classes in $L(g)$. We proceed by induction on $\ell^{sp}(g)$. If it is 0 there is nothing to prove. If instead $\ell^{sp}(g)=1$ then there are two cases:
    \begin{itemize}
        \item $g \in \{s_{-1}, s_{n-1}\}$, which implies $L(g)$ has one element, so one equivalence class;
        \item $g=s_is_{-i-2}$ for $i \in [0, n-2]$, so $L(g) = \{(i,i+1),(2n-i-2, 2n-i-1)\}$ and the two pairs make one equivalence class.
    \end{itemize}
    For the inductive step there is also more than one case. First suppose $\ell^{sp}(g\circ s_{-1})=\ell^{sp}(g)+1$. This implies that $\ell(g\circ s_{-1}) =\ell(g)+1$ and that $(2n-1,2n) \notin L(g)$, so by the previous proposition $L(g \circ s_{-1}) = \iota_{-1} L(g) \cup \{(2n-1,2n)\}$. Since $\Phi$ preserves this singleton, the number of classes in $L(g \circ s_{-1})$ is $\ell^{sp}(g)+1$ by induction. For $g \circ s_{n-1}$ things work analogously. Now let us fix $i \in [0,n-2]$ and suppose $\ell^{sp}(g \circ r_i) = \ell^{sp}(g)+1$. This means that $\ell(g \circ r_i)=\ell(g \circ s_i)+1=\ell(g \circ s_{-i})+1 = \ell(g)+2$. We apply Proposition \ref{prop:Lchange} twice and obtain
    \[\begin{tikzcd}
        L(g) \arrow[r, "\Phi"] \arrow[d, "\iota_{i}", hook] & L(g) \arrow[d, "\iota_{-i-2}", hook] \\
        L(g\circ s_i) \arrow[r, "\Phi"] \arrow[d, "\iota_{-i-2}", hook] & L(g\circ s_{-i-2}) \arrow[d, "\iota_i", hook]\\
        L(g\circ r_i) \arrow[r, "\Phi"] & L(g\circ r_{i})
    \end{tikzcd}\]
    where the two elements in $L(g \circ r_i)$ missing from the image of $L(g)$ are $(i,i+1)$ and $(2n-i-2, 2n-i-1)$, and they are in the same $\sim_\Phi$-equivalence class. Observe that since $i \in [0,n-2]$, $i, i+1, 2n-i-2$ and $2n-i-1$ are necessarily 4 different numbers.
\end{proof}

\begin{corollary}\label{cor:lengthplusfixed}
	For any symplectic $(n,2n)$-bounded affine permutation $f$, the number of pairs in $L(f)$ fixed by $\Phi$ is equal to
    \[2 \, \ell^{sp}(f) - \ell(f) \, .\]
\end{corollary}

\begin{lemma}\label{lem:howmanyfixed}
    Given $f \in \mathcal{B}_{(n,2n)}$ symplectic, $L(f)^\Phi$ is in bijection with
    \[\{ i \in [0, 2n-1] \, \, \vert \, \, f(i)-i > n \}\]
\end{lemma}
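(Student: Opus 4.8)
The plan is to exhibit an explicit bijection between $L(f)^\Phi$, the set of $\Phi$-fixed pairs in $L(f)$, and the index set $\{i \in [0,2n-1] \mid f(i) - i > n\}$. Recall that $\Phi$ sends $(x,y)$ to $(2n-y-1,2n-x-1)$ or $(4n-y-1,4n-x-1)$ depending on whether $y \in [0,2n-1]$ or $y \in [2n,4n-1]$, and that a pair $(x,y) \in L(f)$ with $x \in [0,2n-1]$ always satisfies the chain \eqref{eq:ij-inequality}, so $y$ is one of at most two integers in a fixed residue class modulo $2n$. First I would analyze when $\Phi(x,y) = (x,y)$: writing $y = x + c$ with $0 < c \le 2n$ (the case $y \in [0,2n-1]$) or the shifted version, the fixed-point equation becomes $2n - y - 1 = x$, i.e. $x + y = 2n - 1$, together with the consistency requirement that $y \in [0,2n-1]$; in the other case one gets $x + y = 4n - 1$ with $y \in [2n,4n-1]$. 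In either case the $\Phi$-fixed pairs are exactly those of the form $(x, 2n-1-x+2n\cdot\epsilon)$ lying in $L(f)$, parametrized by $x \in [0,2n-1]$.

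Next I would pin down, for a given $x \in [0,2n-1]$, whether the candidate pair $(x, \tilde{x}_{\text{shift}})$ — with second coordinate the unique representative of $2n-1-x$ modulo $2n$ that is strictly greater than $x$ and forced by \eqref{eq:ij-inequality} — actually lies in $L(f)$. The defining condition $f(x) > f(y)$ together with $y = 2n-1-x+2n$ (since $2n-1-x < x$ would fail $x<y$ exactly when $x \ge n$, so we must take $y = 4n-1-x$ when $x \ge n$; and when $x < n$ there is no valid $\Phi$-fixed pair with first coordinate $x$, because $2n-1-x \ge n > x$ would force the \emph{other} representative). Let me reorganize: the first coordinate of a $\Phi$-fixed pair, being the smaller one, must satisfy $x < y = 2n-1-x$ or $x < y = 4n-1-x$; the symmetric point of the involution forces $x + y \equiv 2n-1 \pmod{2n}$, and combined with $x<y\le x+2n$ this yields $y = 4n-1-x$ precisely when $x \ge n$ and $y = 2n-1-x$ precisely when $x < n$ — but in the latter subcase $y = 2n-1-x > x$ needs $x < n$, which holds, yet then $\Phi(x,y)$ would have first coordinate $2n-y-1 = x$, consistent. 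So I need to be careful and will split on $x<n$ versus $x\ge n$. In both subcases the membership $(x,y)\in L(f)$ reduces, via \eqref{eq:symplecticpermutation} (which gives $f(y) = 4n - f(x) - 1$ or $2n - f(x) - 1$ after substituting $y$), to the single inequality $f(x) > 2n - f(x) - 1 + (\text{shift})$, i.e. to $2f(x) - 2x > 2n$, i.e. $f(x) - x > n$. This is the computation that makes the bijection work, and it is the heart of the argument.

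The bijection is then $(x,y) \mapsto x$ from $L(f)^\Phi$ to $\{i \in [0,2n-1] \mid f(i)-i>n\}$, with inverse $i \mapsto (i, y_i)$ where $y_i$ is the forced second coordinate; injectivity is immediate from Remark \ref{rem:pairsmodulo2n}, and surjectivity follows because for each such $i$ the pair $(i,y_i)$ was just shown to lie in $L(f)$ and to be $\Phi$-fixed. The main obstacle I anticipate is purely bookkeeping: getting the two modular cases (whether the partner index $y$ lands in $[0,2n-1]$ or in $[2n,4n-1]$) and the associated shifts in $\Phi$ exactly right, and checking that in each case the symplectic relation \eqref{eq:symplecticpermutation} combined with boundedness \eqref{eq:ij-inequality} collapses the membership condition to the clean inequality $f(i)-i>n$. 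No deep idea is needed beyond this; it is a careful unwinding of definitions.
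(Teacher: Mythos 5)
Your reduction of the membership condition to $f(x)-x>n$ contains a computational error, and it is precisely this error that makes the claimed bijection appear to work. Work the case $x\in[0,n-1]$, $y=2n-1-x$, carefully: the symplectic relation \eqref{eq:symplecticpermutation} gives $f(y)=f(2n-x-1)=4n-1-f(x)$, so $(x,y)\in L(f)$ if and only if $f(x)>4n-1-f(x)$, i.e.\ $f(x)\ge 2n$, i.e.\ $f(x)-x\ge 2n-x$. Since $x\le n-1$ this forces $f(x)-x\ge n+1>n$, which is the forward implication the paper does prove. But it is \emph{not} equivalent to $f(x)-x>n$: for $x<n-1$ the bound $2n-x$ is strictly larger than $n+1$. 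Your passage ``$f(x)>2n-f(x)-1+(\text{shift})$, i.e.\ $2f(x)-2x>2n$'' does not follow for any natural ``shift''; the actual inequality has no $-2x$ in it and reads $2f(x)>4n-1$ (respectively $2f(x)>6n-1$ when $x\ge n$, $y=4n-1-x$). In short, the claimed inverse $i\mapsto(i,y_i)$ need not land in $L(f)$ when $f(i)-i>n$, and the surjectivity step of your argument — which you say ``follows because for each such $i$ the pair $(i,y_i)$ was just shown to lie in $L(f)$'' — rests on the erroneous simplification.

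This is not merely a bookkeeping slip in your write-up; it exposes that the statement being proved is not correct as written. Take $n=2$ and the symplectic bounded affine permutation $f$ with $f(0)=3$, $f(1)=1$, $f(2)=6$, $f(3)=4$, extended by $f(i+4)=f(i)+4$; this is $f_{\J''}$ for the juggling pattern $\J''=\begin{smallmatrix}&\{2,4\}&\\\{1,4\}&&\{2,3\}\\&\{3,4\}&\end{smallmatrix}$ of Example \ref{ex:symplecticmutation}. Direct computation gives $L(f)=\{(0,1),(2,3),(2,5)\}$, of which only $(2,5)$ is $\Phi$-fixed, so $|L(f)^\Phi|=1$; yet $f(0)-0=3>2$ and $f(2)-2=4>2$, so $|\{i\in[0,3]\mid f(i)-i>n\}|=2$. (This is consistent with Corollary \ref{cor:lengthplusfixed}: $\dim C^{sp}_{\J''}=2$ from the moment graph in the Appendix, $\ell(f)=3$, so $|L(f)^\Phi|=2\cdot 2-3=1$.) The paper's own proof of the lemma has the same gap: it verifies the forward implication $(i,j)\in L(f)^\Phi\Rightarrow f(i)-i>n$ correctly, then declares the bijection ``obvious'' without checking that $(i,2n-i-1)$ (resp.\ $(i,4n-i-1)$) actually belongs to $L(f)$. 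The corrected index set should be $\{i\in[0,n-1]\mid f(i)\ge 2n\}\cup\{i\in[n,2n-1]\mid f(i)\ge 3n\}$, not $\{i\in[0,2n-1]\mid f(i)-i>n\}$; they coincide for the maximal juggling patterns treated in the subsequent example (where $f(i)-i\in\{0,2n\}$), but not in general.
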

\begin{proof}
Fix a pair $(i,j) \in L(f)^\Phi$ and let $m_i = f(i)-i$, which satisfies $0 < m_i \le 2n$ since $f$ is bounded. Assume first that $j \le 2n-1$, so $(i,j)$ is fixed by $\Phi$ if and only if $j = 2n-i-1$ with $i \in [0,n-1]$, because $2n-i-1 = j > i$. Since $f$ is symplectic, we have $f(2n-i-1) - (2n-i-1) = 2n-m_i$ from \eqref{eq:symplecticpermutation}. It follows that
\[4n-i-m_i-1 = f(2n-i-1) < f(i) = i + m_i < 2n-i-1 +m_i\]
which implies $m_i>n$. If instead $j \ge 2n$, it must be equal to $4n-i-1$ and $i$ must be in $[n, 2n-1]$, because of \eqref{eq:ij-inequality}. This time we get
\[6n - i - m_i-1 = f(2n-i-1) +2n = f(4n-i-1) < f(i) = i + m_i < 4n-i-1+m_i\]
which implies once again $m_i > n$. The bijection from the statement is now obvious: $(i,j)$ in $L(f)$ is mapped to $i$, and in the opposite direction $i$ is mapped to $(i,2n-i-1)$ if $i \in [0,n-1]$ and to $(i,4n-i-1)$ if $i \in [n,2n-1]$.
\end{proof}

\begin{remark}\label{rem:maxfixed}
    Notice that this set has cardinality at most $n$, by \eqref{eq:symplecticpermutation}.
\end{remark}

\begin{example}
Let $\J$ be a maximal $(n,2n)$-juggling pattern, i.e. one for which the elements of $J_i$ are the representatives in $[2n]$ of the residue classes of $\{\bar{j}+i \, \, \vert \, \,  j \in J_0\}$. This means that $f_\J(i) = i+2n$ if and only if $2n \in J_i$ and $f_\J(i) = i$ otherwise. Let $J$ denote the set of representatives in $[0,2n-1]$ of these vertices, which has cardinality $n$. Thus computing the length of $f_\J$ is quite straightforward:
\[\ell(f_\J) = \big \vert \{(i,j) \in [0,2n-1] \times \Z \, \, \vert \, \, i \in J \, \land \, i < j < i+2n \, \, \land \, \, j \notin J \! \! \! \! \mod 2n\} \big \vert = n^2 = \dim \Gr(n,2n) \, .\]
Now suppose that $\J$ is symplectic. The pairs $(i,j)$ fixed by $\Phi$ are in bijection with $J$ because of Lemma \ref{lem:howmanyfixed}, so they are $n$ in number; we conclude that $\ell^{sp}(f_\J) = \frac{n(n+1)}{2} = \dim X(n,2n)^{sp}$, as stated in \cite[Proposition 4.12]{ours}. The maximal juggling patterns are the only ones with this symplectic length, by Corollary \ref{cor:lengthplusfixed} and Remark \ref{rem:maxfixed}.
\end{example}

\begin{theorem}\label{thm:mainresult3}
    For any symplectic $(n,2n)$-juggling pattern $\J$, the dimension of $C_\J^{sp}$ is equal to the length of the corresponding bounded affine permutation $f_\J$ in $C^0_{n+1}$.
\end{theorem}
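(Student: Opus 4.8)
The plan is to compute $\dim C_\J^{sp}$ by combining three ingredients already in hand: (i) the affine coordinates for $C_\J^{sp}$ are the $\overset{sp}{\sim}$-equivalence classes of triples $(a,i,j)$ described in Corollary \ref{cor:symplcoordinates}; (ii) the affine coordinates for $C_\J$ are the $\sim$-classes of the same triples, so that $\dim C_\J = \ell(f_\J)$ by \cite[Theorem 7.5]{ML1}; and (iii) the relation $2\cdot s\ell(f_\J) = \ell(f_\J) + |L(f_\J)^\Phi|$ from Corollary \ref{cor:lengthplusfixed}. The whole proof reduces to a careful count of how the coordinate-indexing set collapses when we pass from the relation $\sim$ to the finer relation $\overset{sp}{\sim}$ generated by $\sim$ together with $(a,i,j)\overset{sp}{\sim}(-a,\tilde\jmath,\tilde\imath)$.

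\textbf{Step 1: set up the counting bijection.} First I would fix a symplectic $\J$ and write $P$ for the set of $\sim$-classes of valid triples, so $|P|=\dim C_\J = \ell(f_\J)$, and write $P^{sp}$ for the set of $\overset{sp}{\sim}$-classes, so $|P^{sp}| = \dim C_\J^{sp}$. The new involution $(a,i,j)\mapsto(-a,\tilde\jmath,\tilde\imath)$ descends to an involution $\iota$ on $P$: one checks it is compatible with the shift $(a,i,j)\sim(a+1,i+1,j+1)$ exactly as in the verification that $R\J$ is again a juggling pattern, and that it sends valid triples for $\J$ to valid triples for $R\J=\J$ using that $\J$ is symplectic. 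Then $P^{sp} = P/\iota$, hence $|P^{sp}| = \tfrac12\big(|P| + |P^\iota|\big)$, where $P^\iota$ is the set of $\iota$-fixed classes. Comparing with Corollary \ref{cor:lengthplusfixed}, it therefore suffices to prove $|P^\iota| = |L(f_\J)^\Phi|$.

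\textbf{Step 2: match fixed classes on both sides.} This is the heart of the argument. I would use the explicit correspondence between the coordinates of $C_\J$ and the length set $L(f_\J)$ coming from \cite[Theorem 7.5]{ML1} and \eqref{eq:jugglingfrombounded}: a triple $(a,i,j)$ (with $j\in J_a$, $i\notin J_a$, $i>j$) corresponds to a pair in $L(f_\J)$, and I would track what $\iota$ does to that pair under this correspondence. The claim is that the induced map on $L(f_\J)$ is precisely $\Phi$ (up to the bookkeeping shift by $\mathrm{id}_n$ relating $f_\J$ and $g_\J$), which would follow from the fact that $\Phi$ is built from $y\mapsto 2n-y-1$ on the "row" index and $x\mapsto 2n-x-1$ on the "column" index — matching the $i\mapsto\tilde\imath$, $j\mapsto\tilde\jmath$ of $\iota$ — together with $R(f_\J)=f_{R\J}=f_\J$ from Lemma \ref{lem:correctsymmetry}. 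Granting that, $P^\iota$ and $L(f_\J)^\Phi$ are in bijection, giving $\dim C_\J^{sp} = \tfrac12(\ell(f_\J)+|L(f_\J)^\Phi|) = s\ell(f_\J)$, which is the assertion.

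\textbf{The main obstacle} I anticipate is Step 2: making the identification between the $\overset{sp}{\sim}$-quotient of the coordinate triples and the $\Phi$-quotient of $L(f_\J)$ completely rigorous. The correspondence between juggling-pattern coordinates and the length set in \cite{ML1, LaniniPuetz} is given somewhat implicitly, and one must be careful that the involution $\iota$ really transports to $\Phi$ on the nose rather than to some other involution with the same number of fixed points — although, in the worst case, a fixed-point count alone (comparing $|P^\iota|$ with the description of $L(f_\J)^\Phi$ in Lemma \ref{lem:howmanyfixed} as indexed by $\{i : f_\J(i)-i>n\}$, i.e. the "long" segments of the coefficient quiver, which are manifestly the self-paired segments) would already suffice to close the argument. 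A secondary, more routine point is verifying that $\iota$ is well defined on $P$ and that $P^{sp}=P/\iota$, i.e. that no coordinate triple valid for $\J$ is identified with an invalid one; this is the same bilinearity/pairing bookkeeping carried out in the proof of Lemma \ref{lem:tauforcells} and should go through without surprises.
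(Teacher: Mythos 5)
Your plan is genuinely different from the paper's: you want a one-shot count, identifying $\dim C_\J^{sp}$ with the number of $\overset{sp}{\sim}$-classes of coordinate triples and then matching $\iota$-fixed classes against $\Phi$-fixed inversions to invoke Corollary \ref{cor:lengthplusfixed}. The paper instead argues by induction on $s\ell(f_\J)$: at each step $\J'' \overset{sp}{\longrightarrow}\J$ coming from a type-$C$ simple reflection, it checks by hand that exactly one new $\overset{sp}{\sim}$-class of triples appears (either one new $\sim$-class when $\ell$ jumps by $1$, or two new $\sim$-classes that get identified by $\overset{sp}{\sim}$ when $\ell$ jumps by $2$). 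The paper therefore never needs a global bijection between coordinate-triple classes and the inversion set $L(f_\J)$ — it only uses the equality of cardinalities $\dim C_\J=\ell(f_\J)$ from \cite{ML1} as a black box.

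Your Step~1 is on solid ground: $\iota$ is a well-defined involution on $P$ (it commutes with the shift $(a,i,j)\sim(a+1,i+1,j+1)$ since $\widetilde{x+1}=\tilde x-1$, and it preserves validity of triples because $J_{-a}=R(J_a)$ when $\J$ is symplectic), so $|P^{sp}|=\tfrac12(|P|+|P^\iota|)$. Your Step~2, however, is where the argument is not yet a proof, and you say so yourself. The claim $|P^\iota|=|L(f_\J)^\Phi|$ requires either (a) an explicit $\iota$-to-$\Phi$-equivariant bijection $P\longrightarrow L(f_\J)$, which is stronger than what is extracted from \cite{ML1} in this paper, or (b) an independent count of $\iota$-fixed classes. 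A $\sim$-class $[a,i,j]$ is $\iota$-fixed iff there is $c\in\Z$ with $i+c=\tilde\jmath$, $j+c=\tilde\imath$ and $a+c\equiv -a\pmod{2n}$; the first two conditions both force $c=2n+1-i-j$, and combined with the third this pins down a parity/congruence constraint on $(a,i,j)$ that still has to be matched against Lemma \ref{lem:howmanyfixed}'s description of $L(f_\J)^\Phi$ as indexed by the ``long'' values $\{i : f_\J(i)-i>n\}$. This matching is plausible but not automatic, and your ``worst case'' fallback is exactly the missing computation, not a shortcut around it. If you fill in that count the argument closes cleanly and is arguably more illuminating than the paper's induction; as written, it is an outline with one genuine hole in the middle.
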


\begin{proof}
    We start with a few observations. Let $\mu \colon \J' \longrightarrow \J$ be a mutation corresponding to a simple reflection in $A^0_{2n}$, and let $j \in J'_a$ be one of the removed elements, replaced with $i \in J_a$ (therefore $i>j$). Since $\dim C_{\J'} = \dim C_\J +1$ by \cite[Lemma 7.4]{ML1}, the affine coordinates for $C_{\J'}$ are one more than those for $C_\J$.  Remember that the affine coordinates for the cell $C_\J$ are parametrized by triples $(b,x,y)$ with $b \in \Z_{2n}$, $y \in J_b$, $x \notin J_b$, $x>y$ modulo the equivalence relation $(b,x,y)\sim (b+1,x+1,y+1)$. Therefore, the coordinates for both cells $C_{\J'}$ and $C_\J$ are parametrized by the same equivalence classes of triples, except $(a,i,j)$, because the mutation removes only that one segment and the conditions on the triples are preserved by the mutation. On the other hand, from Corollary \ref{cor:symplcoordinates} we know that the affine coordinates for $C^{sp}_\J$ are parametrized by the $\overset{sp}{\sim}$-equivalence classes of such triples, i.e. we also require $(b,x,y) \overset{sp}{\sim} (-b, \tilde{y}, \tilde{x})$.
    
    \smallskip 
    
    We proceed by induction on $\ell^{sp}(f_\J)$. The base case $f_\J = \text{id}_n$, i.e. when $\ell^{sp}(f_\J)=0$, is trivial. Now let $\J'' \overset{sp}{\longrightarrow} \J$ be a symplectic mutation corresponding to a simple reflection in $C^0_{n+1}$, i.e.
    \begin{equation}\label{eq:a}
    f_{\J''} = r_i \circ f_\J \, \land \, \ell^{sp}(f_{\J''}) = \ell^{sp}(f_\J)+1\end{equation}
    for some $i \in [-1,n-1]$, which implies $\ell(f_{\J''})=\ell(f_\J) +1$ if $i = -1, n-1$ and $\ell(f_{\J''})=\ell(f_\J) +2$ otherwise. We want to show that the claim holds for $\J''$, assuming that it holds for $\J$. In the first case, $\J'' \overset{sp}{\longrightarrow} \J$ is a single mutation $\mu \colon \J'' \longrightarrow \J$ which connects two symplectic juggling patterns. Thus, the segment which $\mu$ moves downwards must pass through either the vertex 0 or the vertex $n$, where it must replace some $j$ in $J''_a$, for $a \in \{0,n\}$, with $\tilde{j}>j$. In addition, if $\mu$ replaces some number $x$ with $x+s$ on a vertex $b$, then it must replace $\tilde{x}-s$ with $\tilde{x}$ on the vertex $-b$, and $s$ must be equal to $\tilde{j}-j$. Therefore the affine coordinates for $C_{\J''}^{sp}$ are parametrized by the same $\overset{sp}{\sim}$-classes of triples as those for $C_\J^{sp}$, plus the class of $(a,\tilde{j},j)$. In the second case, $\J'' \overset{sp}{\longrightarrow} \J$ is the composition of a pair of corrections $\nu \colon \J'' \longrightarrow \J'$ and $R\nu \colon \J' \longrightarrow \J$. If $\nu$ replaces some $y \in \J''_b$ with $x>y$, then $R\nu$ must replace $\tilde{x} \in \J'_{-b}$ with $\tilde{y}$, since $\J'$ is not symplectic. The $\sim$-classes of triples that parametrize the coordinates for $C_{\J''}$ are exactly those for $C_\J$ plus two more, which are are $(b,x,y)$ and $(-b, \tilde{y}, \tilde{x})$. Therefore, when taken modulo $\overset{sp}{\sim}$, $C_{\J''}^{sp}$ has one more equivalence class than $C_\J^{sp}$. To conclude, in both cases we have
    \[\dim C_{\J''}^{sp} = \dim C_\J^{sp}+1\]
    and by inductive hypothesis we have $\dim C_\J^{sp}= \ell^{sp}(f_\J)$, therefore it holds for $\J''$ as well.
\end{proof}

\begin{appendices}
\section{Equivariant cohomology}\label{appendix}
We want to compute the $T^{sp}$-equivariant cohomology ring for $X(2,4)^{sp}$. Let us recall how to compute $H_T^\bullet(X)$ for a GKM variety $X$ equipped with the action of a torus $T$. First, we define the \emph{moment graph} of the $T$-action on $X$. The underlying graph is the 1-skeleton of the $T$-action: its vertex set is the fixed-point set $X^T$, and for every 1-dimensional orbit $O$ there is an edge 
between its closure points $x_{\scaleto{O}{3pt}}$ and $y_{\scaleto{O}{3pt}}$, labeled by a character $\chi_{\scaleto{O}{3pt}} \colon T \rightarrow \C^*$. This character encodes information about the $T$-action on $O$, i.e. gives an isomorphism $O \cong \C \P^1$ that is $T$-equivariant. Equivalently, the edge can be labeled by the corresponding linear map $\alpha_{\scaleto{O}{3pt}} \colon \mathfrak{t} \longrightarrow \C$, where $\mathfrak{t}$ is the Lie algebra of $T$. The character on each edge is unique up to sign, and the sign is irrelevant for the purposes of computing equivariant cohomology. Then, by \cite[Theorem 1.2.2]{GKM}, the pullback map
\[H^\bullet_T(X) \longrightarrow H^\bullet_T(X^T) = \bigoplus_{x \in X^T}  H^\bullet_T(\text{pt}) \cong \bigoplus_{x \in X^T} S(\mathfrak{t}^*) \]
is injective with image
\[\bigl \{\left(f_x\right)_{x \in X^T} \, \, \vert \, \, f_{x_{\scaleto{O}{3pt}}} - f_{y_{\scaleto{O}{3pt}}} \in \alpha_{\scaleto{O}{3pt}} \! \cdot \! S(\mathfrak{t}^*) \, \, \text{for any 1-dimensional orbit} \, \, O \bigr\}.\]
Here $S(\mathfrak{t}^*)$ is the symmetric algebra over the dual vector space $\mathfrak{t}^*$, i.e. the algebra of polynomials over $\mathfrak{t}$.

\smallskip

Now let us go back to our case. When $n=2$ the torus $T^{sp}$ has dimension 3, and we denoted its generic element with $(z, \gamma_0, \gamma_1, \gamma_1^{-1}, \gamma_0^{-1})$ for $z, \gamma_0, \gamma_1 \in \C^*$. Therefore, the natural basis for $\mathfrak{t} = \{(a,b,c,-c,-b) \, \, \vert \, \, a,c,b \in \C\}$, is given by $(1,0,0,0,0)$, $(0,1,0,0,-1)$ and $(0,0,1,-1,0)$. The polynomial algebra we are considering is $\C[x,y_0,y_1]$, where $x, y_0,y_1$ is the basis dual to the one just above. The moment graph for this action on $X(2,4)^{sp}$, shown below, has the symplectic juggling patterns as vertices, which share an edge if and only if there is a symplectic mutation between them, by Proposition \ref{prop:symplmutationsareorbits}. Each juggling pattern $\J$ is written as $\begin{smallmatrix}
    & J_0 & \\
    J_3 & & J_1 \\
    & J_2 &
\end{smallmatrix}$, and the set $\{i,j\}$ is shortened to simply $ij$.
    \begin{center}
    \begin{tikzpicture}
     \begin{scope}[every node/.style={rectangle, draw=black!0, fill=black!0,
     very thin,inner sep=2pt,minimum size=1mm
     }, scale=1.65]
    \node (1) at (0,0) {$\begin{smallmatrix}
        &34& \\
        34&&34 \\
        &34&
    \end{smallmatrix}$};
    \node (2) at (-2,2) {$\begin{smallmatrix}
        &24& \\
        34&&34 \\
        &34&
    \end{smallmatrix}$};
    \node (3) at (0,2) {$\begin{smallmatrix}
        &34& \\
        24&&24 \\
        &34&
    \end{smallmatrix}$};
    \node (4) at (2,2) {$\begin{smallmatrix}
        &34& \\
        34&&34 \\
        &24&
    \end{smallmatrix}$};
    \node (5) at (-4,4) {$\begin{smallmatrix}
        &24& \\
        14&&23 \\
        &34&
    \end{smallmatrix}$};
    \node (6) at (-2,4) {$\begin{smallmatrix}
        &13& \\
        24&&24 \\
        &34&
    \end{smallmatrix}$};
    \node (7) at (0,4) {$\begin{smallmatrix}
        &24& \\
        34&&34 \\
        &24&
    \end{smallmatrix}$};
    \node (8) at (2,4) {$\begin{smallmatrix}
        &34& \\
        24&&24 \\
        &13&
    \end{smallmatrix}$};
    \node (9) at (4,4) {$\begin{smallmatrix}
        &34& \\
        23&&14 \\
        &24&
    \end{smallmatrix}$};
    \node (10) at (-3,6) {$\begin{smallmatrix}
        &12& \\
        14&&23 \\
        &34&
    \end{smallmatrix}$};
    \node (11) at (-1,6) {$\begin{smallmatrix}
        &24& \\
        13&&13 \\
        &24&
    \end{smallmatrix}$};
    \node (12) at (1,6) {$\begin{smallmatrix}
        &13& \\
        24&&24 \\
        &13&
    \end{smallmatrix}$};
    \node (13) at (3,6) {$\begin{smallmatrix}
        &34& \\
        23&&14 \\
        &12&
    \end{smallmatrix}$};
\end{scope}

\begin{scope}[every edge/.style= 
              {draw=black,thick}, every edge quotes/.append style={font=\scriptsize}, sloped]
\path [-] (1) edge["$(2;0;2)$"'] (2);
\path [-] (1) edge["$(2;1;-1)$"] (3);
\path [-] (1) edge["$(2;-2;0)$"'] (4);
\path [-] (2) edge["$(4;1;1)$"'] (5);
\path [-] (2) edge[] node[above, pos=0.3]  {\scriptsize $(2;1;-1)$} (6);
\path [-] (2) edge[] node[below, pos=0.8]  {\scriptsize $(2;-2;0)$} (7);
\path [-] (3) edge[] node[above, pos=0.7]  {\scriptsize $(2;0;2)$} (5);
\path [-] (3) edge[] node[below, pos=0.75]  {\scriptsize $(6;2;0)$} (6);
\path [-] (3) edge[] node[below, pos=0.75]  {\scriptsize $(6;0;-2)$} (8);
\path [-] (3) edge[] node[above, pos=0.7]  {\scriptsize $(2;-2;0)$} (9);
\path [-] (4) edge[] node[below, pos=0.8]  {\scriptsize $(2;0;2)$} (7);
\path [-] (4) edge[] node[below, pos=0.3]  {\scriptsize $(2;1;-1)$} (8);
\path [-] (4) edge["$(4;-1;-1)$"'] (9);
\path [-] (5) edge["$(6;2;0)$"] (10);
\path [-] (5) edge[] node[above, pos=0.75]  {\scriptsize $(2;-2;0)$} (11);
\path [-] (6) edge[] node[above, pos=0.75]  {\scriptsize $(2;0;2)$} (10);
\path [-] (6) edge[] node[above, pos=0.3]  {\scriptsize $(6;0;-2)$} (12);
\path [-] (7) edge[] node[below, pos=0.2]  {\scriptsize $(6;-1;1)$} (11);
\path [-] (7) edge[] node[below, pos=0.2]  {\scriptsize $(2;1;-1)$} (12);
\path [-] (8) edge[] node[above, pos=0.75]  {\scriptsize $(6;2;0)$} (12);
\path [-] (8) edge[] node[above, pos=0.75]  {\scriptsize $(2;-2;0)$} (13);
\path [-] (9) edge[] node[below, pos=0.58]  {\scriptsize $(2;0;2)$} (11);
\path [-] (9) edge["$(6;0;-2)$"] (13);
\path [-] (1) edge[bend left = 12] node[above, pos=0.3]  {\scriptsize $(4;1;1)$} (10);
\path [-] (1) edge[bend right=12] node[above, pos=0.3]  {\scriptsize $(4;-1;-1)$} (13);
\end{scope}
    \end{tikzpicture}
    \end{center}
The edge label $(a,b,c)$ stands for the polynomial $ax+by_0+cy_1$. Let us denote the vertices with numbers 1-13 from the bottom up and from left to right. A basis $(\xi_i)_{i=1}^{13}$ for the equivariant cohomology is given by equivariant classes of the closures of the cells $C_\J^{sp}$, and is shown in the following tables. Each entry shows the component on vertex $j$ (which labels the column) of the basis element $\xi_i$. Each table shows the polynomials that make up the basis elements of a given degree, as stated in the top left corner. Since the basis vector relative to a vertex $\J$ has nonzero components only on vertices $\J' \ge \J$, we leave out the unnecessary columns. Note that the tables for positive degrees were split into two for spacing reasons.

\begin{table}[h!]
\scriptsize
\centering
\begin{tabular}{||c||c c c c c c c c c c c c c||} 
 \hline
 Deg. 0 & 1 & 2 & 3 & 4 & 5 & 6 & 7 & 8 & 9 & 10 & 11 & 12 & 13 \\
 \hline \hline
 $\xi_1$ & 1 & 1 & 1 & 1 & 1 & 1 & 1 & 1 & 1 & 1 & 1 & 1 & 1\\ 
 \hline
\end{tabular}
    \label{tab:basis0}
\end{table}

\begin{table}[h!]
\scriptsize
\centering
\begin{tabular}{||c||c c c c c c||} 
 \hline
 Deg. 1 & 2 & 3 & 4 & 5 & 6 & 7 \\
 \hline \hline
 $\xi_2$ & $2x+2y_1$ & 0 & 0 &  $2x+2y_1$ & $6x+2y_0$ & $2x+2y_1$\\
 \hline
 $\xi_3$ & 0 & $2x+y_0-y_1$ & 0 & $4x+y_0+y_1$ & $2x+y_0-y_1$ & 0 \\
 \hline
 $\xi_4$ & 0 & 0 & $2x-2y_0$ & 0 & 0 & $2x-2y_0$\\
 \hline
\end{tabular}
    \label{tab:basis1a}
\end{table}

\newpage
\newgeometry{left=1.25cm, right=1.25cm}

\begin{table}[h!]
\scriptsize
\centering
\begin{tabular}{||c||c c c c c c||} 
 \hline
 Deg. 1 & 8 & 9 & 10 & 11 & 12 & 13 \\
 \hline \hline
 $\xi_2$ & 0 & 0 & $8x+2y_0+2y_1$ & $2x+2y_1$ & $6x+2y_0$ & 0 \\
 \hline
 $\xi_3$ & $2x+y_0-y_1$ & $4x-y_0-y_1$ & $4x+y_0+y_1$ & 0 & $2x+y_0-y_1$ & $4x-y_0-y_1$ \\
 \hline
 $\xi_4$ & $6x-2y_1$ & $2x-2y_0$ & 0 & $2x-2y_0$ & $6x-2y_1$ & $8x-2y_0-2y_1$\\
 \hline
\end{tabular}
    \label{tab:basis1b}
\end{table}

\begin{table}[h!]
\scriptsize
\centering
\begin{tabular}{||c||c c c c c||} 
\hline
 Deg. 2 & 5 & 6 & 7 & 8 & 9 \\
 \hline \hline
 $\xi_5$ & $(4x+y_0+y_1)(2x+2y_1)$ & 0 & 0 & 0 & 0\\
 \hline
 $\xi_6$ & 0 & $(2x+y_0-y_1)(6x+2y_0)$ & 0 & 0 & 0\\ 
 \hline
 $\xi_7$ & 0 & 0 & $(2x-2y_0)(2x+2y_1)$ & 0 & 0 \\
 \hline
 $\xi_8$ & 0 & 0 & 0 & $(2x+y_0-y_1)(6x-2y_1)$ & 0 \\
 \hline
 $\xi_9$ & 0 & 0 & 0 & 0 & $(4x-y_0-y_1)(2x-2y_0)$ \\
 \hline
\end{tabular}
    \label{tab:basis2a}
\end{table}

\begin{table}[h!]
\scriptsize
\centering
\begin{tabular}{||c||c c c c||} 
\hline
 Deg. 2 & 10 & 11 & 12 & 13 \\
 \hline \hline
 $\xi_5$ & $(4x+y_0+y_1)(2x+2y_1)$ & $(6x-y_0+y_1)(2x+2y_1)$ & 0 & 0\\
 \hline
 $\xi_6$ & $(4x+y_0+y_1)(6x+2y_0)$ & 0 & $(2x+y_0-y_1)(6x+2y_0)$ & 0 \\ 
 \hline
 $\xi_7$ & 0 & $(2x-2y_0)(2x+2y_1)$ & $(6x-2y_1)(6x+2y_0)$ & 0 \\
 \hline
 $\xi_8$ & 0 & 0 & $(2x+y_0-y_1)(6x-2y_1)$ & $(4x-y_0-y_1)(6x-2y_1)$ \\
 \hline
 $\xi_9$ & 0 & $(6x-y_0+y_1)(2x-2y_0)$ & 0 & $(4x-y_0-y_1)(2x-2y_0)$ \\
 \hline
\end{tabular}
    \label{tab:basis2b}
\end{table}

\begin{table}[h!]
\scriptsize
\centering
\begin{tabular}{||c||c c||} 
 \hline
 Deg. 3 & 10 & 11 \\
 \hline \hline
 $\xi_{10}$ & $(6x+2y_0)(4x+y_0+y_1)(2x+2y_1)$ & 0\\ 
 \hline
 $\xi_{11}$ & 0 & $(2x-2y_0)(6x-y_0+y_1)(2x+2y_1)$ \\
 \hline
\end{tabular}
    \label{tab:basis3a}
\end{table}

\begin{table}[h!]
\scriptsize
\centering
\begin{tabular}{||c||c c||} 
 \hline
 Deg. 3 & 12 & 13 \\
 \hline \hline
 $\xi_{12}$ & $(6x-2y_1)(2x+y_0-y_1)(6x+2y_0)$ & 0 \\
 \hline
 $\xi_{13}$ & 0 & $(2x-2y_0)(4x-y_0-y_1)(6x-2y_1)$ \\
 \hline
\end{tabular}
    \label{tab:basis3b}
\end{table}
\restoregeometry
\end{appendices}

\newpage

  \bibliographystyle{alpha}
  \bibliography{books}

\end{document}